\newcommand{\textcyr}[1]{%
{\fontencoding{OT2}\fontfamily{cmr}\fontseries{m}\fontshape{n}\selectfont #1}}
\newcommand{\Sha}{{\mbox{\textcyr{Sh}}}}
\newcommand{\defi}[1]{\textsf{#1}} 
\def\act#1#2%
\newcommand{\Z}{{\mathbb Z}}
\newcommand{\Q}{{\mathbb Q}}
\newcommand{\R}{{\mathbb R}}
\newcommand{\C}{{\mathbb C}}
\newcommand{\F}{{\mathbb F}}
\newcommand{\A}{{\mathbb A}}
\newcommand{\G}{{\mathbb G}}
\newcommand{\Kbar}{{\overline{K}}}
\newcommand{\kbar}{{\overline{k}}}
\newcommand{\Fbar}{{\overline{\F}}}
\newcommand{\calA}{{\mathcal A}}
\newcommand{\calO}{{\mathcal O}}
\newcommand{\calX}{{\mathcal X}}
\DeclareMathOperator{\Exc}{Exc}
\DeclareMathOperator{\Sel}{Sel}
\DeclareMathOperator{\End}{End}
\DeclareMathOperator{\Hom}{Hom}
\DeclareMathOperator{\Gal}{Gal}
\DeclareMathOperator{\Res}{Res}
\DeclareMathOperator{\Br}{Br}
\DeclareMathOperator{\Sym}{Sym}
\DeclareMathOperator{\Pic}{Pic}
\DeclareMathOperator{\HH}{H}
\DeclareMathOperator{\Spec}{Spec}
\DeclareMathOperator{\Aut}{Aut}
\newcommand{\res}{\operatorname{res}}
\def\sep{{\rm sep}}
\newtheorem{Theorem}{Theorem}[section]
\newtheorem{Lemma}[Theorem]{Lemma}
\newtheorem{Proposition}[Theorem]{Proposition}
\newtheorem{Corollary}[Theorem]{Corollary}
\newtheorem{Definition}[Theorem]{Definition}
\newtheorem{Remark}[Theorem]{Remark}
\newtheorem{Conjecture}[Theorem]{Conjecture}
\theoremstyle{definition}
\numberwithin{equation}{section}
\begin{document}
\title{Adelic Mordell-Lang and the Brauer-Manin Obstruction}

\author{Brendan Creutz}
\address{School of Mathematics and Statistics, University of Canterbury, Private
 Bag 4800, Christchurch 8140, new Zealand}
\email{brendan.creutz@canterbury.ac.nz}
\urladdr{http://www.math.canterbury.ac.nz/\~{}b.creutz}


\begin{abstract}
Let $X$ be a closed subvariety of an abelian variety $A$ over a global function field $k$ such that the base change of $A$ to an algebraic closure does not have any positive dimensional isotrivial quotient. We prove that every adelic point on $X$ which is the limit of a sequence of $k$-rational points on $A$ is a limit of $k$-rational points on $X$. Assuming finiteness of the Tate-Shafarevich group of $A$, this implies that the rational points on $X$ are dense in the Brauer set of $X$. Similar results are obtained over totally imaginary number fields, conditionally on an adelic Mordell-Lang conjecture.
\end{abstract}

\maketitle

\section{Introduction}

Let $A$ be an abelian variety over a global field $k$. By the celebrated Mordell-Weil theorem the set $A(k)$ of rational points on $A$ is a finitely generated abelian group. Moreover, the standard arithmetic duality theorems describe, at least conjecturally, how $A(k)$ sits inside the set $A(\A_k)$ of adelic points. The topological closure $\overline{A(k)}$ of the image of $A(k)$ in the space $A(\A_k)_\bullet$ of connected components of $A(\A_k)$ is isomorphic to the profinite completion of $A(k)$ \cite{SerreII,MilneCongruence}. It is conjectured that the Tate-Shafarevich group $\Sha(A)$ of $A$ is finite and, in particular, that its maximal divisible subgroup is trivial. If $\Sha(A)_\textup{div}$ is trivial, then $\overline{A(k)}$ is equal to the set $A(\A_k)^{\Br}_\bullet$ of connected components of $A(\A_k)$ which are orthogonal to the Brauer group of $A$ \cite{Manin,Wang,PoonenVoloch}. We prove the converse of this last statement in Theorem~\ref{thm:notranscendental} below. 

For $X$ a closed subvariety of $A$, the Mordell-Lang conjecture describes $X(k)$ as the set of rational points on a finite union of special subvarieties. This was proved by Faltings \cite{Faltings} in the number field case and by Hrushovski \cite{Hrushovski} in the function field case (See also \cite{Voloch,AV,RosslerML}). However, it is less well understood how $X(k)$ sits inside $X(\A_k)$. The set $X(\A_k)^{\Br}$ of adelic points orthogonal to the Brauer group is closely related to the intersection of $X(\A_k)$ with the closure of the image of $A(k)$ in $A(\A_k)$. In the function field case it is conjectured that the sets $\overline{X(k)}$, $X(\A_k) \cap \overline{A(k)}$ and $X(\A_k)^{\Br}$ are all equal \cite[Conjecture C]{PoonenVoloch}. A similar statement with modifications at the archimedean primes is conjectured in the number field case. These conjectures originate from questions posed independently by Scharaschkin \cite{Scharaschkin} and Skorobogatov \cite[p. 133]{Skorobogatov} for curves over number fields. There is extensive numerical and theoretical evidence for these conjectures, though predominantly in cases where $X(k)$ is finite \cite{Flynn,Poonen,Stoll,BruinStoll,PoonenVoloch,BGW,CreutzWA}.

The main results of this paper establish these conjectures over global function fields, assuming a nonisotriviality hypothesis on the base change $A_\kbar$ of $A$ to an algebraic closure of $k$ and triviality of $\Sha(A)_\textup{div}$. To the best of our knowledge, this gives the first examples of subvarieties of abelian varieties (other than abelian varieties themeselves) with $X(k)$ infinite where these conjectures are known to hold.
\newpage

\subsection{Statement of the main results}

\begin{Theorem}\label{thm:1}
	Let $A$ be an abelian variety over a global function field $k$ such that $A_{\kbar}$ has no nonzero isotrivial quotient. For every closed subvariety $X \subset A$, we have $$\overline{X(k)} = X(\A_k) \cap \overline{A(k)}\,.$$
\end{Theorem}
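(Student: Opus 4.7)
The inclusion $\overline{X(k)} \subseteq X(\A_k) \cap \overline{A(k)}$ is immediate, since $X(k) \subseteq X(\A_k) \cap A(k)$ and the right-hand side $X(\A_k) \cap \overline{A(k)}$ is closed in $A(\A_k)_\bullet$ (because $X$ is closed in $A$). The substance of the theorem therefore lies in the reverse inclusion: given $(P_v) \in X(\A_k) \cap \overline{A(k)}$, the task is to exhibit a sequence in $X(k)$ converging to $(P_v)$ in $A(\A_k)_\bullet$.

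The strategy is to combine the function-field Mordell-Lang theorem with an adelic approximation argument. Under the hypothesis that $A_{\kbar}$ has no nonzero isotrivial quotient, I would invoke Hrushovski's theorem together with the refinements of Voloch, Abramovich--Voloch, and R\"ossler cited in the introduction to produce a finite list of abelian subvarieties $B_1,\dots,B_r \subseteq A$ defined over $k$ and points $a_1,\dots,a_r \in A(k)$ with $a_i + B_i \subseteq X$, such that $X(k) = \bigcup_i (a_i + B_i)(k)$. Taking closures in $A(\A_k)_\bullet$ yields $\overline{X(k)} = \bigcup_i \overline{(a_i + B_i)(k)}$, so it suffices to show that every $(P_v) \in X(\A_k) \cap \overline{A(k)}$ lies in $\overline{(a_i + B_i)(k)}$ for some single index $i$.

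For each fixed $i$, translation by $-a_i$ reduces the closure equality to showing $B(\A_k) \cap \overline{A(k)} = \overline{B(k)}$ inside $A(\A_k)_\bullet$ for an abelian subvariety $B \subseteq A$. This should follow by diagram chasing from the identification $\overline{A(k)} \cong A(k)^\wedge$ of \cite{SerreII, MilneCongruence}: any sequence $Q_n \in A(k)$ with adelic limit in $B(\A_k)$ projects under $\pi \colon A \to A/B$ to a sequence in the finitely generated group $(A/B)(k)$ whose image in the profinite completion $(A/B)(k)^\wedge$ tends to $0$. A nonzero element of a finitely generated abelian group cannot be arbitrarily divisible, so $\pi(Q_n) = 0$ for $n$ large, putting $Q_n \in B(k)$ and hence $(P_v) \in \overline{B(k)}$.

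The main obstacle is the combinatorial step of showing that the given $(P_v) \in X(\A_k) \cap \overline{A(k)}$ actually lies in $(a_i + B_i)(\A_k)$ for some single $i$, rather than having its local components $P_v \in X(k_v)$ scattered across different cosets $a_i + B_i$. A plausible approach is induction on $\dim X$: if $X$ is not already a finite union of such cosets, choose a projection $\pi \colon A \to A/B$ onto a quotient abelian variety, noting that the nonisotriviality hypothesis is preserved under quotients, so that $\pi(X)$ has strictly smaller dimension in $A/B$. The inductive hypothesis applied to $\pi(X) \subseteq A/B$ places $\pi((P_v))$ in $\overline{\pi(X)(k)}$, and Mordell-Lang on $\pi(X)$ cuts this closure into finitely many translates; a pigeonhole/compactness extraction on the approximating sequence then isolates a single coset, which pulled back to $A$ and refined via the classical step yields a single index $i$. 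Executing this reduction rigorously, and in particular verifying that adelic approximation is preserved through both the induction and the pigeonhole extraction, is where I expect the technical heart of the argument to lie.
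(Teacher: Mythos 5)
Your outline defers what you call ``the combinatorial step'' to the end, but that step is essentially the entire theorem, and neither of the inputs you invoke reaches it. Classical Mordell--Lang (Hrushovski/R\"ossler) only describes $X(k)$, so it only rewrites the left-hand side as $\bigcup_i \overline{(a_i+B_i)(k)}$; it says nothing about an adelic point $(P_v)\in X(\A_k)\cap\overline{A(k)}$, whose local components could a priori lie on different cosets at different places, or on cosets of $X$ containing no rational point at all (so not on your list). The paper needs two genuinely different ingredients precisely here: first an \emph{adelic} Mordell--Lang statement (Theorem~\ref{thm:AML1}, proved via R\"ossler's exceptional schemes and Wisson's metric refinement, not via the rational-point statement), which places $X(\A_k)\cap\overline{A(k)}$ inside $Y(\A_k)$ for a finite union of cosets $Y=\bigcup C_i\subset X$; and second Proposition~\ref{lem:infords} on the profinite Selmer group (finitely many nonzero elements of $\Sel(A)$ stay simultaneously nonzero in $A(k_v)$ for a positive density of $v$), proved using Serre's finiteness of $\HH^1(G_\ell,T_\ell(A))$, R\"ossler's $p$-split isogeny result for the $p$-part, and Chebotarev. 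The latter is what passes from $Y(\A_k)$, which is far larger than $\bigcup_i C_i(\A_k)$, to a single coset (Lemma~\ref{lem:Yi}). Your proposed substitute --- induction on $\dim X$ through a quotient $A\to A/B$ plus a pigeonhole extraction on an approximating sequence --- supplies no mechanism to rule out local components scattered across different cosets; the paper explicitly notes that the coset-free arguments of Stoll and Poonen--Voloch do not extend to this situation, and your sketch does not identify which $B$ to project along nor why $\pi(X)$ should drop dimension. So the heart of the proof is missing rather than merely technical.

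There is also a concrete error in the one step you do argue in detail. From $\pi(Q_n)\to 0$ in the profinite completion of the finitely generated group $(A/B)(k)$ you cannot conclude $\pi(Q_n)=0$ for large $n$: convergence to $0$ in the completion does not force the terms to vanish (e.g.\ $n!\,Q\to 0$ for $Q$ of infinite order), and non-divisibility of a fixed nonzero element is irrelevant to a sequence. The statement you are after is the paper's Lemma~\ref{lem:T}, $C(\A_k)\cap\overline{A(k)}=\overline{C(k)}$, but there it must be proved for arbitrary cosets $C=a+A'$ with $a\in A(\kbar)$ (the cosets produced by adelic Mordell--Lang need not contain a rational point), and its proof goes through Poincar\'e reducibility, the identification $\overline{A(k)}\simeq A(k)\otimes\hat{\Z}$, and the zero-dimensional case (Corollary~\ref{cor:zeroD}), which itself already rests on Proposition~\ref{lem:infords}. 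So even the ``classical'' step of your outline secretly requires the Selmer-group input, in addition to needing a corrected divisibility argument.
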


\begin{Theorem}\label{thm:2a}
	Let $A$ be an abelian variety over a global function field $k$ such that $A_{\kbar}$ has no nonzero isotrivial quotient. The following are equivalent.
	\begin{enumerate}
		\item\label{t2:1} $\Sha(A)_\textup{div}=0$,
		\item\label{t2:2} $\overline{A(k)} = A(\A_k)^{\Br}$,
		\item\label{t2:3} For every closed subvariety $X \subset A$ we have $\overline{X(k)} = X(\A_k)^{\Br}$.
	\end{enumerate}
\end{Theorem}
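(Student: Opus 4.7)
The plan is to prove the three-way equivalence by splitting it into $(1) \Leftrightarrow (2)$ and $(2) \Leftrightarrow (3)$. The equivalence $(1) \Leftrightarrow (2)$ is already set up in the introduction: the forward implication $(1) \Rightarrow (2)$ is the classical theorem of Manin, Wang, and Poonen-Voloch cited there, while the converse $(2) \Rightarrow (1)$ is the content of Theorem~\ref{thm:notranscendental}, proved elsewhere in the paper. Neither direction uses the nonisotriviality hypothesis.

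The implication $(3) \Rightarrow (2)$ is immediate on specializing to $X = A$. The substantive new step is $(2) \Rightarrow (3)$, and the strategy is to trap both $\overline{X(k)}$ and $X(\A_k)^{\Br}$ between the same pair of sets. Concretely, for any closed subvariety $X \subseteq A$ I would assemble the chain
\begin{equation*}
\overline{X(k)} \;\subseteq\; X(\A_k)^{\Br} \;\subseteq\; X(\A_k) \cap A(\A_k)^{\Br} \;=\; X(\A_k) \cap \overline{A(k)} \;=\; \overline{X(k)}.
\end{equation*}
The first inclusion is the usual Brauer-Manin obstruction applied to $X$. The second uses that every class $\alpha \in \Br(A)$ pulls back along the closed immersion $X \hookrightarrow A$ to a class in $\Br(X)$, so an adelic point of $X$ annihilating all of $\Br(X)$ in particular annihilates the image of $\Br(A)$. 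The first equality is hypothesis $(2)$, and the second equality is Theorem~\ref{thm:1} applied to $X$ (this is the one step where the nonisotriviality hypothesis enters). Forcing all inclusions to be equalities yields $(3)$.

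I do not expect genuine obstacles here: all the hard work has been packaged into Theorem~\ref{thm:1} and Theorem~\ref{thm:notranscendental}, and the present statement is a formal consequence of these together with the elementary Brauer-Manin inclusion and functoriality of $\Br$. The only point requiring mild care is checking that the topology on $A(\A_k)$ (or the space of connected components $A(\A_k)_\bullet$) used to define $\overline{A(k)}$ matches the topology in which Theorem~\ref{thm:1} asserts its equality; in the function field setting there are no archimedean places, so both are simply the ordinary adelic topology and the bullet decoration is innocuous.
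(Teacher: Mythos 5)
Your argument is correct, and all the inclusions you use do hold in the paper's setting: $\overline{X(k)} \subseteq X(\A_k)^{\Br}$ is the standard Brauer--Manin containment (the Brauer set is closed since each pairing is locally constant), $X(\A_k)^{\Br} \subseteq X(\A_k) \cap A(\A_k)^{\Br}$ is functoriality of the pairing along the closed immersion (the paper itself uses exactly this inclusion in the proof of Theorem~\ref{thm:conditional}), and the two equalities come from hypothesis~\eqref{t2:2} and Theorem~\ref{thm:1}; the equivalence \eqref{t2:1}$\Leftrightarrow$\eqref{t2:2} is Corollary~\ref{cor:notranscendental} (i.e.\ Theorem~\ref{thm:notranscendental}\eqref{it:t3} with $T=A$), and your remark that the $\bullet$/$\circ$ decorations are vacuous over a function field is the right sanity check. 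The only real difference from the paper is organizational: the author states that Theorem~\ref{thm:2a} is a consequence of Theorems~\ref{thm:1} \emph{and}~\ref{thm:2}, and the natural derivation along those lines proves \eqref{t2:1}$\Rightarrow$\eqref{t2:3} via the coset decomposition $X(\A_k)^{\Br} = \bigcup_i C_i(\A_k)^{\Br}$ of Theorem~\ref{thm:2}, which additionally requires observing (via Poincar\'e reducibility and isogeny invariance) that $\Sha(A)_{\textup{div}}=0$ forces $\Sha(A_i)_{\textup{div}}=0$ for every abelian subvariety $A_i \subseteq A$. Your sandwich
$\overline{X(k)} \subseteq X(\A_k)^{\Br} \subseteq X(\A_k) \cap A(\A_k)^{\Br} = X(\A_k) \cap \overline{A(k)} = \overline{X(k)}$
bypasses Theorem~\ref{thm:2} and that heredity argument entirely, getting \eqref{t2:2}$\Rightarrow$\eqref{t2:3} from Theorem~\ref{thm:1} alone; this is a slightly more economical route (it isolates exactly which hard inputs are needed: Theorem~\ref{thm:1} and Theorem~\ref{thm:notranscendental}), at the cost of not exhibiting the finer coset structure of $X(\A_k)^{\Br}$ that Theorem~\ref{thm:2} provides.
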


If one fixes the subvariety $X \subset A$, then a more precise result is possible. A subvariety $C \subset A$ is a \defi{coset} if there is an abelian subvariety $A' \subset A$ and a point $a \in A(\kbar)$ such that $C_{\kbar}$ is the translate of $A'_{\kbar}$ by the point $a \in A(\kbar)$. In this case we simply write $C = a + A'$. We emphasize that $A'$ and $C$ are defined over $k$, though $a$ need not be a $k$-rational point. A coset $C = a + A'$ is a torsor under the abelian variety $A'$ over $k$. Hence, a coset $C = a + A'$ with $C(\A_k)$ nonempty represents an element of $\Sha(A')$. Assuming $A_\kbar$ has no positive dimensional isotrivial quotient, the special subvarieties featuring in the Mordell-Lang conjecture mentioned above are cosets contained in $X$. The following gives an analogue of Mordell-Lang for the Brauer set.

\begin{Theorem}\label{thm:2}
	Let $A$ be an abelian variety over a global function field such that $A_{\kbar}$ has no nonzero isotrivial quotient. Let $X \subset A$ be a closed subvariety. There is a finite collection of cosets $C_i = a_i + A_i \subset X$, $i = 1,\dots,r$, such that 
	\[
		X(\A_k)^{\Br} = \bigcup_{i = 1}^r (C_i(\A_k)^{\Br})\,.
	\]
	Moreover, $\overline{X(k)} = X(\A_k)^{\Br}$ if and only if $\Sha(A_i)_\textup{div} = 0$ for all $i = 1,\dots,r$.
\end{Theorem}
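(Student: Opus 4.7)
The plan is to extract the cosets $C_i$ via a refinement of Mordell-Lang, and then to deduce both halves of the theorem by applying Theorems~\ref{thm:1} and~\ref{thm:2a} to each abelian subvariety $A_i$. First, Hrushovski's Mordell-Lang theorem in the function field case (applicable since $A_\kbar$ has no nonzero isotrivial quotient), combined with the geometric finiteness of maximal cosets contained in $X_\kbar$, produces a finite collection of cosets $C_i = a_i + A_i \subseteq X$, with $A_i \subseteq A$ a $k$-defined abelian subvariety and $a_i \in A(\kbar)$. By Poincar\'e complete reducibility each $A_{i,\kbar}$ is, up to isogeny, a direct factor of $A_\kbar$, so it inherits the no-isotrivial-quotient property, and Theorems~\ref{thm:1} and~\ref{thm:2a} apply to each $A_i$ in its own right.

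For the structural decomposition $X(\A_k)^{\Br} = \bigcup_i C_i(\A_k)^{\Br}$, the inclusion $\supseteq$ is immediate from functoriality of the Brauer-Manin pairing under $C_i \hookrightarrow X$. For the reverse, given $(P_v) \in X(\A_k)^{\Br}$, I would show that $(P_v)$ lies inside $C_i(\A_k)$ for a single index $i$. Since $(P_v)$ is orthogonal to the image of $\Br A$ in $\Br X$, it lies in $A(\A_k)^{\Br}$; applying a quotient map $\pi_i : A \to A/A_i$, the image $\pi_i((P_v)) \in (A/A_i)(\A_k)^{\Br}$. Invoking Theorem~\ref{thm:1} on $\pi_i(X) \subseteq A/A_i$ (using the inherited nonisotriviality of the quotient) forces $\pi_i((P_v))$ to coincide with one of the finitely many $k$-rational images $\bar a_i = \pi_i(a_i) \in (A/A_i)(k)$, and a uniform descent across places then yields a single index $i$ with $(P_v) \in C_i(\A_k)$. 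Orthogonality to $\Br X$ restricts to orthogonality against $\Br C_i$, giving $(P_v) \in C_i(\A_k)^{\Br}$.

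For the moreover statement: $(\Leftarrow)$ if $\Sha(A_i)_\textup{div} = 0$ for all $i$, then standard duality for torsors implies $C_i(\A_k)^{\Br} \neq \emptyset$ iff $[C_i] \in \Sha(A_i)_\textup{div} = 0$ iff $C_i(k) \neq \emptyset$; in this case a rational point identifies $C_i \cong A_i$, and Theorem~\ref{thm:2a} gives $\overline{C_i(k)} = C_i(\A_k)^{\Br}$. Hence $X(\A_k)^{\Br} = \bigcup_i \overline{C_i(k)} = \overline{X(k)}$ by Mordell-Lang. $(\Rightarrow)$ If $\Sha(A_j)_\textup{div} \neq 0$ for some $j$, Theorem~\ref{thm:2a} furnishes $(Q_v) \in A_j(\A_k)^{\Br} \setminus \overline{A_j(k)}$; translating into $C_j$ (after picking a rational point when $C_j(k) \neq \emptyset$, or working with a Brauer-Manin representative on $C_j$ directly otherwise), and arranging to avoid the proper intersections $C_j \cap C_{i_0}$ for $i_0 \neq j$, produces an element of $X(\A_k)^{\Br} \setminus \overline{X(k)}$.

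The main obstacle is the reverse inclusion in the structural decomposition: showing that Brauer-Manin orthogonality pins an adelic point of $X$ to a single coset, rather than allowing its local components to scatter across distinct cosets at different places. This is an adelic strengthening of Mordell-Lang, and its proof will demand a careful combination of Theorem~\ref{thm:1} applied to the finite family of quotient abelian varieties $A/A_i$ together with Brauer classes pulled back from these quotients to enforce the uniform coset choice.
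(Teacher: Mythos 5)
Your overall shape (cosets from Mordell--Lang, torsor duality on each coset, \v{S}afarevi\v{c}--Tate divisibility for the ``moreover'') matches the paper, but the two steps you yourself flag as the crux are exactly where the argument as sketched fails. First, classical Mordell--Lang (Hrushovski) only controls $X(k)$, not adelic points orthogonal to the Brauer group, and your replacement mechanism does not work: for $(P_v)\in X(\A_k)^{\Br}$ the image $\pi_i((P_v))$ lies in $(A/A_i)(\A_k)^{\Br}$, which is strictly larger than $\overline{(A/A_i)(k)}$ precisely when $\Sha(A/A_i)_{\textup{div}}\neq 0$ --- a case the theorem must cover, since no finiteness of $\Sha$ is assumed --- so Theorem~\ref{thm:1} does not apply to it; and even if it did, $\pi_i(X)$ can be all of $A/A_i$, so nothing forces the image to be one of finitely many rational points. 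The paper instead needs two genuinely different inputs: the adelic Mordell--Lang theorem (Theorem~\ref{thm:AML}, proved via R\"ossler's exceptional schemes and Wisson's theorem), which only yields $X(\A_k)\cap A(\A_k)^{\Br}\subset Y(\A_k)$ for a finite union of cosets $Y=\bigcup C_i$; and then the passage from $Y(\A_k)$ (where local components may scatter over different cosets) to a single $C_i(\A_k)$, which is Lemma~\ref{lem:Yi} and rests on Proposition~\ref{lem:infords} (a simultaneous nonvanishing statement for finitely many nonzero elements of $\Sel(A)$ at a positive density of places, via Serre's image-of-Galois result plus the $p$-part analysis) together with Corollary~\ref{cor:notranscendental} identifying $A(\A_k)^{\Br}_\bullet$ with $\Sel(A)$. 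Your closing remark that ``Brauer classes pulled back from the quotients'' will enforce the uniform coset choice is not a proof and there is no evident way to make it one without this Selmer-theoretic input.

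Second, the step ``orthogonality to $\Br X$ restricts to orthogonality against $\Br C_i$'' is unjustified: restriction only gives orthogonality to the image of $\Br X$ (or of $\Br A$) in $\Br C_i$, and that map need not be surjective. The paper proves $C_i(\A_k)\cap A(\A_k)^{\Br}=C_i(\A_k)^{\Br}$ (Lemma~\ref{lem:BrSel}) by first reducing $\Br$ to $\Br_{1/2}$ via Theorem~\ref{thm:notranscendental}(1) --- which in the function field case is new and requires the Skorobogatov--Yang results on $p$-primary torsion plus descent theory redone with fppf cohomology --- and then using surjectivity of $\HH^1(k,A^\vee)\to\HH^1(k,A_i^\vee)$, valid because $k$ has no real places. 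Relatedly, what you call ``standard duality for torsors'' ($C_i(\A_k)^{\Br}\neq\emptyset$ iff $[C_i]\in\Sha(A_i)_{\textup{div}}$, and density iff $\Sha(A_i)_{\textup{div}}=0$) is Theorem~\ref{thm:notranscendental}(2)--(3), one of the paper's main new results over function fields, not a citation. Finally, your converse direction is also incomplete: an adelic point of $C_j(\A_k)^{\Br}\setminus\overline{C_j(k)}$ could a priori be approximated by rational points lying on other cosets inside $X$, and ``arranging to avoid the proper intersections'' is not something you can impose on a given Brauer--Manin class without argument; the paper handles this by intersecting $C_j$ with the coset carrying the approximating rational points, applying Lemma~\ref{lem:Y'} and the already-proved first statement to that intersection, and only then invoking Theorem~\ref{thm:notranscendental}(3).
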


If $X$ is \defi{coset free}, i.e., $X$ does not contain any coset of positive dimension, then the cosets $C_i$ appearing in Theorem~\ref{thm:2} are $k$-rational points and the theorem gives $X(k) = X(\A_k)^{\Br}$. Theorems~\ref{thm:1} and~\ref{thm:2} were proved for $X_{\kbar}$ coset free (which implies that $X$ is coset free, but not conversely) in \cite[Theorems B and D]{PoonenVoloch} under an additional hypothesis on the $p$-primary torsion of $A$. Theorem~\ref{thm:2} is proved for $X$ a nonisotrivial curve of genus at least $2$ (necessarily coset free) without the additional hypothesis on the $p$-primary torsion or the requirement that $A_\kbar$ has no positive dimensional isotrivial quotient in~\cite{CV3}. Partial results in the case of isotrivial curves may be found in~\cite{CV,CPV}.

In all of the results mentioned in the previous paragraph, the hypotheses imply that $X(L)$ is finite for every finite extension $L/k$, while there is no such requirement in the theorems stated above. An example of an $X$ covered by Theorem~\ref{thm:2} to which none of these previous results apply is given by the symmetric square of a nonhyperelliptic curve $Y$ of genus $3$ which admits a degree $2$ map to a nonisotrivial elliptic curve $E$. Then $X = \Sym^{2}(Y)$ contains a coset isomorphic to $E$. Suppose the Jacobian $A$ of $X$ splits up to isogeny as $A_\kbar \sim E_\kbar \times B$ for a simple nonisotrivial abelian surface $B/\kbar$. Theorem~\ref{thm:1} gives $\overline{X(k)} = X(\A_k) \cap \overline{A(k)}$. If, in addition, $\Sha(E)_{\textup{div}} = 0$, then Theorem~\ref{thm:2} gives $\overline{X(k)}= X(\A_k)^{\Br}$. Note that $X(k)$ is infinite if $E$ has positive rank.

\subsection{Adelic Mordell-Lang}
A key input to the proofs of the theorems above are the `exceptional schemes' appearing in R\"ossler's proof of Mordell-Lang over function fields~\cite{RosslerML}. More recently, these were used by Wisson~\cite{Wisson} to prove a continuous version of Mordell-Lang in the function field case. A similar result was previously obtained by Hrushovski in \cite{Hrushovski} using model theoretic methods. From Wisson's result one can fairly easily deduce the following theorem (See Section~\ref{sec:AML}). 

\begin{Theorem}\label{thm:AML1}
	Let $X \subset A$ be a closed subvariety of an abelian variety over a global function field $k$. Assume $A_{\kbar}$ has no positive dimensional isotrivial quotient. Then there exists a finite union of cosets $Y = \bigcup C_i$ contained in $X$ such that $X(\A_k) \cap \overline{A(k)} \subset Y(\A_k)$.
\end{Theorem}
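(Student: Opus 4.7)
The plan is to deduce Theorem~\ref{thm:AML1} as a formal consequence of Wisson's continuous Mordell--Lang theorem~\cite{Wisson}, which under the non-isotriviality hypothesis produces a finite collection of cosets $C_1,\ldots,C_r$ contained in $X$ absorbing all $v$-adic limits of $A(k)$ that land in $X(k_v)$. Let $Y = \bigcup_{i=1}^{r} C_i$ denote this finite union of cosets, which we may arrange to be defined intrinsically from $X$ and $A$ (via R\"ossler's exceptional subschemes) and therefore independent of the place.

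Given this, take an arbitrary $x = (x_v) \in X(\A_k) \cap \overline{A(k)}$. Since $k$ is a global function field, every place is non-archimedean, so each $A(k_v)$ is totally disconnected and $A(\A_k)_\bullet = A(\A_k)$; hence $\overline{A(k)}$ is the ordinary topological closure. Choose a net $(P_n) \subset A(k)$ with $P_n \to x$ in $A(\A_k)$, so that projection to the $v$-th factor yields $P_n \to x_v$ in $A(k_v)$ for every place $v$. Applying Wisson's theorem at each place gives $x_v \in Y(k_v)$ for all $v$. Since $Y$ is a closed subscheme of $A$ and $A$ is proper, a point of $A(\A_k)$ lies in $Y(\A_k)$ precisely when each component lies in $Y(k_v)$, so $x \in Y(\A_k)$ as desired.

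The only real obstacle is bookkeeping: one must confirm that the collection of cosets produced by Wisson (possibly stated place-by-place) can indeed be chosen uniformly across all places, and that the notion of convergence matches between $A(\A_k)$, $A(\A_k)_\bullet$, and the individual $A(k_v)$. The function field hypothesis trivializes the second issue, and the first is handled by the fact that R\"ossler's exceptional subscheme construction used by Wisson depends only on $X \subset A$ and not on any local data. Once these points are in place, the theorem follows without further work.
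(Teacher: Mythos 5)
Your second paragraph (the place-by-place limit argument using Wisson's metric inequality, plus closedness of $Y$ in the proper variety $A$) is fine and is essentially how the paper uses Wisson's result. The gap is in the first and third paragraphs, where you assert that the union of cosets can be taken to be a subvariety of $X$ over $k$, "defined intrinsically from $X$ and $A$ (via R\"ossler's exceptional subschemes)". That is not what Wisson's theorem gives, and the obstruction is not uniformity across places: R\"ossler's exceptional schemes, and hence Wisson's subvariety $W \subset X_{k'}$, are constructed over $k' = k \otimes_{\F} \overline{\F}$ (over $U' = U \times_{\F}\overline{\F}$), so $W$ is a priori only defined over a finite Galois extension $L/k$, and its irreducible components --- the actual cosets --- need not be defined over $k$ at all. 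Taking the union of Galois conjugates does produce a $k$-variety, but its $k$-components are generally not geometrically irreducible, hence not cosets in the sense required by the statement; so there is no formal way to convert $W$ into the required $Y \subset X$ over $k$.

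Closing this gap is the substantive content that your proposal labels "bookkeeping". In the paper it is done by Lemma~\ref{lem:Y'}: one intersects Galois conjugates of the cosets and keeps only the geometrically irreducible components defined over $k$, and then one must show that any adelic point of $Y(\A_L)$ lying in $\overline{A(k)}$ (equivalently in $A(\A_k)^{\Br}$) is supported on a \emph{single} geometric component, so that Galois invariance of the adelic point ($A(\A_L)^{\Gal(L/k)} = A(\A_k)$) forces it onto one of these $k$-rational cosets. That step rests on Lemma~\ref{lem:Yi}, which in turn requires the key Proposition~\ref{lem:infords} about the profinite Selmer group (a Chebotarev/image-of-Galois argument, with extra work at $p$ in the function field case). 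None of this appears in your proposal, and without it your argument only shows that each local component $x_v$ lies on some coset defined over $L$, which does not yield a finite union of cosets over $k$ contained in $X$ as the theorem demands.
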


With a bit more effort, we deduce a stronger version of this theorem in which $\overline{A(k)}$ is replaced by the Brauer set $A(\A_k)^{\Br}$ (See Theorem~\ref{thm:AML}). This is a version of the `adelic Mordell-Lang conjecture' first stated in \cite{Stoll} over number fields for X coset free (See Conjecture~\ref{conj:AML}). This was proved over function fields for $X$ coset free in \cite{PoonenVoloch}. Our proof of Theorem~\ref{thm:AML} follows an approach similar to that taken in~\cite{PoonenVoloch}, but replaces the model theoretic input of~\cite{Hrushovski} there with results of \cite{PinkRossler,RosslerML,Wisson}. The conjecture remains wide open over number fields.

\subsection{Outline}

In Sections~\ref{sec:Sel} --~\ref{sec:cosets} we establish results that will allow us to deduce Theorem~\ref{thm:1} from Theorem~\ref{thm:AML1} and to deduce Theorem~\ref{thm:2} from the more general adelic Mordell-Lang conjecture just mentioned. The results in these sections also hold over number fields, at some points assuming the field is totally imaginary. 

One of the challenges to address is that the set $Y(\A_k) = (\bigcup C_i)(\A_k)$ of adelic points on the union featuring in Theorem~\ref{thm:AML1} is much larger than the union $\bigcup( C_i(\A_k))$ of the sets of adelic points on the components (the latter features in Theorem~\ref{thm:2}). One must show that any point in $Y(\A_k) \cap \overline{A(k)}$ is supported on one of the irreducible components. This was done for $X$ coset free (in which case $Y$ is finite) in \cite{Stoll,PoonenVoloch}, but it does not seem possible to deduce the general case from this. The key result we use is Proposition~\ref{lem:infords} below which generalizes \cite[Proposition 3.7]{Stoll} and \cite[Proposition 5.2]{PoonenVoloch}. This relies on a result of Serre in \cite{SerreII} concerning the image of Galois acting on torsion points of abelian varieties, and a number of additional results to handle the $p$-part in the function field case. 

Proposition~\ref{lem:infords} is a statement about the profinite Selmer group $\Sel(A)$. In Section~\ref{sec:Br} we relate this to the Brauer set. It was previously known that $A(\A_k)^{\Br}_\bullet \subset \Sel(A)$ \cite{Skorobogatov,Stoll,PoonenVoloch}, with equality in the number field case by~\cite{CreutzBCyr}. Using recent results of Skorobogatov~\cite{Skorob} and Yang~\cite{Yang}, we prove this equality in the function field case (See Theorem~\ref{thm:notranscendental}). This allows us to prove the converse statements in Theorems~\ref{thm:2a} and~\ref{thm:2}, that the equality $\overline{X(k)} = X(\A_k)^{\Br}$ implies a finiteness result for the Tate-Shafarevich group.

The application of Proposition~\ref{lem:infords} to adelic points on finite unions of cosets is given in Section~\ref{sec:cosets}. In this section we assume that $k$ has no real primes. This restriction should not be necessary, but there are issues at real primes which would require additional arguments. See Section~\ref{sec:reals}.

We prove the adelic Mordell-Lang conjecture over function fields in Section~\ref{sec:AML}. The theorems stated above are then proved in Section~\ref{sec:proofs}, where we also prove their analogues over totally imaginary number fields conditionally on the adelic Mordell-Lang conjecture.


\subsection*{Acknowledgements}
	The author would like to thank Felipe Voloch for a number of helpful conversations, Alexei Skorobogatov for suggesting the use of~\cite{Skorob,Yang} in the proof of Theorem~\ref{thm:notranscendental}, and James Milne for providing references used in the proof of Lemma~\ref{lem:Serre}. This research was partially supported by Royal Society | Te Apārangi. 

\section{Notation}

Throughout the paper $k$ is a global field, i.e., a number field or the function field of a geometrically integral curve over a finite field. We fix an algebraic closure $\kbar$ of $k$ and the separable closure $k^\sep$ of $k$ inside $\kbar$. The set of primes of $k$ is denoted by $\Omega_k$.The adele ring $\A_k$ is a topological ring defined as the restricted product $\prod_{v \in \Omega_k}(k_v,\calO_v)$, where $\calO_v$ is the ring of integers of the completion $k_v$ of $k$ at the prime $v$. 

A variety $Y$ over $k$ is a separated scheme of finite type over $k$. The set $Y(\A_k)$ is endowed with a topology from $\A_k$. When $Y$ is proper, we have that $Y(\A_k) = \prod_{v \in \Omega_k}Y(k_v)$ with the product topology of the $v$-adic topologies on $Y(k_v)$. 

For a topological space $T$ we use $T_\bullet$ to denote the set of connected components of $T$ with the induced topology. For a nonarchimedean prime $v \in \Omega$ we have $Y(k_v)_\bullet = Y(k_v)$. Thus, for $Y$ a proper variety over $k$ we have  $Y(\A_k)_\bullet = \prod_{v \in \Omega_k}Y(k_v)_\bullet = \prod_{v \nmid \infty}Y(k_v) \times \prod_{v \mid \infty}Y(k_v)_\bullet$. Note that when $k$ is a global function field we have $Y(\A_k)_\bullet = Y(\A_k)$.

\section{Selmer groups}\label{sec:Sel}
Throughout this section $A$ is an abelian variety over a global field $k$.

For any isogeny $f \in \End(A)$, the fppf cohomology of the exact sequence
\[
	0 \to A[f] \to A \stackrel{f} \to A \to 0
\]
gives rise to the following commutative diagram with exact rows.
\[\displaystyle
	\xymatrix{
	0 \ar[r]& A(k)/f(A(k)) \ar[r]\ar[d] & \HH^1(k,A[f]) \ar[d]\ar[r] \ar[dr]^{\alpha_{f}}& \HH^1(k,A)[f] \ar[r]\ar[d] & 0\\
	0 \ar[r]& \prod_{v \in \Omega_k} A(k_v)/f(A(k_v)) \ar[r]& \prod_{v \in \Omega_k} \HH^1(k_v,A[f]) \ar[r] & \prod_{v \in \Omega_k} \HH^1(k_v,A)[f] \ar[r] & 0
	}
\]
We define
\begin{align*}
	\Sel^{f} &:= \ker(\alpha_{f})\, \\
	\Sel^{(f)} &:= \varprojlim_i \Sel^{f^i}\,,\text{ and}\\
	\Sel &:= \varprojlim_n \Sel^{n}\,,
\end{align*}
where the limits are with respect to the maps induced by $f^j: A[f^{i+j}] \to A[f^i]$ and by $m : A[mn] \to A[n]$ for integers $i,j,m,n \ge 1$. We will also write $\Sel(A)$, $\Sel^{n}(A)$, etc. if the abelian variety $A$ is not clear from the context. 

For any isogeny $f \in \End(A)$ and prime $v$ of $k$, exactness of the diagram above gives a map $\Sel^f \to A(k_v)/f(A(k_v))$. Passing to the limit one has $\Sel^{(f)} \to A(k_v)^{(f)} := \varprojlim A(k_v)/f^i(A(k_v))$. Similarly, there is a map $$\Sel \to \varprojlim_n \prod_{v\in \Omega_k} A(k_v)/nA(k_v) = \prod_{v \in \Omega_k} A(k_v)_{\bullet} = A(\A_k)_{\bullet}\,.$$

The goal of this section is to prove the following proposition.

	\begin{Proposition}\label{lem:infords}
		Suppose $P_1,\dots,P_r \in {\Sel}$ are all nonzero. Then there exists a positive density sets of primes $S \subset \Omega_k$ such that for all $v \in S$ and all $j = 1,\dots,r$, the image of $P_j$ under ${\Sel} \to A(\A_k)_\bullet \to A(k_v)_\bullet$ is nonzero.
	\end{Proposition}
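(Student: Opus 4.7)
The plan is to reduce to a single finite torsion level and apply Chebotarev density on a suitable Galois extension, invoking Serre's theorem on the image of the absolute Galois group in $\Aut(TA)$ to guarantee that sufficiently many Frobenius conjugacy classes avoid the ``bad'' loci for all $P_j$ simultaneously.

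First, for the reduction to a finite level: since $\Sel = \varprojlim_n \Sel^n$ and each $P_j$ is nonzero, there exists an integer $n \geq 1$ such that for every $j$ the image $c_j \in \Sel^n \subset \HH^1(k,A[n])$ of $P_j$ is nonzero (take $n$ to be the $\lcm$ of the individual levels). The factorisation
\[
	\Sel \to \Sel^n \to A(k_v)/nA(k_v) \hookrightarrow \HH^1(k_v,A[n])
\]
reduces the problem to producing a positive density set of $v$ at which every $c_j|_v \in \HH^1(k_v,A[n])$ is nonzero.

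For the Chebotarev setup, let $L = k(A[n])$ and let $M/L$ be the finite abelian extension trivialising each restriction $c_j|_{G_L} \in \Hom(G_L, A[n])$, so that $\Gal(M/L)$ injects into $A[n]^r$ via $\sigma \mapsto (c_1(\sigma),\dots,c_r(\sigma))$. For $v$ unramified in $M$, inflation from the cyclic decomposition group gives that $c_j|_v = 0$ iff $c_j(\operatorname{Frob}_v) \in (\operatorname{Frob}_v-1)A[n]$, so the problem becomes to find a positive density of Frobenius classes $\sigma \in \Gal(M/k)$ satisfying $c_j(\sigma) \notin (\sigma-1)A[n]$ for every $j$ at once. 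Restricting to $v$ split in $L$ collapses the condition to $c_j(\operatorname{Frob}_v) \neq 0$ in $A[n]$, but this does not suffice in general: the image of $\Gal(M/L)$ in $A[n]^r$ can fail to contain any tuple with every coordinate nonzero, even when each individual projection is nonzero.

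The key input is Serre's theorem~\cite{SerreII}: the image of $\Gal(\kbar/k)$ in $\Aut(TA) = \prod_\ell \Aut(T_\ell A)$ contains an open subgroup of the homotheties. Pulled back to $\Gal(M/k)$, this produces elements $\sigma$ acting on $A[n]$ as any prescribed scalar $\lambda$ in an appropriate congruence class, while the tuples of cocycle values $(c_j(\sigma))_j$ are free to range through entire cosets. Selecting $\lambda$ so that $(\lambda-1)A[n]$ is a proper subgroup of $A[n]$ of small index, this additional flexibility lets us find $\sigma$ with $c_j(\sigma) \notin (\lambda-1)A[n]$ for every $j$, after which Chebotarev density on the nonempty union of resulting conjugacy classes yields the required positive density of primes. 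The main obstacle is the $p$-primary part when $p = \Char(k) > 0$, where Serre's theorem does not directly control the image on $T_p A$; this is handled by combining it with further results on the $p$-adic Galois representations of abelian varieties over function fields, notably those of~\cite{PinkRossler}, as indicated in the introduction.
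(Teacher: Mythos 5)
Your reduction to a single finite level and the Chebotarev framework are reasonable starting points, but the proposal has a fatal gap: its key input, Serre's theorem that the image of Galois in $\Aut(T A)$ contains an open subgroup of the homotheties, is simply not available over global function fields. The paper points this out explicitly (citing Zarhin's examples of abelian varieties without homotheties), and the proposition is needed precisely in the function field case. The paper's proof deliberately uses only the weaker statement that $\HH^1(G_\ell, T_\ell(A))$ is finite (from Serre's congruence subgroup paper), extended to $\ell = p$ by crystalline/Dieudonn\'e-module arguments (Manin--Demazure, Bloch--Illusie). Relatedly, your one-sentence disposal of the $p$-primary part by citing Pink--R\"ossler is not an argument: that paper proves Manin--Mumford and says nothing about the relevant local--global statements. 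The actual treatment requires R\"ossler's result that $A$ becomes isogenous to a $p$-split abelian variety over a finite separable extension, the splitting $\Sel^{(p)} \simeq \Sel^{(p_c)} \times \Sel^{(p_e)}$, and the local injectivity of $\Sel^{(p_c)} \to A(k_v)^{(p_c)}$ -- the connected part has infinitesimal kernel, so no Chebotarev-type argument on torsion fields can ever see it, and it must be removed before any counting begins. You also never separate off torsion classes, which need (and get, in the paper) a different, elementary argument.

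Even setting aside characteristic $p$, the simultaneity problem you correctly identify ("the image in $A[n]^r$ can fail to contain any tuple with every coordinate nonzero") is not actually resolved by your homothety trick: the tuples of cocycle values do not range freely over cosets but over cosets of the subgroups $c_j(\Gal(M/L))$, and avoiding $r$ proper subgroups simultaneously is exactly the original difficulty restated. The paper's resolution is different and quantitative: after multiplying by $|A(k)_{\mathrm{tors}}|$ and choosing an \'etale isogeny $m$, it embeds $\Sel^{(m)} \hookrightarrow \HH^1(k, T_m(A))$ and uses the finiteness of the kernel of restriction to $k(A[m^\infty])$ to pass to a level $m^N$ at which each class, viewed as a homomorphism $\Gal(k(A[m^N])) \to A[m^N]$, has order greater than $r$; then each class vanishes on Frobenius for a set of primes of density less than $1/r$, and the union of the $r$ bad sets has density less than $1$. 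Your fixed-level reduction discards exactly this leverage, so as written the proof does not go through for $r \ge 2$ even over number fields, and its main external input fails over function fields.
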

	
	In the case $r=1$, this recovers the result that the map $\Sel \to A(\A_k)_\bullet$ is injective, which is part of the Cassels dual exact sequence. See \cite[Proposition 4.3 and Remark 4.4]{PoonenVoloch} for a history of this result. In the number field case with $r=1$, this is also proved in~\cite[Proposition 3.7]{Stoll}. Our proof makes use of some of the ideas there.

\begin{Remark}
	Here is an example to show that a nonzero $P \in \Sel$ can have trivial image in $A(k_v)_\bullet$ for all $v$ in a set of density arbitrarily close to $1$. Let $n!_\ell := n!/\ell^{v_\ell(n!)}$ denote the prime to $\ell$ part of $n$ factorial. Suppose $Q \in A(k) \subset \Sel$ is a point of infinite order on an elliptic curve over a number field without complex multiplication. Consider the sequence $n!_\ell Q$, $n \in \mathbb{N}$. Since $\Sel$ is compact, we can extract a convergent subsequence converging to, say, $P \in \Sel$. If $v$ is a prime of good reduction with residue field $\F_v$ of characteristic prime to $\ell$ and such that the reduction of $Q$ has order prime to $\ell$, then the image of $P$ in $A(k_v)$ is $0$. By Chebotarev's theorem, the density of the set of primes such that $A(\F_v)$ contains a point of order $\ell$ is $O(1/\ell)$. On the other hand, the image of $P$ in $A(k_v)$ is nonzero at primes of residue characteristic $\ell$ and at primes where the reduction of $P$ has order divisible by $\ell$. So $P$ is not zero in $A(k_v)$ for all primes.
\end{Remark}

\subsection{The torsion subgroup}

For an abelian group $G$, let $G_\textup{tors}$ be its torsion subgroup.

\begin{Lemma}\label{lem:G}
	Let $f$ be an endomorphism of a finitely generated abelian group $G$ with finite kernel. The canonical map $G \to G^{(f)} = \varprojlim_i G/f^i$ induces an isomorphism $G[f^\infty] \simeq (G^{(f)})_\textup{tors}$
\end{Lemma}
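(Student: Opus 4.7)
The plan is to reduce to the Fitting decomposition of $f$ acting on the finite group $G_\textup{tors}$, after controlling how $f$-divisibility interacts with the torsion-free quotient. Let $H = G_\textup{tors}$ and $F = G/H$, so that $F \cong \Z^r$ is free of finite rank and we have a short exact sequence $0 \to H \to G \to F \to 0$ compatible with $f$. Since $\ker(f)$ is finite and $F$ is torsion-free, the induced endomorphism $f_F$ of $F$ is injective, and after tensoring with $\Q$ it becomes an isomorphism; in particular $d := \det(f_F) \ne 0$ and $dF \subset f_F(F)$.

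First I would show that $H \cap f^i(G) = f^i(H)$ for every $i \ge 0$. If $y = f^i(x) \in H$, then the image $\bar x \in F$ satisfies $f_F^i(\bar x) = 0$, so $\bar x = 0$ by injectivity of $f_F$, whence $x \in H$. This identification gives, for each $i$, an exact sequence
\[
0 \to H/f^i(H) \to G/f^i(G) \to F/f_F^i(F) \to 0.
\]
Since the groups $H/f^i(H)$ are finite, the Mittag-Leffler condition is satisfied and passing to the inverse limit produces an exact sequence
\[
0 \to \varprojlim_i H/f^i(H) \to G^{(f)} \to F^{(f)} \to 0.
\]
As $H$ is finite, the descending chain $f^i(H)$ stabilizes at a subgroup $f^\infty(H) \subset H$, so $\varprojlim_i H/f^i(H) = H/f^\infty(H)$. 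On the other hand, because $dF \subset f_F(F)$, the $f_F$-adic completion $F^{(f)}$ agrees with the $d$-adic completion of $F$, hence is isomorphic to $F \otimes_\Z \Z_d \cong \Z_d^r$, which is torsion-free.

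Combining these observations, the torsion subgroup of $G^{(f)}$ is exactly $H/f^\infty(H)$. Now I would apply the Fitting decomposition for the endomorphism $f|_H$ of the finite abelian group $H$: for $N$ sufficiently large, $H = \ker(f^N|_H) \oplus f^N(H) = H[f^\infty] \oplus f^\infty(H)$. The projection onto the first summand identifies $H/f^\infty(H)$ with $H[f^\infty]$. Finally, $G[f^\infty] \subset G_\textup{tors} = H$, so $G[f^\infty] = H[f^\infty]$, and chasing through the construction one sees that the composition $G[f^\infty] = H[f^\infty] \hookrightarrow H \twoheadrightarrow H/f^\infty(H) \hookrightarrow G^{(f)}$ is precisely the map induced by the canonical morphism $G \to G^{(f)}$. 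The only part requiring care is the diagram chase establishing the exact sequence for $G^{(f)}$ and the identification $H \cap f^i(G) = f^i(H)$; once those are in place, the conclusion is immediate from Fitting.
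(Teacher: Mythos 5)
Your overall strategy is the same as the paper's: establish the exact sequence $0 \to H/f^i(H) \to G/f^i(G) \to F/f_F^i(F) \to 0$ (the paper obtains it from the snake lemma, you from the identity $H \cap f^i(G) = f^i(H)$, which is correct), pass to the inverse limit, identify $\varprojlim_i H/f^i(H) = H/f^\infty(H)$ with $H[f^\infty] = G[f^\infty]$ via the Fitting decomposition (the paper phrases this as: for large $i$, $f^i$ kills $G[f^\infty]$ and is an automorphism of $G_\textup{tors}/G[f^\infty]$), and use that the completion of the torsion-free quotient is torsion-free. All of these steps are fine as you state them, except the last one, where your justification contains a genuine error.

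You claim that since $dF \subset f_F(F)$ with $d = \det(f_F) \ne 0$, the $f_F$-adic completion of $F$ coincides with the $d$-adic completion, so that $F^{(f)} \cong F \otimes_\Z \Z_d$ is free of rank $r$ and in particular torsion-free. The inclusion $dF \subset f_F(F)$ only yields $d^iF \subset f_F^i(F)$, i.e.\ one half of the cofinality needed for the two completions to agree; the other half, that each $d^iF$ contains some $f_F^j(F)$, fails in general. For instance, take $F = \Z^2$ and $f_F = \mathrm{diag}(1,2)$, so $d = 2$: then $f_F^j(F) = \Z \oplus 2^j\Z$ is never contained in $2\Z^2$, and $F/f_F^j(F) \cong \Z/2^j$, so $F^{(f)} \cong \Z_2$ has rank $1$, not $2$. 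Hence the asserted isomorphism $F^{(f)} \cong \Z_d^r$ is false. The conclusion you actually need, that $F^{(f)}$ is torsion-free, is still true, but it requires a different argument: the paper applies Smith normal form to each endomorphism $f^i$ and concludes that for every prime $\ell$ the $\ell$-part of $F^{(f)}$ is a product of \emph{at most} $r$ copies of $\Z_\ell$ --- the ``at most'' is exactly the point your $\Z_d^r$ misses. Once this step is repaired, the remainder of your argument (including the final diagram chase identifying the image of $G[f^\infty]$ with the torsion of $G^{(f)}$, which works because $H[f^\infty] \cap f^i(G) = H[f^\infty] \cap f^\infty(H) = 0$ for large $i$) goes through and recovers the paper's proof.
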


\begin{proof}
	Let $H = G/G_\textup{tors}$. Since $f$ has finite kernel it induces an injective map $H \to H$. The snake lemma applied to
	\[
		\xymatrix{
			0 \ar[r]& G_\textup{tors} \ar[d]^{f^i}\ar[r] & G \ar[d]^{f^i}\ar[r] & H \ar[d]^{f^i}\ar[r] & 0\\
			0 \ar[r]& G_\textup{tors} \ar[r] & G \ar[r] & H \ar[r] & 0
		}
	\]
	gives an exact sequence
	\[
		0 \to G_\textup{tors}/f^i \to G/f^i \to H/f^i \to 0\,.
	\]
	Taking limits and then torsion subgroups we obtain the exact sequence
	\begin{equation}\label{eq:Gs}
		0 \to (G_\textup{tors})^{(f)} \to (G^{(f)})_\textup{tors} \to (H^{(f)})_\textup{tors}\,.
	\end{equation}
	The group $H$ is isomorphic to $\Z^r$ for some $r$ and $f$ has finite kernel. Using Smith Normal Form for the endomorphisms $f^i : H \to H$ we get that, $H^{(f)}\otimes \Z_\ell$ is isomorphic to a product of at most $r$ copies of $\Z_\ell$, for every prime $\ell$. It follows that $H^{(f)}$ is torsion free. Since $G$ is finitely generated, for sufficiently large $i$ we have that $f^i$ induces the $0$ map on $G[f^\infty]$ and $f^i$ induces an automorphism of the quotient $G_\textup{tors}/G[f^\infty]$. So, for sufficiently large $i$, we have $G_\textup{tors}/f^i \simeq G[f^\infty]$. It follows that $\left(G_\textup{tors}\right)^{(f)} = G[f^\infty]$. The result now follows from~\eqref{eq:Gs}.
\end{proof}

\begin{Lemma}\label{lem:tors}
	Let $f \in \End(A)$ be an isogeny. The natural maps $A(k) \to A(k)^{(f)} \to \Sel^{(f)}$ induce isomorphisms $A(k)[f^\infty] \simeq A(k)^{(f)}_\textup{tors} \simeq \Sel^{(f)}_\textup{tors}$.
\end{Lemma}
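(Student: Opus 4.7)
The first isomorphism $A(k)[f^\infty] \simeq A(k)^{(f)}_\textup{tors}$ will be a direct application of Lemma~\ref{lem:G}. Indeed, $A(k)$ is finitely generated by Mordell-Weil, and the restriction of $f$ to $A(k)$ has finite kernel since $f$ is an isogeny, so the hypotheses of Lemma~\ref{lem:G} are satisfied with $G = A(k)$.

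For the second isomorphism, I plan to pass to the inverse limit in the standard Selmer short exact sequence
\[
0 \To A(k)/f^i(A(k)) \To \Sel^{f^i} \To \Sha(A)[f^i] \To 0
\]
extracted from the commutative diagram at the start of the section. All three terms are finite --- $\Sel^{f^i}$ by the general finiteness of Selmer groups, and $A(k)/f^i A(k)$ because $f^i$ acts with nonzero determinant on the free part of the finitely generated group $A(k)$ --- so the Mittag-Leffler condition holds and exactness is preserved in the limit, giving
\[
0 \To A(k)^{(f)} \To \Sel^{(f)} \To T_f \Sha(A) \To 0\,,
\]
where I write $T_f\Sha(A) := \varprojlim_i \Sha(A)[f^i]$. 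Taking torsion subgroups then yields the exact sequence $A(k)^{(f)}_\textup{tors} \injects \Sel^{(f)}_\textup{tors} \To T_f\Sha(A)_\textup{tors}$, so the claim reduces to showing that $T_f\Sha(A)$ is torsion-free.

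The main obstacle, and the technical heart of the argument, is this final torsion-freeness. My plan is the following: given $y = (y_i) \in T_f\Sha(A)$ with $Ny = 0$, each $y_i$ lies in the finite subgroup $G := \Sha(A)[N]$ (finite as a quotient of $\Sel^N$), and because $f$ commutes with multiplication by $N$ the subgroup $G$ is $f$-stable. The ascending chain $G[f] \subseteq G[f^2] \subseteq \cdots$ of subgroups of the finite group $G$ must stabilize at some $G[f^\infty]$, on which $f$ necessarily acts nilpotently. Choosing $m$ large enough that $f^m$ kills $G[f^\infty]$ and using the compatibility $y_i = f^m(y_{i+m})$ then forces $y_i = 0$ for all $i$, as required. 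Apart from keeping track of the fact that $f$ is an arbitrary isogeny rather than multiplication by an integer --- handled by the commutation with $N$ --- I expect no substantive difficulty.
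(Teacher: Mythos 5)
Your proof is correct and follows essentially the same route as the paper: the first isomorphism via Lemma~\ref{lem:G} (applicable since $A(k)$ is finitely generated and $f$ has finite kernel), and the second by passing to the limit in the sequences $0 \to A(k)/f^i(A(k)) \to \Sel^{f^i} \to \Sha(A)[f^i] \to 0$ and using that $\varprojlim_i \Sha(A)[f^i]$ is torsion-free. The only differences are that the paper merely asserts this torsion-freeness while your nilpotence argument on $\Sha(A)[N][f^\infty]$ supplies a valid justification, and that your Mittag--Leffler appeal for right-exactness, though correct, is unnecessary since left-exactness of the inverse limit already suffices.
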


\begin{proof}
	The first isomorphism follows from Lemma~\ref{lem:G}, which applies since $A(k)$ is finitely generated by the Mordell-Weil theorem. For the second, taking limits of the exact sequences $0 \to A(k)/f^i(A(k)) \to \Sel^{f^i} \to \Sha(A)[f^i] \to 0$ gives an exact sequence $0 \to A(k)^{(f)} \to \Sel^{(f)} \to \varprojlim_i \Sha(A)[f^i]\,.$ The last term is torsion free, so the torsion subgroups of the first two terms are isomorphic.
	\end{proof}

\subsection{The pro-$p$ Selmer group}
Let $p \ge 0$ denote the characteristic of $k$. We collect here some results relevant for the case $p > 0$. We include the case $p = 0$ in the following definition to avoid having to treat these cases separately later. 

\begin{Definition}
We say that $A$ is $p$-\defi{split} if there exists endomorphisms $p_e,p_c \in \End(A)$ such that 
\begin{enumerate}
	\item $p = p_ep_c = p_cp_e$ in $\End(A)$.
	\item The kernel of $p_e$ is an \'etale group scheme, and
	\item The kernel of $p_c$ is a connected group scheme.
\end{enumerate}
\end{Definition}

\begin{Lemma}\label{lem:Selpsplit}
If $A$ is $p$-split, then $\Sel^{(p)}(A) \simeq \Sel^{(p_c)} \times \Sel^{(p_e)}$.
\end{Lemma}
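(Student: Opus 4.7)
The plan is to decompose each finite-level group scheme $A[p^n]$ as $A[p_e^n] \oplus A[p_c^n]$, promote this to the corresponding Selmer groups, and then take the inverse limit.

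First I would establish the group-scheme decomposition. Any finite group scheme over a field that is both \'etale and connected is trivial, so extensions of \'etale (respectively, connected) finite group schemes are again \'etale (respectively, connected). Applying this inductively to the short exact sequences $0 \to \ker p_e \to A[p_e^n] \to A[p_e^{n-1}] \to 0$ and its analogue for $p_c$ shows that $A[p_e^n]$ is \'etale and $A[p_c^n]$ is connected. Their intersection is thus trivial, and the identity $\deg(p_e)\deg(p_c) = \deg(p) = p^{2\dim A}$ yields $|A[p_e^n]| \cdot |A[p_c^n]| = |A[p^n]|$, so the natural inclusion $A[p_e^n] \oplus A[p_c^n] \hookrightarrow A[p^n]$ is an isomorphism.

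Next I would lift this to $\Sel^{p^n} \simeq \Sel^{p_e^n} \oplus \Sel^{p_c^n}$. Fppf cohomology turns the group-scheme decomposition into $\HH^1(k, A[p^n]) \simeq \HH^1(k, A[p_e^n]) \oplus \HH^1(k, A[p_c^n])$, and likewise over each $k_v$. Compatibility with the local Kummer condition comes from the morphism of short exact sequences from the $p_e^n$-Kummer sequence to the $p^n$-Kummer sequence with column maps $(\iota_e, \mathrm{id}_A, p_c^n)$, which gives the cohomological identity $\iota_{e,*} \circ \delta_{p_e^n} = \delta_{p^n} \circ p_c^n$ at each $v$. This immediately yields the inclusion $\iota_e(\Sel^{p_e^n}) \oplus \iota_c(\Sel^{p_c^n}) \subseteq \Sel^{p^n}$. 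For the reverse, I would use that $p_c^n$ restricts to an automorphism $\tau$ of $A[p_e^n]$ (an injective endomorphism of a finite group scheme, since $A[p_e^n] \cap A[p_c^n] = 0$); a parallel functoriality diagram for $\tau$ then identifies the $A[p_e^n]$-component of any $\delta_{p^n}(P_v)$ with $\delta_{p_e^n}(P_v)$, so it lies in the image of the local $p_e^n$-Kummer map, as required.

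Finally, I would apply these two steps verbatim to the doubly indexed system $\{\Sel^{p_e^i p_c^j}\}_{i,j}$, obtaining $\Sel^{p_e^i p_c^j} \simeq \Sel^{p_e^i} \oplus \Sel^{p_c^j}$ at every $(i,j)$. The diagonal $\{(n,n)\}$ is cofinal, so the inverse limit of the double system coincides with $\Sel^{(p)}$. Under the product decomposition the two variables separate, since the transitions in $i$ act by $p_e$ on the first factor and by an automorphism on the second factor (and dually for $j$); the double limit therefore collapses to $\Sel^{(p_e)} \oplus \Sel^{(p_c)}$. The subtlest part is the Selmer-level step, specifically extracting the components of a local Kummer class under the direct sum decomposition and checking they sit in the expected Kummer images; everything rests on the existence of the automorphism $\tau = p_c^n|_{A[p_e^n]}$ (and its dual on $A[p_c^n]$), which holds precisely because $p_e$ and $p_c$ commute globally on $A$ and their kernels are disjoint as group schemes.
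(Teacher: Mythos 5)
Your overall route is the same as the paper's: the paper's entire proof is the observation that $0 \to A[p_c^i] \to A[p^i] \to A[p_e^i] \to 0$ splits, i.e.\ precisely the decomposition $A[p^n] \simeq A[p_e^n]\oplus A[p_c^n]$ you establish in your first step (étale and connected parts with trivial intersection, orders matching by degree count), with the transfer to Selmer groups and to the limit left implicit. Your first and third steps are fine (for the collapse of the double limit, note that the "twisting" automorphisms of the group schemes induce injective, hence bijective, endomorphisms of the \emph{finite} groups $\Sel^{p_e^i}$ and $\Sel^{p_c^j}$, so they do not shrink the limit). The one step that does not work as written is the reverse inclusion at finite level. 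If $P_v = p^nQ$, then computing $\delta_{p_e^n}(P_v)$ from the lift $R = p_c^nQ$ shows that the $A[p_e^n]$-component of $\delta_{p^n}(P_v)$ is $(\tau_*)^{-1}\delta_{p_e^n}(P_v)$ with $\tau = p_c^n|_{A[p_e^n]}$, \emph{not} $\delta_{p_e^n}(P_v)$ itself; so "hence it lies in the image of the local $p_e^n$-Kummer map" needs an argument that $(\tau_*)^{-1}$ preserves that image, which is not formal (the image is only visibly stable under $\tau_*$, and an injective endomorphism of an infinite group need not be onto). The repair is short: since $A[p_e^n]$ is finite étale, $\Aut(A[p_e^n])$ embeds in the automorphism group of the finite group $A[p_e^n](k^\sep)$, so $\tau$ has some finite order $m$; then $\tau^{-1}=\tau^{m-1}$ is the restriction of $p_c^{n(m-1)}\in\End(A)$, and the component equals $\delta_{p_e^n}\bigl(p_c^{n(m-1)}P_v\bigr)$, which is in the Kummer image.

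Your parenthetical suggestion to treat the $A[p_c^n]$-component symmetrically via "the dual automorphism" $p_e^n|_{A[p_c^n]}$ is more problematic: that group scheme is connected, its automorphism group need not be finite (e.g.\ forms of $\alpha_p$ have $\G_m$ worth of automorphisms), and $A(k_v)/p_c^nA(k_v)$ can be infinite, so neither the finite-order trick nor a finiteness argument applies. Fortunately that half is unnecessary: once you know $\iota_{e,*}\xi_e$ dies in $\HH^1(k_v,A)$ for every $v$, the other component $\iota_{c,*}\xi_c = \xi - \iota_{e,*}\xi_e$ dies there as well, so $\xi_c \in \Sel^{p_c^n}$ automatically. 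With these two adjustments your argument is a correct filling-in of the paper's one-line proof.
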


\begin{proof}
	If $A$ is $p$-split, then the exact sequence $0 \to A[p_c^i] \to A[p^i] \to A[p_e^i] \to 0$ splits for every $i \ge 1$.
\end{proof}

The following lemma specializes to \cite[Proposition 5.2]{PoonenVoloch} in the case that $A(k^\sep)[p] = 0$ (which implies that $A$ is $p$-split with $p_e = 1$ and $p_c = p$). A proof of the more general statement given here can be found in the proof of \cite[Lemma 3.2]{CV3}.

\begin{Lemma}\label{lem:pcSel}
	Suppose that $A$ is $p$-split. For any prime $v$ of $k$, the canonical map $\Sel^{(p_c)} \to A(k_v)^{(p_c)} = \varprojlim_i A(k_v)/p_c^i(A(k_v))$ is injective.
\end{Lemma}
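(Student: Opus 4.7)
The plan is to show that any $\xi = (\xi_i)_i$ in the kernel of $\Sel^{(p_c)} \to A(k_v)^{(p_c)}$ must vanish; I focus on the substantive function field case $p>0$, as the $p=0$ case is degenerate under the $p$-split conventions.

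First I would observe that since $A$ is $p$-split with $\ker(p_c)$ a connected finite flat group scheme, iterating shows each $A[p_c^i]$ is also a connected finite flat group scheme over $k$. A connected finite scheme over a field is supported at a single reduced point, which for a group scheme is the identity, so $A[p_c^i](k_v)=0$. The fppf Kummer sequence at $v$ therefore yields an injection
\[
	A(k_v)/p_c^i(A(k_v)) \hookrightarrow \HH^1(k_v, A[p_c^i]).
\]
Consequently, if $\xi \in \Sel^{(p_c)}$ lies in the kernel, then each $\xi_i \in \Sel^{p_c^i} \subset \HH^1(k, A[p_c^i])$ has trivial localization in $\HH^1(k_v, A[p_c^i])$.

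Next I would apply Lemma~\ref{lem:tors} to the defining sequences $0 \to A(k)/p_c^iA(k) \to \Sel^{p_c^i} \to \Sha(A)[p_c^i] \to 0$ to identify the torsion subgroup of $\Sel^{(p_c)}$ with $A(k)[p_c^\infty]$. The latter vanishes because $A[p_c^\infty]$ is a pro-connected group scheme over $k$. Hence $\Sel^{(p_c)}$ is torsion-free, and to conclude it suffices to show that the kernel $K$ of the map to $A(k_v)^{(p_c)}$ is $p_c$-divisible. Indeed, if $p_cK=K$ then $K \subset \bigcap_i p_c^i \Sel^{(p_c)}$; but any element $\xi$ of this intersection has, for each $j$, the form $\xi = p_c^j \eta^{(j)}$ with $\eta^{(j)} \in \Sel^{(p_c)}$, so its $j$-th component is $\xi_j = p_c^j \eta^{(j)}_j$ for some $\eta^{(j)}_j \in \Sel^{p_c^j}$ which is killed by $p_c^j$, forcing $\xi_j=0$ for all $j$ and hence $\xi=0$.

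The main obstacle is establishing the $p_c$-divisibility of $K$. The idea, carried out in the proof of \cite[Lemma 3.2]{CV3}, is to combine the Kummer injection from the first paragraph with a diagram chase along the Selmer tower $\Sel^{p_c^{i+1}} \to \Sel^{p_c^i}$: given $\xi \in K$, the vanishing of the localization of $\xi_{i+1}$ in $\HH^1(k_v, A[p_c^{i+1}])$ permits the construction of a compatible preimage of $\xi$ under $p_c$ that again lies in $K$. The decisive use of the $p$-split hypothesis is that the connectedness of $A[p_c^i]$ is preserved under iteration, providing the local rigidity at the single characteristic-$p$ place $v$ needed to carry out this lifting at every level of the tower.
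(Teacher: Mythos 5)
Your preliminary reductions are fine but peripheral, and the step that carries the entire content of the lemma is missing. The only hypothesis available on an element $\xi=(\xi_i)$ of the kernel $K$ is that each $\xi_i$ localizes to zero in $\HH^1(k_v,A[p_c^i])$ at the \emph{single} place $v$; your proposal gives no mechanism by which this produces a global class $\eta\in\Sel^{(p_c)}$ with $p_c\eta=\xi$ (let alone one again in $K$). Divisibility inside Selmer groups is governed by divisibility in $\Sha$ and is not formal, so reducing the lemma to ``$K$ is $p_c$-divisible'' does not reduce the difficulty at all --- $K$ is divisible only because it is zero, which is what is to be proved. The one-sentence gloss attributing a ``diagram chase along the Selmer tower'' to the proof of \cite[Lemma 3.2]{CV3} is therefore not a proof but a deferral; note that the paper itself does exactly this (it records that the statement specializes to \cite[Proposition 5.2]{PoonenVoloch} when $A(k^\sep)[p]=0$ and otherwise cites \cite{CV3}), so your write-up adds reductions without supplying the missing argument. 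Your actual uses of connectedness --- that $A[p_c^i](k_v)=0$ and that $\Sel^{(p_c)}$ is torsion-free --- do no real work: the injection $A(k_v)/p_c^iA(k_v)\hookrightarrow\HH^1(k_v,A[p_c^i])$ is just the Kummer connecting map and holds for any isogeny, and torsion-freeness is not even needed given your (correct) observation that $\bigcap_i p_c^i\Sel^{(p_c)}=0$.

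The decisive point, which must use connectedness of $A[p_c^i]$ in a genuinely global-to-local way, is a level-by-level statement: a class in $\HH^1(k,A[p_c^i])$ whose restriction to $\HH^1(k_v,A[p_c^i])$ vanishes is already trivial. The reason is that a torsor under a connected (infinitesimal) finite group scheme killed by the $m$-th power of Frobenius is split by the purely inseparable extension $k^{1/p^m}$, so its reduced subscheme is $\Spec K$ with $K\subset k^{1/p^m}$; a $k_v$-point forces $K\subset k^{1/p^m}\cap k_v=k$ (linear disjointness of $k^{1/p^m}$ and $k_v$ over $k$), hence the torsor is trivial. This gives $\Sel^{p_c^i}\hookrightarrow A(k_v)/p_c^i A(k_v)$ for every $i$, and the lemma follows by passing to the limit --- no divisibility argument is involved. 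Without something of this kind, the implication ``locally trivial at the one place $v$ $\Rightarrow$ trivial'' is neither proven nor correctly reduced in your proposal, so there is a genuine gap.
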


While not all abelian varieties are $p$-split, the following result due to R\"ossler shows that all are isogenous over some finite extension to a $p$-split abelian variety.

\begin{Lemma}\label{lem:Rossler1}
	There exists a finite separable extension $L/k$, a $p$-split abelian variety $B/L$, and an isogeny $A_L \to B$. 
\end{Lemma}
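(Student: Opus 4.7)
The plan is to reduce to positive characteristic (the case $\Char(k)=0$ is immediate with $L=k$, $B=A$, $p_c=1$ and $p_e=p$, since $A[p]$ is \'etale) and then exploit the structure of the $p$-divisible group of $A$. So assume $p=\Char(k)>0$.

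Over the algebraic closure $\kbar$, the Dieudonn\'e--Manin classification of $p$-divisible groups guarantees that $A_{\kbar}[p^\infty]$ is isogenous to a direct sum of its isoclinic components; in particular there is an isogeny of $p$-divisible groups whose target is of the form $G^\circ\oplus G^{\textup{\'et}}$, with $G^\circ$ connected and $G^{\textup{\'et}}$ \'etale. Such an isogeny is given by quotienting $A_{\kbar}$ by a finite subgroup scheme $K_0\subset A_{\kbar}[p^n]$ for some $n\ge 1$, producing an abelian variety $A_{\kbar}/K_0$ whose $p$-divisible group splits canonically as $G^\circ\oplus G^{\textup{\'et}}$.

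The main technical step is to descend $K_0$, and hence the isogeny, to a finite separable extension $L/k$. For this I would invoke the analysis of the slope filtration of $p$-divisible groups of abelian varieties over function fields carried out in \cite{PinkRossler,RosslerML}. Those results ensure that the relevant subgroup scheme can in fact be taken to be defined over a finite separable (rather than merely finite) extension $L/k$. Setting $B:=A_L/K$ for the descended kernel $K\subset A_L[p^n]$ then yields an isogeny $A_L\to B$ over $L$ for which $B[p^\infty]$ decomposes over $L$ as a direct sum $B[p^\infty]^\circ\oplus B[p^\infty]^{\textup{\'et}}$ of a connected and an \'etale $p$-divisible group.

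Finally, this splitting of $B[p^\infty]$ produces the required endomorphisms $p_c,p_e\in\End(B)$: define $p_c$ to act as multiplication by $p$ on the connected summand and as the identity on the \'etale summand, and $p_e$ to act in the opposite way. Then $p_cp_e=p_ep_c=p$ in $\End(B)$, while $\ker(p_c)=B[p]^\circ$ is connected and $\ker(p_e)=B[p]^{\textup{\'et}}$ is \'etale, exhibiting $B$ as $p$-split. The principal obstacle I anticipate is the descent step from $\kbar$ to a finite separable extension of $k$ — the slope decomposition is a priori only available over $\kbar$, and extracting a separable (as opposed to purely inseparable or merely finite) model of $K_0$ is precisely the nontrivial content that is extracted from the cited structural results.
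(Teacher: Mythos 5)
There is a genuine gap here, in fact two. First, your final step does not produce what the definition of $p$-split requires. A decomposition $B[p^\infty]\simeq G^\circ\oplus G^{\textup{\'et}}$ only lets you define the maps ``multiplication by $p$ on one summand, identity on the other'' as endomorphisms of the $p$-divisible group; the lemma needs $p_c,p_e\in\End(B)$, i.e.\ endomorphisms of the abelian variety itself, and the map $\End(B)\otimes\Z_p\to\End(B[p^\infty])$ is injective but in general very far from surjective, so nothing in your argument shows these endomorphisms are algebraic. (Indeed $p$-splitness \emph{implies} such a splitting of $B[p^\infty]$, but the converse is not formal: from a splitting you only get a factorization of $p$ through the quotient isogeny $B\to B/B[p]^\circ$, and there is no reason that quotient is isomorphic to $B$.) Second, the descent step is exactly the hard part and you have not supplied it. Over the perfect field $\kbar$ the connected--\'etale sequence of $A_\kbar[p^\infty]$ already splits with no isogeny and no Dieudonn\'e--Manin theory needed, so the entire content of the lemma is to obtain such a splitting after only a finite \emph{separable} extension together with an isogeny; the splitting over $\kbar$ a priori involves inseparable extensions and does not descend. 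The references you invoke for this, \cite{PinkRossler} and \cite{RosslerML}, concern Manin--Mumford and Mordell--Lang and contain no such descent statement, so the ``main technical step'' is asserted rather than proved -- you correctly identify it as the principal obstacle, but you do not overcome it.

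For comparison, the paper does not reprove this result at all: its proof is a citation to the appendix by R\"ossler to \cite{CV3} (Proposition A.1 there), which was written precisely to supply the construction of the isogeny $A_L\to B$ with $B$ $p$-split, together with the observation that the hypotheses of that proposition are satisfied after a finite separable base change. A self-contained argument would essentially have to reproduce the content of that appendix, and your sketch currently replaces it with appeals to results that do not contain it. A small further slip: with the paper's convention $p=\Char(k)=0$ in the number field case, you should take $p_e=1$ and $p_c=0$; with your choice $p_e=p=0$ the kernel of $p_e$ is all of $A$, which is connected rather than \'etale.
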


\begin{proof}
	See \cite[Appendix by R\"ossler]{CV3}. Note that after base change to some finite separable extension $A$ will satisfy the conditions of \cite[Proposition A.1]{CV3}.
\end{proof}

\subsection{A result of Serre}

The following result due to Serre \cite[p. 734, Corollaire]{SerreII} will be used in the proof of Proposition~\ref{lem:infords}. Our proof of Proposition~\ref{lem:infords} given below uses some ideas from \cite[Section 3]{Stoll}, which are based on stronger `image of Galois' results of Bogomolov and Serre. Those results are, however, not valid in the global function field case \cite{Zarhin}.

\begin{Lemma}\label{lem:Serre}
	Let $\ell$ be a prime number and let $T_\ell(A) = \varprojlim_i A(k^\sep)[\ell^i]$ be the \'etale $\ell$-adic Tate module. Let $G_\ell \subset \Aut(T_\ell(A))$ denote the image the of representation describing the action of $\Gal(k)$ on $T_\ell(A)$. Then $\HH^1(G,T_\ell(A))$ is finite. 
\end{Lemma}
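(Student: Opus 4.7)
My plan is to establish the finiteness of $\HH^1(G_\ell, T_\ell(A))$ by showing that it is a finitely generated $\Z_\ell$-module whose $\ell$-torsion subgroup is finite and whose $\Z_\ell$-rank is zero.

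First I would note that $G_\ell$ is a closed subgroup of $\Aut(T_\ell(A)) \cong \GL_{2g}(\Z_\ell)$, hence a compact $\ell$-adic analytic group. Since $T_\ell(A)$ is a continuous $G_\ell$-module, finitely generated and free over $\Z_\ell$, general results on continuous cohomology of $\ell$-adic Lie groups (Lazard) imply that $\HH^i(G_\ell, T_\ell(A))$ is a finitely generated $\Z_\ell$-module for every $i$. Such a module is finite if and only if its $\ell$-torsion subgroup is finite and its $\Z_\ell$-rank is zero.

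For the $\ell$-torsion, I would extract from the long exact sequence associated to $0 \to T_\ell(A) \xrightarrow{\ell} T_\ell(A) \to A[\ell] \to 0$ that $\HH^1(G_\ell, T_\ell(A))[\ell]$ is a quotient of $\HH^0(G_\ell, A[\ell]) = A[\ell]^{G_\ell}$, which is finite.

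For the rank, the question is equivalent to $\HH^1(G_\ell, V_\ell(A)) = 0$, where $V_\ell(A) = T_\ell(A) \otimes_{\Z_\ell} \Q_\ell$. This is the main obstacle, and is precisely the content of Serre's theorem \cite[p. 734, Corollaire]{SerreII}. In the number field case one can give a direct proof using Bogomolov's theorem that $G_\ell$ contains an open subgroup of homotheties by $1 + \ell^n \Z_\ell$: such a central element $\sigma$, with $\sigma - 1$ acting invertibly on the $\Q_\ell$-vector space $V_\ell(A)$, kills $\HH^1$ via a Hochschild-Serre argument. Over function fields Bogomolov's stronger theorem fails by Zarhin's counterexamples \cite{Zarhin}, so one cannot use this shortcut uniformly, and Serre's more refined argument (which is what the lemma directly cites) must be invoked to handle the general global field case.
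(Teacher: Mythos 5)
Your reduction steps are fine as far as they go: $G_\ell$ is a compact $\ell$-adic analytic group, the continuous cohomology $\HH^1(G_\ell,T_\ell(A))$ is a finitely generated $\Z_\ell$-module, and finiteness is then equivalent to the vanishing of $\HH^1(G_\ell,V_\ell)$ with $V_\ell=T_\ell(A)\otimes\Q_\ell$ (the separate torsion argument is harmless but redundant once finite generation is known). This is also the framework of the paper's proof, which runs through Serre's comparison with Lie algebra cohomology. The genuine gap is in the final step, where you declare that the vanishing of $\HH^1(G_\ell,V_\ell)$ ``is precisely the content of Serre's theorem'' for a general global field. Serre's Corollaire on p.~734 of the cited paper covers only the primes $\ell$ different from the characteristic of $k$; the case that actually matters for this paper, $\ell=p=\Char(k)$ with $T_p(A)=\varprojlim A(k^\sep)[p^i]$ the \emph{\'etale} $p$-adic Tate module (of rank equal to the $p$-rank, at most $g$, so in particular $\Aut(T_p(A))\not\cong\GL_{2g}(\Z_p)$), is not handled by that citation and is the whole point of the lemma as stated for global function fields. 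Your closing remark that ``Serre's more refined argument must be invoked to handle the general global field case'' simply defers the missing content to a reference that does not contain it.

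What is needed, and what the paper supplies, is a verification that Serre's criterion still applies at $\ell=p$: one must exhibit an element of the image of Galois whose image in the Lie algebra $\mathfrak{g}_p$ satisfies Serre's condition $(P_N)$, since $(P_N)$ is what forces $\HH^1(\mathfrak{g}_p,V_p)=0$ and hence finiteness of $\HH^1(G_p,T_p(A))$. The paper does this with a Frobenius element $F_v$ at a prime of good reduction: by Manin's theorem the characteristic polynomial $P(T)$ of Frobenius (a priori defined via $T_{\ell_0}$ for $\ell_0\ne p$) is also the characteristic polynomial of $F_v$ on the Dieudonn\'e module, and by Bloch--Illusie the slope-$0$ part of that module is identified with the \'etale Tate module $T_p(A)$, so the eigenvalues of $F_v$ on $T_p(A)$ are the $p$-adic unit roots of $P(T)$, which are $q$-Weil numbers; this yields $(P_N)$ exactly as in Serre's Lemmes 2 and 3. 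Without some argument of this kind (crystalline/Dieudonn\'e input identifying the Frobenius eigenvalues on $T_p(A)$), your proposal proves the lemma only for $\ell\ne\Char(k)$, and your Bogomolov-versus-Zarhin discussion, while correct, does not touch this issue since it concerns homotheties at $\ell\ne p$ rather than the equicharacteristic prime.
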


\begin{proof}
	When $\ell$ is not equal to the characteristic of $k$, this is~\cite[p. 734, Corollaire and Remarques 2)]{SerreII}. We repeat part of the argument given there, explaining how to extend to the case when $\ell$ is equal to the characteristic.
	
	The image $G_\ell$ of $\Gal(k)$ in $\Aut(T_\ell(A))$ is an $\ell$-adic lie group acting continuously on $V_\ell = T_\ell(A) \otimes \Q_\ell$. Serre shows that if the corresponding Lie algebra $\mathfrak{g}_\ell$ contains an element satisfying condition $(P_N)$ on \cite[p.732]{SerreII}, then $\HH^1(\mathfrak{g}_\ell,V_\ell) = 0$  \cite[Th\'eor\`eme 1]{SerreII}. From this it follows that $\HH^1(G,T_\ell(A))$ is finite \cite[Corollaire on p. 734]{SerreII}. It thus suffices to find an element in $G_\ell$ satisfying $(P_N)$.
	 
	Let $v$ be a prime of good reduction for $A$ with residue field $\F_v$ and let $p$ be the residue characteristic of $v$. Consider the action of a decomposition group $G_v \subset \Gal(k)$ on $T_\ell(A) = \varprojlim A(k^\sep)[\ell^i]$. Let $F_v \in G_v$ be a lift of the Frobenius automorphism topologically generating $\Gal(\F_v)$. Let $\ell_0$ be a prime different from $p$ and let $P(T) \in \Z[T]$ be the characteristic polynomial of $F_v$ acting on $T_{\ell_0}(A)$. As is well known, the roots of $P(T)$ are $q$-Weil numbers, where $q = |\F_v|$ and $P(T)$ is the characteristic polynomial of $F_v$ acting on $T_\ell(A)$ for all $\ell \ne p$. By a result of Manin \cite[p. 96 Corollary]{Demazure}, $P(T)$ is also the characteristic polynomial of $F_v$ acting on the Dieudonn\'e module at $\ell = p$. Moreover, the $p$-adic unit roots of $P(T)$ correspond to the slope $0$ part of the Dieudonn\'e module \cite[p. 98 Theorem]{Demazure}. By a result of Bloch-Illuse \cite{Illusie}, the slope $0$ part identifies with the \'etale Tate module $T_p(A)$. We conclude that for all $\ell$ (including $\ell = p$), the eigenvalues of $F_v$ acting on $T_\ell(A)$ are the $\ell$-adic unit roots of $P(T)$. Since these roots are $q$-Weil numbers, this implies (as in \cite[Lemme 2 and Lemme 3]{SerreII}) that the image of $F_v$ in $\mathfrak{g}_\ell$ satisfies condition $(P_N)$ on \cite[p. 731]{SerreII}.
\end{proof}

\subsection{The proof of Proposition~\ref{lem:infords}}
	
	\begin{proof}
		First note that if $0 \ne P_j \in \Sel_\textup{tors} = A(k)_\textup{tors}$, then the image of $P_j$ in $A(k_v)$ is nonzero for all but finitely many primes $v$. Hence we can assume all of the $P_j$ are of infinite order.
		
		Let $p$ be the characteristic of $k$. By Lemma~\ref{lem:Rossler1} there is finite separable extension $L/k$ and an isogeny $A_L \to B$ such that $B$ is $p$-split. The induced map ${\Sel}(A) \to {\Sel}(A_L) \to {\Sel}(B)$ has finite kernel. Hence, the images $P'_j$ of the $P_j$ in $\Sel(B)$ all have infinite order. If there is a prime $w$ of $L$ such that all of these $P_j'$ have nonzero image in $B(L_w)$, then the corresponding $P_i$ all have nonzero image in $A(k_v)$ for the prime $v$ below $w$. So, replacing $A$ with $B$ and the $P_j$ with the $P_j'$, we can assume that $A$ is $p$-split. Then, by Lemma~\ref{lem:Selpsplit}, the profinite Selmer group splits as
		$$\Sel = \Sel^{(p_e)} \times \Sel^{(p_c)} \times \prod_{\ell \ne p}\Sel^{(\ell)}\,,$$
		where the terms involving $p_c$ and $p_e$ only appear if the characteristic $p$ of $k$ is not $0$.
		
		For every prime $v$ of $k$, the map $\Sel^{(p_c)} \to A(k_v)^{(p_c)}$ is injective by Lemma~\ref{lem:pcSel}. So, if the image of $P_j$ in $\Sel^{(p_c)}$ is nonzero, then the image of $P_j$ in $A(k_v)$ is nonzero for every prime $v$. We may therefore assume that all of the $P_j$ have trivial image in $\Sel^{(p_c)}$ (by considering instead the subset of the $P_j$ for which this is the case). Hence, we assume that the image of $P_j$ in $\Sel^{(p_e)}\times \prod_{\ell \ne p}\Sel^{(\ell)}$ has infinite order, for all $j$. 

		Let $M = |A(k)_\textup{tors}|$, which is a positive integer by the Mordell-Weil theorem. Let $Q_j = MP_j$. For any \'etale isogeny $f \in \End(E)$, Lemma~\ref{lem:tors} gives $\Sel^{(f)}_\textup{tors} = A(k)^{(f)}_\textup{tors} \simeq A(k)[f^\infty] \subset A(k)_\textup{tors}$. So, for each $j$, the image of $Q_j$ in $\Sel^{(f)}$ is either trivial or has infinite order. Since the $Q_j$ all have infinite order in $\Sel^{(p_e)} \times \prod_{\ell \ne p}\Sel^{(\ell)}$, there is some \'etale isogeny $m = m'p_e$ with $m'$ an integer not divisible by $p$ such that the $Q_j$ all have infinite order in $\Sel^{(m)}$.
		 
		Since limits are left exact, the definition of the Selmer groups gives an injective map $\Sel^{(m)} = \varprojlim \Sel^{m^i} \hookrightarrow \varprojlim \HH^1(k,A[m^i])$. By the universal property of limits there is a canonical map $\HH^1(k,T_m(A)) = \HH^1(k,\varprojlim A[m^i]) \to \varprojlim \HH^1(k,A[m^i])$. Since the groups $\HH^0(k,A[m^i]) = A(k)[m^i]$ are finite, this canonical map is an isomorphism by \cite[Corollary 2.7.6]{CoNF}. Hence, we have an injective map $\Sel^{(m)} \hookrightarrow \HH^1(k,T_m(A))$.
		
		For $i \ge 1$, let $k_i = k(A[m^i])$ and let $k_\infty = k(A[m^\infty]) = \bigcup k_i$. The restriction maps, together with the canonical projections $T_m(A) \to A[m^i]$, then give the following commutative diagram of Galois cohomology groups.
		 \begin{equation}\label{eq:kinf}
		 	\xymatrix{
			\Sel^{(m)} \ar@{^{(}->}[r] &\HH^1(k,T_m(A)) \ar[rrrr]^{\res_{k_\infty/k}} \ar[d] &&&& \HH^1(k_\infty,T_m(A)) \ar[d] \\
			&\HH^1(k,A[m^i]) \ar[rr]^{\res_{k_i/k}} && \HH^1(k_i,A[m^i]) \ar[rr]^{\res_{k_\infty/k_i}} && \HH^1(k_\infty,A[m^i])
			}
		 \end{equation}
		 The kernel of the restriction map in the top row is finite by Lemma~\ref{lem:Serre}. So the images of the $Q_j$ in $\HH^1(k_\infty,T_m(A))$ have infinite order. Since $\HH^1(k_\infty,T_m(A)) = \varprojlim \HH^1(k_\infty,A[m^i])$, this implies that we can choose $N$ so that for all $i \ge N$ and all $j = 1,\dots, r$, the order of the image of $Q_j$ in $\HH^1(k_\infty,A[m^i])$ is greater than $r$. From the diagram~\eqref{eq:kinf} it follows that the images of the $Q_j$ in $\HH^1(k_N,A[m^N]) = \Hom(\Gal(k_N),A[m^N])$ all have order greater than $r$.
		 
		 Let $L_j$ denote the fixed field of the kernel of the homomorphism corresponding to the image of $Q_j$ in $\Hom(\Gal(k_N),A[m^N])$.		 The extensions $L_j/k_N$ have degree strictly greater than $r$. So by Chebotarev's theorem, the density of primes of $k_N$ that split completely in $L_j$ is less than $1/r$. It follows that there is a positive density set of primes of $k_N$ which do not split completely in $L_j$ for any $j = 1,\dots,r$. Note that a prime $w$ of $k_N$ splits completely in $L_j$ if and only if $Q_j$ has trivial image in $A((k_N)_w)/m^N(A((k_N)_w)) \subset \HH^1((k_N)_w,A[m^N])$. Hence there is a positive density set of primes $v \in \Omega_{k_N}$ such that the $v$-adic component of $Q_j$ is nontrivial for all $j = 1,\dots, r$. If $v_0$ denotes the prime of $k$ below such a $v$, then the $v_0$-adic components of the $Q_j$ are also all nonzero. Since $Q_j = MP_j$, the same is true of the $P_j$. This completes the proof.
	\end{proof}

\section{The Brauer Set}\label{sec:Br}		
	
	Let $X$ be a proper variety over a global field $k$. The Brauer group of $X$ is the \'etale cohomology group $\Br(X) := \HH^2(X,\G_m)$. There is a pairing
	\begin{equation}\label{eq:Br}
	 	\langle \,,\,\rangle \colon X(\A_k) \times \Br(X) \to \Q/\Z\,
	\end{equation}
	defined as
	\[
		\langle (x_v),\alpha\rangle = \sum_{v \in \Omega_k} \operatorname{inv}_v (x_v^*\alpha)\,
	\]
	where $x_v^*$ is the map $\Br(X) = \HH^2(X,\G_m) \to \HH^2(\Spec(k_v),\G_m) = \Br(k_v)$ induced by the $k_v$-point $x_v : \Spec(k_v) \to X$, and $\operatorname{inv}_v : \Br(k_v) \hookrightarrow \Q/\Z$ is the invariant map of local class field theory. 
	
	Define $X(\A_k)^{\Br}$ to be the subset of $X(\A_k)$ pairing trivially with all elements of $\Br(X)$. For $\alpha \in \Br(X)$, the map $\langle \,\,,\alpha \rangle : X(\A_k) \to \Q/\Z$ is locally constant. Hence there is an induced pairing on $X(\A_k)_\bullet \times \Br(X)$. We define $X(\A_k)^{\Br}_\bullet$ to be the subset of $X(\A_k)_\bullet$ pairing trivially with all elements of $\Br(X)$. Equivalently, $X(\A_k)_\bullet^{\Br}$ is the image of $X(\A_k)^{\Br}$ in $X(\A_k)_\bullet$.
\subsection{The Brauer set and the Selmer group}

		The Hochschild-Serre spectral sequence in Galois cohomology (see \cite[(2.23)]{Skorobogatov}) gives
\[
	\Br(k) \to \ker(\Br(X) \to \Br(X_{k^\sep})) \stackrel{r}\to \HH^1(k,\Pic_X(k^\sep)) \to 0\,,
\]
where the last term is $\HH^3(k,\G_m) = 0$ since $k$ is a global field.  Let $\Br_0(X)$ denote the image of $\Br(k) \to \Br(X)$. By exactness we get a map $\HH^1(k,\Pic(X_{k^\sep})) \to \Br(X)/\Br_0(X)$. Composing this with the map induced by the inclusion $\Pic^0(X_{k^\sep}) \subset \Pic(X_{k^\sep})$ gives a map 
\begin{equation}\label{eqPic0}
	\HH^1(k,\Pic^0(X_{k^\sep})) \to \Br(X)/\Br_0(X). 
\end{equation}
We define $\Br_{1/2}(X) \subset \Br(X)$ to be the subgroup consisting of all elements whose image in $\Br(X)/\Br_0(X)$ lies in the image of~\eqref{eqPic0}. The subgroup $\Br_0(X)$ pairs trivially with all elements of $X(\A_k)$, so the restriction of~\eqref{eq:Br} to $\Br_{1/2}(X)$ yields a map
	\[
	X(\A_k)_\bullet \stackrel{\textup{BM}}\to (\Br_{1/2}(X)/\Br_0(X))^D
	\]
	where the ${}^D$ means $\Hom(-,\Q/\Z)$. Let $X(\A_k)^{\Br_{1/2}}$ denote the subset of $X(\A_k)$ pairing trivially under~\eqref{eq:Br} with all elements in $\Br_{1/2}(X)$, and define $X(\A_k)_\bullet^{\Br_{1/2}}$ similarly. Equivalently, $X(\A_k)^{\Br_{1/2}}_\bullet$ is the subset of $X(\A_k)_\bullet$ mapping to $0$ in $(\Br_{1/2}(X)/\Br_0(X))^D$ under the map $\textup{BM}$.

\begin{Lemma}\label{lem:1/2}
	Let $A$ be an abelian variety over a global field $k$. Then $A(\A_k)^{\Br_{1/2}}_\bullet = \Sel(A)$.
\end{Lemma}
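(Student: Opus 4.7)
The plan is to route both sides of the claimed equality through $\HH^1(k, A^\vee)$, where $A^\vee$ denotes the dual abelian variety, and then invoke Cassels-Poitou-Tate duality.

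First, I would upgrade the map~\eqref{eqPic0} to an isomorphism $\Br_{1/2}(A)/\Br_0(A) \cong \HH^1(k, A^\vee)$. The key input is that the identity section $e \in A(k)$ provides, via pullback along $e$, a retraction of the Hochschild-Serre edge map $\Br(k) \to \Br(A)$. From the long exact sequence attached to
\[ 0 \to A^\vee(k^\sep) = \Pic^0(A_{k^\sep}) \to \Pic(A_{k^\sep}) \to \operatorname{NS}(A_{k^\sep}) \to 0\,, \]
together with the fact that $\HH^0(k, \Pic(A_{k^\sep})) \to \HH^0(k, \operatorname{NS}(A_{k^\sep}))$ is surjective (again exploiting the basepoint $e$), I would deduce that the connecting map $\HH^0(k, \operatorname{NS}(A_{k^\sep})) \to \HH^1(k, A^\vee)$ vanishes, so that~\eqref{eqPic0} is injective, yielding the desired identification.

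Second, I would verify that under this identification the Brauer-Manin pairing $\textup{BM}$ on $\Br_{1/2}(A)/\Br_0(A)$ agrees (up to sign) with the global pairing
\[ A(\A_k)_\bullet \times \HH^1(k, A^\vee) \to \Q/\Z \]
obtained by summing the local pairings $\HH^0(k_v, A) \times \HH^1(k_v, A^\vee) \to \Br(k_v) \xrightarrow{\operatorname{inv}_v} \Q/\Z$ coming from local Tate duality for abelian varieties. This is a standard naturality argument: pulling the Hochschild-Serre spectral sequence back along $x_v \colon \Spec(k_v) \to A$ converts evaluation of a Brauer class at $x_v$ into a cup product with the image of the class in $\HH^1(k_v, A^\vee)$.

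Third, I would conclude using the Cassels-Poitou-Tate dual exact sequence. Taking limits of the standard Cassels sequences for $\Sel^n(A)$ and applying local Tate duality yields exactness of
\[ 0 \to \Sel(A) \to A(\A_k)_\bullet \xrightarrow{\textup{BM}} \HH^1(k, A^\vee)^D\,, \]
where the second map is obtained from the pairing identified in Step 2. The kernel of $\textup{BM}$ is therefore precisely (the image of) $\Sel(A)$, which is the claim.

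The hardest part will be the compatibility in Step 2: one must carefully reconcile the construction of the Brauer pairing via Hochschild-Serre with the cup product pairing defining local Tate duality for $A$, tracking signs and all the cohomological identifications. In the function field case one must also be somewhat careful about the $p$-primary part in Step 3, but the definition of $\Sel$ in Section~\ref{sec:Sel} is already set up to handle this uniformly.
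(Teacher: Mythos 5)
Your route is essentially the paper's own: identify the pairing against $\Br_{1/2}(A)$ with the sum of local Tate pairings against $\HH^1(k,A^\vee)=\HH^1(k,\Pic^0(A_{k^\sep}))$ (your Step 2; the paper cites the proof of Theorem E of Poonen--Voloch for precisely this compatibility), and then read off the kernel from the Cassels dual exact sequence $0\to\Sel(A)\to A(\A_k)_\bullet\to\HH^1(k,A^\vee)^D$ (your Step 3, again exactly as in the paper). No genuinely different idea enters.

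The one sub-claim that does not hold up as stated is the injectivity half of your Step 1. The basepoint $e$ does give a retraction of $\Br(k)\to\Br(A)$ and surjectivity of $\Pic(A)\to\HH^0(k,\Pic(A_{k^\sep}))$, but it does not make $\HH^0(k,\Pic(A_{k^\sep}))\to\HH^0(k,\operatorname{NS}(A_{k^\sep}))$ surjective, and this surjectivity genuinely fails: for the Jacobian $J$ of a genus-$2$ curve $C$, the theta polarization $\lambda$ is Galois-invariant in $\operatorname{NS}(J_{k^\sep})$, and its image under the connecting map is (up to sign) $\varphi_{\lambda *}\bigl([\Pic^1_C]\bigr)\in\HH^1(k,J^\vee)$, which is nonzero whenever $\Pic^1_C$ is a nontrivial torsor --- this is the obstruction studied by Poonen and Stoll. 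So the map~\eqref{eqPic0} need not be injective, and $\Br_{1/2}(A)/\Br_0(A)$ is in general only a quotient of $\HH^1(k,A^\vee)$, not isomorphic to it. Fortunately the lemma does not need injectivity: by the definition of $\Br_{1/2}$ the map $\HH^1(k,A^\vee)\to\Br_{1/2}(A)/\Br_0(A)$ is surjective, so, granting the compatibility of Step 2 and the fact that $\Br_0(A)$ pairs trivially, a class in $A(\A_k)_\bullet$ is orthogonal to $\Br_{1/2}(A)$ if and only if it is killed by the Tate pairing against all of $\HH^1(k,A^\vee)$, i.e.\ lies in $\ker(\textup{Tate})=\Sel(A)$. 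With the unjustified injectivity claim simply dropped, your argument coincides with the paper's proof, which likewise only uses injectivity of the dual map $(\Br_{1/2}(A)/\Br_0(A))^D\to\HH^1(k,\Pic^0(A_{k^\sep}))^D$ (even though it, too, states the stronger assertion that this map is an isomorphism).
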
	
	
\begin{proof}
As in the proof of \cite[Theorem E]{PoonenVoloch}, the Cassels dual exact sequence and the BM map sits in a commutative diagram with exact row:
		\[ 
			\xymatrix{
				0 \ar[r] &\Sel(A) \ar[r] & A(\A_k)_{\bullet} \ar[dr]_{\textup{BM}} \ar[r]^-{\textup{Tate}} & \HH^1(k,\Pic^0(A_{k^\sep}))^D \\
				&&& \left(\frac{\Br_{\sfrac{1}{2}}(A)}{\Br_0(A)}\right)^D \ar[u]
			}
		\]
		where `$\textup{Tate}$' is induced by the sum of the local Tate pairings $A(k_v) \times \HH^1(k_v,\Pic^0(A_{k^\sep}) \to \Q/\Z$. The vertical map, which comes from~\eqref{eqPic0} and the definition of $\Br_{\sfrac{1}{2}}(A)$, is an isomorphism. So we have $\Sel(A) = \ker(\textup{Tate}) = \ker(\textup{BM}) = A(\A_k)^{\Br_{\sfrac{1}{2}}}$.
\end{proof}

\subsection{The Brauer set for torsors under abelian varieties}

We now prove that, for torsors under an abelian variety $A$ over a global field, Brauer-Manin is the only obstruction to weak approximation if and only if $\Sha(A)$ contains no nontrivial divisible elements. That the Brauer group controls the Hasse principle when $\Sha(A)_\textup{div}$ is trivial was already known to Manin when he first introduced the obstruction~\cite{Manin}, at least over number fields. The analogous statement for weak approximation was given in~\cite{Wang}. The converse of these results over number fields was proved in \cite{CreutzBCyr}. To extend this to all global fields requires that we develop some aspects of the descent theory for abelian torsors as described in \cite[Chapter 6]{Skorobogatov} in the function field setting. We also use recent results of Skorobogatov \cite{Skorob} and Yuan Yang \cite{Yang} concerning $p$-primary torsion in the Brauer group of abelian varieties.

\begin{Theorem}\label{thm:notranscendental}	
	Let $T$ be a torsor under an abelian variety $A$ over a global field $k$. Then:
	\begin{enumerate}
		\item\label{it:t1} $T(\A_k)^{\Br} = T(\A_k)^{\Br_{1/2}}$.
		\item\label{it:t2} $T(\A_k)^{\Br} \ne \emptyset$ if and only if $T$ represents an element of $\Sha(A)_\textup{div}$.
		\item\label{it:t3} Assume $T(\A_k)^{\Br} \ne \emptyset$. Then $T(k)$ is dense in $T(\A_k)^{\Br}_\bullet$ if and only if $\Sha(A)_\textup{div} = 0$.
	\end{enumerate}
\end{Theorem}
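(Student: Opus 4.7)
The plan is to establish (1) first, then deduce (2) from (1) combined with the standard identification of the $\Br_{1/2}$-pairing with Cassels--Tate duality on $\Sha(A)$, and finally derive (3) using (1), (2), Lemma~\ref{lem:1/2}, and the Cassels dual exact sequence.

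For (1), the inclusion $T(\A_k)^{\Br} \subset T(\A_k)^{\Br_{1/2}}$ is immediate; the content lies in the reverse inclusion, which amounts to showing that the pairing of $T(\A_k)^{\Br_{1/2}}$ with $\Br(T)/\Br_{1/2}(T)$ is trivial. The quotient $\Br(T)/\Br_{1/2}(T)$ has two layers: the algebraic piece $\Br_1(T)/\Br_{1/2}(T)$, which embeds into $\HH^1(k, \operatorname{NS}(T_{k^\sep}))$ via the long exact sequence associated to $0 \to \Pic^0 \to \Pic \to \operatorname{NS} \to 0$, and the transcendental quotient $\Br(T)/\Br_1(T)$. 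Since $T_{k^\sep} \cong A_{k^\sep}$, both layers are controlled by the corresponding objects for $A$. For $\ell$-primary parts with $\ell \ne \Char(k)$, vanishing of the pairing on $A(\A_k)^{\Br_{1/2}}$ follows from Skorobogatov--Zarhin-type arguments as in the number field treatment of~\cite{CreutzBCyr}. The $p$-primary part in characteristic $p$ is the new content and is handled using the recent results of Skorobogatov~\cite{Skorob} and Yang~\cite{Yang} on $p$-primary transcendental Brauer classes over function fields.

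For (2), if $T(\A_k)^{\Br} \ne \emptyset$ then $T(\A_k) \ne \emptyset$, so $[T] \in \Sha(A)$. The Brauer--Manin pairing restricted to $\Br_{1/2}(T)/\Br_0(T)$ can be computed, via the identification $\Pic^0(T_{k^\sep}) \cong \hat A$, the sum of local Tate pairings, and Poitou--Tate duality, as the Cassels--Tate pairing $\Sha(A) \times \Sha(\hat A) \to \Q/\Z$ evaluated on $[T]$. The left kernel of this pairing is $\Sha(A)_\textup{div}$, so $T(\A_k)^{\Br_{1/2}} \ne \emptyset$ if and only if $[T] \in \Sha(A)_\textup{div}$; by (1), the same holds for $T(\A_k)^{\Br}$.

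For (3), density of $T(k)$ in the nonempty $T(\A_k)^{\Br}_\bullet$ requires $T(k) \ne \emptyset$, hence $[T] = 0$ and $T \cong A$. If $\Sha(A)_\textup{div} = 0$, then (2) forces $[T] = 0$, and combining (1) with Lemma~\ref{lem:1/2} gives $T(\A_k)^{\Br}_\bullet = \Sel(A)$; the Cassels dual exact sequence exhibits $\Sel(A)/\overline{A(k)}$ as the Pontryagin dual of $\Sha(A)_\textup{div}$, so $\overline{A(k)} = T(\A_k)^{\Br}_\bullet$ as required. Conversely, if $\Sha(A)_\textup{div} \ne 0$, then either $[T] \ne 0$ (making $T(k) = \emptyset$ nondense in the nonempty $T(\A_k)^{\Br}$) or $[T] = 0$ and $\overline{A(k)} \subsetneq \Sel(A) = T(\A_k)^{\Br}_\bullet$; either way density fails.

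The main obstacle is (1): the number field case of $A(\A_k)^{\Br_{1/2}} = A(\A_k)^{\Br}$ was handled in~\cite{CreutzBCyr}, but controlling the $p$-primary transcendental Brauer group over global function fields of characteristic $p$ was out of reach of the classical Skorobogatov--Zarhin methods until the appearance of~\cite{Skorob, Yang}. Once (1) is in place, parts (2) and (3) follow from standard Poitou--Tate and Cassels--Tate formalism.
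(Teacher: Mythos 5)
The main gap is in part (1), which you correctly identify as the heart of the matter but do not actually prove. You reduce to showing that points of $T(\A_k)^{\Br_{1/2}}$ pair trivially with $\Br(T)/\Br_{1/2}(T)$ and propose to handle the algebraic layer (via $\HH^1(k,\operatorname{NS}(T_{k^\sep}))$) and the transcendental layer separately by ``Skorobogatov--Zarhin-type arguments'' plus \cite{Skorob,Yang} for the $p$-part. But those inputs only control the structure of the Brauer group (e.g.\ that multiplication by $n$ on an abelian variety over a separably closed field induces multiplication by $n^2$ on $\Br$); they say nothing, by themselves, about why orthogonality to the small subgroup $\Br_{1/2}(T)$ forces orthogonality to transcendental or N\'eron--Severi classes. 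The mechanism, both in \cite{CreutzBCyr} and in the paper, is a covering argument: (i) any $\alpha\in\Br(T)$ of order $n$ satisfies $\phi^*\alpha=0$ for every $m$-covering $\phi\colon U\to T$, where $m$ is a suitable power of $n$ (this is where \cite{Skorob,Yang,DAddezio} enter); (ii) an $m$-covering of $T$ exists; (iii) any $x\in T(\A_k)^{\Br_{1/2}}$ lifts to a twist of it, whence $\langle x,\alpha\rangle=\langle y,\phi^*\alpha\rangle=0$. Your proposal contains no analogue of (ii) or (iii), and these are precisely the steps that are delicate over function fields: the descent theory for abelian torsors in Skorobogatov's book is written only for number fields, so the paper proves (ii) by hand (embedding $T$ in a Weil restriction $B$, lifting $x$ through a compatible system of $N$-coverings of $B$, and using $\varprojlim_N\Sha^1(k,(B/A)[N])=0$), and redoes (iii) with fppf cohomology, flat local duality and the Poitou--Tate sequence of \cite{Cesnavicius} to cope with $p$-power degrees in characteristic $p$. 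Without some version of these steps, (1) is not established, and since your (2) and (3) are built on (1), the whole argument is incomplete.

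A secondary issue concerns your route to (2): you identify the pairing against $\Br_{1/2}(T)/\Br_0(T)$ with the Cassels--Tate pairing evaluated at $[T]$ and invoke that the left kernel of that pairing is $\Sha(A)_{\textup{div}}$. Over number fields this is classical, but over a global function field both the identification and the left-kernel statement require justification for the $p$-primary part which you do not supply; the paper deliberately avoids this by arguing directly with coverings ($[T]\in\Sha(A)_{\textup{div}}$ if and only if for every $e$ there is a locally soluble $e$-covering of $T$, together with a compactness argument for the converse). Your part (3) is essentially the paper's argument, except that the duality claim that $\Sel(A)/\overline{A(k)}$ is the Pontryagin dual of $\Sha(A)_{\textup{div}}$ is stronger than needed and again raises $p$-primary duality questions; it suffices to use the exact sequence $0\to\varprojlim_n A(k)/nA(k)\to\Sel(A)\to\varprojlim_n\Sha(A)[n]\to 0$ together with $\overline{A(k)}=\varprojlim_n A(k)/nA(k)$, as the paper does.
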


\begin{Corollary}\label{cor:notranscendental}
	If $A$ is an abelian variety $A$ over a global field $k$ then $A(\A_k)^{\Br}_\bullet = \Sel(A)$. Moreover, the image of $A(k)$ in $A(\A_k)^{\Br}_\bullet$ is dense if and only if $\Sha(A)_\textup{div}= 0$.
\end{Corollary}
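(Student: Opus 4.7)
The plan is to obtain the corollary as the special case $T = A$ of Theorem~\ref{thm:notranscendental}, combined with Lemma~\ref{lem:1/2}. First I would observe that the trivial torsor $T = A$ has $T(k) = A(k) \ne \emptyset$ (it contains the identity), and that rational points always pair trivially with the Brauer group under~\eqref{eq:Br}; hence $A(\A_k)^{\Br} \ne \emptyset$. Moreover, as the trivial torsor $A$ represents the zero class in $\Sha(A)$, it lies in $\Sha(A)_\textup{div}$. Thus all three parts of Theorem~\ref{thm:notranscendental} are available in this case.

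For the first assertion, I would apply part~\eqref{it:t1} of Theorem~\ref{thm:notranscendental} to obtain $A(\A_k)^{\Br} = A(\A_k)^{\Br_{1/2}}$. Since the Brauer pairing~\eqref{eq:Br} is locally constant in its adelic argument (this is already used to define $A(\A_k)^{\Br}_\bullet$ and $A(\A_k)^{\Br_{1/2}}_\bullet$), passing to connected components preserves the equality, giving $A(\A_k)^{\Br}_\bullet = A(\A_k)^{\Br_{1/2}}_\bullet$. Lemma~\ref{lem:1/2} then identifies the right-hand side with $\Sel(A)$, yielding the first claim. The density statement then follows immediately from part~\eqref{it:t3} of Theorem~\ref{thm:notranscendental} specialized to $T = A$.

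I do not anticipate a real obstacle here: the substantive work has been carried out in Theorem~\ref{thm:notranscendental} (which rests on the descent theory for abelian torsors together with the inputs of Skorobogatov and Yang on $p$-primary Brauer classes) and in Lemma~\ref{lem:1/2} (which encodes the Cassels dual exact sequence), so the corollary is a two-line instantiation at the trivial torsor.
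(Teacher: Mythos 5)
Your proposal is correct and matches the paper's intended argument: the paper itself derives $A(\A_k)^{\Br}_\bullet = \Sel(A)$ inside the proof of Theorem~\ref{thm:notranscendental}\eqref{it:t3} by combining part~\eqref{it:t1} with Lemma~\ref{lem:1/2}, and the density statement is exactly part~\eqref{it:t3} specialized to $T = A$. Your instantiation at the trivial torsor and the passage to connected components are the same two-line deduction the paper has in mind.
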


In the proof of the theorem we will use the notion of an $n$-covering of $T$. This is an fppf torsor $\phi_U:U \to T$ under $A[n]$ that after base extension to a separable closure becomes isomorphic to the base extension of $[n] : A \to A$. The elements $\tau \in \Sel^n(A)$ are represented by locally soluble $n$-coverings of $A$. Moreover, for every $n \ge 1$ one has that $\Sel(A) \subset \bigcup_{\tau \in \Sel^n(A)}\phi_U(U(\A_k))_\bullet$. See \cite[Lemma 5.4]{PoonenVoloch}.

\begin{proof}[Proof of Theorem~\ref{thm:notranscendental}]
	First note that all of the statements hold trivially if $T(\A_k)=\emptyset$, so we will assume $T(\A_k)\ne\emptyset$ which means that $T$ represents an element of $\Sha(A)$. 

	Since $\Br_{1/2}(T) \subset \Br(T)$ we have $T(\A_k)^{\Br} \subset T(\A_k)^{\Br_{1/2}}$. Suppose $x \in T(\A_k)^{\Br_{1/2}}$ and let $\alpha \in \Br(T)$. To prove~\eqref{it:t1} we will show that $x$ and $\alpha$ pair trivially in~\eqref{eq:Br}. We break the proof of~\eqref{it:t1} into four steps.
	
	{\it Step 1:} Let $n$ be the order of $\alpha$ in the torsion abelian group $\Br(T)$. Then there is a power $m$ of $n$ such that for any $m$-covering $\phi:U \to T$ we have that $\phi^*(\alpha) = 0$ in $\Br(U)$. In the number field case, this is \cite[Lemma 13]{CreutzBCyr}. The proof of this lemma goes through in the function field case provided we make the following minor modifications. In the notation of \cite[Section 3]{CreutzBCyr}, take $\Kbar$ to be the separable closure rather than the algebraic closure. Then~\cite[Lemma 10]{CreutzBCyr} holds by \cite[Corollary 1.4]{Skorob}. The proof of \cite[Lemma 11]{CreutzBCyr} goes through as is. The positive characteristic analogue of \cite[Lemma 12]{CreutzBCyr}, which states that multiplication by $n$ on an abelian variety over an algebraically closed field induces multiplication by $n^2$ on its Brauer group, is given in \cite[Lemma 3.1]{Skorob} (This is a consequence of \cite[Theorem 1.7]{Yang}). This implies the same for abelian varieties over separably closed fields, since the natural map $\Br(A_{k^\sep}) \to \Br(A_{\kbar})$ is injective by \cite[Corollary 3.4]{DAddezio}. With these changes, the proof of~\cite[Lemma 13]{CreutzBCyr} goes through. Note that we are able to work with \'etale cohomology since all of the group schemes involved are smooth.
	
	{\it Step 2:} There exists an $m$-covering of $T$. In the number field case, this follows from the descent theory for abelian torsors \cite[6.1.2(a)]{Skorobogatov}, specifically (1) at the top of page 115 in op. cit. We expect a similar result holds over function fields, but do not know of a reference. Instead we give the following argument (which also works over number fields). Let $K/k$ be a separable extension such that $T(K) \ne \emptyset$. Via the canonical map $T \to B := \Res_{K/k}(T_K)$ we may view $T$ as a closed subvariety of $B$. Note that $B$ can be given the structure of an abelian variety, since $T_K \simeq A_K$. 
	
	We have that $x \in T(\A_k)^{\Br_{1/2}} \subset B(\A_k)^{\Br_{1/2}} = \Sel(B) = \varprojlim_N \Sel^N(B)$. So there is a compatible family of $N$-coverings $\rho_N : B_N \to B$ to which $x$ lifts, where compatible means that for any $N,N' \ge 1$ we have that $\rho_{NN'}$ factors through $\rho_N$. Let $\phi_N : U_N \to T$ be the pull back of $\rho_N$. For each $N$, the irreducible components of $(U_N)_\kbar$ are isomorphic to $A_\kbar$ and the restriction of $\phi_N$ to any of these components is isomorphic to $N:A_\kbar \to A_\kbar$. The component scheme $\pi_0(U_N) \to \Spec(k)$ is a torsor under $B[N]/A[N] = (B/A)[N]$. Since $x$ lifts to $U_N(\A_k)$, we have that $\pi_0(U_N)(\A_k) \ne \emptyset$. Thus $\pi_0(U_N)$ represents a class in $\Sha^1(k,B/A[N]) := \ker(\HH^1(k,B/A[N]) \to \prod_{v\in \Omega_k} \HH^1(k_v,B/A[N]))$. Compatibility of the family implies that the $\pi_0(U_N)$ give an element in $\varprojlim_N \Sha^1(k,B/A[N])$. But $\varprojlim_N \Sha^1(k,B/A[N]) = 0$ by the proof of \cite[Lemma 3.3]{GA-THassePrinciple} (the corresponding statement in the number field case is \cite[I.6.22]{MilneADT}). This means that $\pi_0(U_N)(k) \ne \emptyset$ for all $N$. In particular, $\pi_0(U_m)(k) \ne \emptyset$, and so $U_m$ has a geometrically irreducible component defined over $k$. The restriction of $\phi_m$ to this component is an $m$-covering of $T$.
	
	{\it Step 3:} Let $\phi : U \to T$ be an $m$-covering of $T$. Since $x \in T(\A_k)^{\Br_{1/2}}$, there exists a twist of $\phi$ which lifts $x$. In the number field case, this follows from the descent theory for abelian torsors, specifically in (2) of the proof of \cite[Theorem 6.1.2(a)]{Skorobogatov} stated at the top of page 115. The proof of (2), beginning on page 119 in op. cit, uses local Tate duality and the Poitou-Tate exact sequence. The proof carries through using fppf cohomology in place of \'etale cohomology in the function field case, provided one uses the local duality statement \cite[III.6.10]{MilneADT} in place of \cite[I.2.3]{MilneADT} and the Poitou-Tate sequence \cite[Theorem 5.1]{Cesnavicius} in place of \cite[I.4.20]{MilneADT}. 
	
	{\it Step 4:} Let $\phi : U \to T$ be an $m$-covering of $T$ with $x = \rho(y) \in \rho(U(\A_k))$. Note that the existence of $\phi$ is given by Steps 2 and 3. By Step 1, we have that $\phi^*\alpha = 0$. For the pairings~\eqref{eq:Br} on $U$ and $T$ we then have $0 = \langle y,\rho^*\alpha\rangle = \langle x , \alpha\rangle$. This shows that $\alpha$ is orthogonal to $x$, thus completing the proof of~\eqref{it:t1} in the statement of the theorem.
	
	We now prove~\eqref{it:t2}. Suppose $d$ is the order of $T$ in $\Sha(A)$. For any $e \ge 1$ there is a commutative diagram
	\[
		\xymatrix{
			\Sel^{de}(A)\ar@{>>}[r]  \ar[d]^{e_*} & \Sha(A)[de] \ar[d]^{e}\\
			\Sel^d(A) \ar@{>>}[r] & \Sha(A)[d]
		}
	\]
	Let $\pi : T \to A$ be a $d$-covering representing a lift of $T$ to $\Sel^d(A)$. Any lift of $\pi$ to $\Sel^{de}(A)$ is represented by a $de$-covering of $A$ which factors through $\pi$, as an $e$-covering of $T$. Also, any $e$-covering of $T$ gives a $de$-covering of $A$ by composing with $\pi$. From this and the diagram above it follows that $T \in \Sha(A)_\textup{div}$ if and only if for every $e\ge 1$, there is an $e$-covering $\phi:U \to T$ with $U(\A_k) \ne \emptyset$. Steps 2 and 3 of the proof of part~\eqref{it:t1} show that if $T(\A_k)^{\Br_{1/2}} \ne \emptyset$, then such $e$-coverings exist and so $T$ must represent an element of $\Sha(A)_\textup{div}$. Conversely, suppose $T(\A_k)^{\Br_{1/2}} = \emptyset$. By compactness there are finitely many $\alpha_1,\dots,\alpha_r \in \Br_{1/2}(T)$ such that $T(\A_k)^{\{\alpha_1,\dots,\alpha_r\}} = \emptyset$. Let $m_i$ be the integers given by Step 1 of the proof of~\eqref{it:t1} for each $\alpha_i$, and let $M$ be the product of the $m_i$. So, for any $M$-covering $\phi : U \to T$ we have $\phi^*\alpha_i = 0$ for all $i$. If $T$ is divisible by $M$ in $\Sha(A)$, then there is an adelic point $x \in T(\A_k)$ which lifts to an $M$-covering of $T$ and we arrive at a contradiction by Step 4 in the proof of~\eqref{it:t1}.
	
	We now prove~\eqref{it:t3}. Suppose $T(\A_k)^{\Br}$ is nonempty. By~\eqref{it:t2} this is equivalent to assuming $T$ lies in $\Sha(A)_\textup{div}$. If $T(k) = \emptyset$, then $\Sha(A)_\textup{div} \ne 0$ (since $T$ is a nontrivial element there) and the statement holds. If $T(k) \ne \emptyset$, then $T \simeq A$ and so it suffices to prove~\eqref{it:t3} for $T = A$. By ~\eqref{it:t1} and Lemma~\ref{lem:1/2} we have $A(\A_k)^{\Br}_\bullet = \Sel(A)$. From the exact sequence
	\[
		0 \to \varprojlim_n A(k)/nA(k)  \to \Sel(A) \to \varprojlim_n \Sha(A)[n] \to 0
	\]
	and the equality $\varprojlim_n A(k)/nA(k) = \overline{A(k)}$ given in \cite[Theorem E]{PoonenVoloch} we see that $\overline{A(k)} = A(\A_k)^{\Br}_\bullet$ if and only if $\varprojlim_n \Sha(A)[n] = 0$, which is if and only if $\Sha(A)_\textup{div} = 0$.

\end{proof}

\section{Adelic intersections for unions of cosets}\label{sec:cosets}

In this section $k$ is either a global function field or a totally imaginary number field. Equivalently, $k$ is a global field with no real primes.

For $X$ a closed subvariety of the abelian vareity $A$ over $k$ we define $X(\A_k)_\circ = \prod_{v \in \Omega^\circ_k}X(k_v)$ to be the product over the set $\Omega_k^\circ$ of nonarchimedean primes. Since all the archimedean primes are assumed to be complex and $A$ is geometrically connected, the canonical map $A(\A_k)_\bullet \to A(\A_k)_\circ$ is an isomorphism. Moreover, the map $X(\A_k)_\circ \to A(\A_k)_\circ$ induced by the inclusion $X(\A_k) \subset A(\A_k)$ is injective.

The results of this section concern the images in $A(\A_k)_\circ$ of various subsets of $A(\A_k)$. We will simplify the notation by writing $X(\A_k), \overline{X(k)}, A(\A_k), A(\A_k)^{\Br}$, etc. to also denote their images in $A(\A_k)_\circ$.

	\begin{Lemma}\label{lem:Yi}
		Let $Y = \bigcup_{j = 1}^r C_j$ be a finite union of cosets $C_j \subset A$. Then $$Y(\A_k) \cap {A(\A_k)^{\Br}} \subset \bigcup_{j = 1}^r \left(C_j(\A_k)\right).$$
	\end{Lemma}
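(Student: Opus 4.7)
The plan is to assign to each coset $C_j$ an auxiliary element of a Selmer group that detects, prime by prime, whether $(P_v)$ lies in $C_j$, and then to apply Proposition~\ref{lem:infords} to force membership in a single $C_j$.

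Starting from $(P_v) \in Y(\A_k) \cap A(\A_k)^{\Br}$, I would write $C_j = a_j + A_j$, let $\pi_j \colon A \to B_j := A/A_j$ be the quotient, and set $\bar c_j := \pi_j(a_j)$, which is a $k$-rational point of $B_j$ because $C_j$ is defined over $k$ and $\pi_j$ contracts $C_j$ to a single Galois-stable closed point. The detector is
\[
R^{(j)} := \pi_j((P_v))_\bullet - \bar c_j \in B_j(\A_k)_\bullet.
\]
Pullback along $\pi_j$ sends $\Br(B_j)$ into $\Br(A)$, so $\pi_j((P_v))$ remains in the Brauer set of $B_j$, and translation by the $k$-rational point $\bar c_j$ preserves the Brauer set; applying Corollary~\ref{cor:notranscendental} then places $R^{(j)}$ in $\Sel(B_j)$. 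The point of the construction is that for nonarchimedean $v$, $R^{(j)}|_v = 0$ exactly when $P_v \in C_j(k_v)$.

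I would then split into two cases. If some $R^{(j_0)} = 0$ in $\Sel(B_{j_0})$, the injectivity of $\Sel(B_{j_0}) \hookrightarrow B_{j_0}(\A_k)_\bullet$ (the $r=1$ case of Proposition~\ref{lem:infords}) gives $P_v \in C_{j_0}(k_v)$ for every $v \in \Omega_k^\circ$, so $(P_v) \in C_{j_0}(\A_k)$ and the lemma holds. Otherwise every $R^{(j)}$ is nonzero, and I would pool them into a common Selmer group by setting $B := B_1 \times \cdots \times B_r$ and using the coordinate embeddings $B_j \hookrightarrow B$ to produce nonzero images $\tilde R^{(j)} \in \Sel(B) = \prod_j \Sel(B_j)$. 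Proposition~\ref{lem:infords} applied to $\tilde R^{(1)}, \dots, \tilde R^{(r)}$ then produces a positive density set of primes $v$ at which all $\tilde R^{(j)}|_v$, and hence all $R^{(j)}|_v$, are nonzero; selecting any nonarchimedean $v$ in this set yields $P_v \notin C_j(k_v)$ for every $j$, contradicting $(P_v) \in Y(\A_k)$.

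The main obstacle I anticipate is the mismatch that the natural Selmer detectors live in distinct groups $\Sel(B_j)$, while the multi-element form of Proposition~\ref{lem:infords} demands a common ambient Selmer group. Embedding into $\Sel(B_1 \times \cdots \times B_r)$ via coordinate inclusions neatly bypasses this, after which the argument becomes a clean translation between Selmer membership and local coset membership through the quotients $A \to A/A_j$.
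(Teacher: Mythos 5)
Your argument is correct and is essentially the paper's own proof: the paper likewise passes to the quotients $\phi_j\colon A\to B_j=A/A_j$, forms the translated images $P_j=\psi_j(\phi_j(Q))-x_j$ directly inside $\Sel(B)$ for $B=\prod_j B_j$ (your coordinate-embedding step, done at the outset), and applies Proposition~\ref{lem:infords} together with Corollary~\ref{cor:notranscendental} to force some $P_j=0$, exactly as in your case analysis. The only differences are cosmetic (e.g.\ your detour through $\Br$-functoriality on each $B_j$ before embedding, and the unnecessary appeal to injectivity of $\Sel(B_{j_0})\to B_{j_0}(\A_k)_\bullet$ in the first case), so no changes are needed.
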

	
	\begin{proof}
		Let $Q \in Y(\A_k) \cap {A(\A_k)^{\Br}}$. Then $Q \in \Sel(A)$ by Corollary~\ref{cor:notranscendental}. Suppose $C_j = a_j + A_j$ with $a_j \in A(\kbar)$ and subabelian varieties $A_j \subset A$. Consider the quotient maps $\phi_j : A \to B_j := A/A_j$, and let $B = \prod B_j$ with the canonical maps $\psi_j : B_j \to B$. Then $\psi_j(\phi_j(C_j))$ is a point $x_j \in B(k)$. Let $Q_j = \psi_j(\phi_j(Q)) \in B(\A_k)$. Then $Q_j \in \Sel(B)$. Let $P_j = Q_j - x_j \in {\Sel}(B)$. The assumption that $Q \in Y(\A_k) = (\bigcup C_j)(\A_k)$ implies that for every prime $v \in \Omega_k$, there is some $j\in \{1,\dots,r\}$ such that the image of $P_j$ under the projection $B(\A_k) \to B(k_v)$ is $0$. By Proposition~\ref{lem:infords}, this implies that there exists some $j \in \{1,\dots,r\}$ such that $P_j = 0$ in $\Sel(B)$. For this $j$ we have $Q_j = x_j$, and it follows that the $v$-adic component of $Q$ lies on $C_j$ for all nonarchimedean primes $v$. This means that (the image of) $Q$ lies in (the image of) $C_j(\A_k)$ (in $A(\A_k)_\circ$).
	\end{proof}

\begin{Lemma}\label{lem:Y'}
	Let $K/k$ be a finite Galois extension contained in $k^\sep$. Suppose $Y \subset A_K$ is a subvariety such that the reduced subschemes of the irreducible components of $Y_{\kbar}$ are cosets in $A_{\kbar}$. Then there exists a subvariety $Y' \subset A$ such that
	\begin{enumerate}
		\item $Y'$ is a finite union of cosets in $A$,
		\item $Y'_K \subset Y$ as subvarieties of $A_K$, and
		\item The intersection of $Y(\A_K)$ with the image of $A(\A_k)^{\Br}$ under $A(\A_k) \to A_K(\A_K)$ is contained in the image of $Y'(\A_k) \to Y'_K(\A_K)$.
	\end{enumerate}
\end{Lemma}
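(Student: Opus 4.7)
The strategy is to apply Lemma~\ref{lem:Yi} over a suitable finite Galois extension of $k$ and then invoke Galois descent. First I would enlarge $K$ to a larger finite Galois extension $L/k$ over which each $\kbar$-irreducible component of $Y_{\kbar}$ is already defined. Writing each component as $a_j + B_j$ with $B_j \subset A_{\kbar}$ a sub-abelian variety and $a_j \in A(\kbar)$, I can arrange this by taking $L$ to contain a field of definition for each $B_j$ and for the image of each $a_j$ in $(A/B_j)(\kbar)$, then passing to the Galois closure. After this enlargement, $Y_L = \bigcup_{j=1}^r C_j$ decomposes into $L$-cosets $C_j = a_j + A_j$. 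Since $Y'_L \subset Y_L$ is equivalent to $Y'_K \subset Y$ by faithfully flat descent, and the adelic condition over $K$ is implied by the corresponding condition over $L$, it suffices to prove the lemma with $K$ replaced by $L$.

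Set $G = \Gal(K/k)$. For each index $j$, define $C'_j := \bigcap_{\sigma \in G} \sigma(C_j) \subset A_K$. An intersection of cosets in an abelian variety is either empty or again a coset (translate to a common point and intersect the underlying sub-abelian varieties). When nonempty, $C'_j$ is $G$-invariant by construction, so Galois descent produces a $k$-subvariety $C''_j \subset A$ with $(C''_j)_K = C'_j$, which is itself a coset over $k$ because $(C''_j)_{\kbar}$ is the translate of the $\kbar$-base change of the $k$-sub-abelian variety underlying $C'_j$. Take $Y' := \bigcup_j C''_j$ (union over those $j$ with $C'_j$ nonempty). Then (1) holds by construction, and $Y'_K = \bigcup_j C'_j \subset \bigcup_j C_j = Y$ gives (2).

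For (3), let $x \in A(\A_k)^{\Br}$ with $x_K \in Y(\A_K)$. The base change map sends the image of $A(\A_k)^{\Br}$ in $A_K(\A_K)$ into $A_K(\A_K)^{\Br}$; this follows from Corollary~\ref{cor:notranscendental} together with functoriality of the Selmer group under base change, or directly from functoriality of the Brauer pairing along $A_K \to A$. Applying Lemma~\ref{lem:Yi} to the abelian variety $A_K$ and the cosets $C_1,\dots,C_r$ over $K$ then produces a single index $j$ with $x_K \in C_j(\A_K)$. Because $x$ is a $k$-adelic point, $x_K$ is $G$-invariant in $A_K(\A_K)$, so $x_K = \sigma(x_K) \in \sigma(C_j)(\A_K)$ for every $\sigma \in G$, and hence $x_K \in C'_j(\A_K)$. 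Since $C'_j$ descends to the $k$-coset $C''_j$ and $x$ is $G$-invariant, this $\A_K$-point descends to $x \in C''_j(\A_k) \subset Y'(\A_k)$. The main subtleties are the setup in the first paragraph and the verification that the descended $C''_j$ is a coset in the sense used in the paper; the Brauer-Manin hypothesis on $x$ enters only through the single application of Lemma~\ref{lem:Yi}.
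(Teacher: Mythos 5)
Your overall strategy (intersect Galois conjugates of the cosets, descend to $k$, use that a $k$-adelic point is Galois-invariant, and apply Lemma~\ref{lem:Yi} after base change) is the same as the paper's, and the reduction to a larger Galois extension $L$, the Brauer functoriality remark, and the final descent of the adelic point to a $k$-subscheme are fine. But there are two genuine gaps. The first is the claim that ``an intersection of cosets in an abelian variety is either empty or again a coset.'' This is false: the intersection of the underlying abelian subvarieties need only be a subgroup scheme, which can be disconnected. For instance, in $E\times E$ the diagonal and the antidiagonal meet in $E[2]$, so two of their translates can meet in several points; in general $\bigcap_{\sigma}\sigma(C_j)$ is a finite union of translates of a common abelian subvariety, not a single coset. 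Hence your descended variety $C''_j$ need not be a coset over $k$, and conclusion (1) fails for your $Y'$; if instead you shrink $Y'$ to the geometrically irreducible components of the $\bigcap_{\sigma}\sigma(C_j)$ (which is what the paper does), then conclusion (3) requires a further argument that you do not supply: the paper reapplies the Lemma~\ref{lem:Yi} argument with $Z_j=\bigcap_{\sigma}\sigma(C_j)$ in place of $Y$, so that the Galois-invariant adelic point lies on the intersection of a full Galois orbit of components of $(Z_j)_{k^\sep}$; since distinct translates of one and the same abelian subvariety are disjoint, orbits of size greater than one contribute nothing, and the point must lie on a geometrically irreducible, hence $k$-defined, component.

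The second gap is in your opening reduction. In characteristic $p$ the cosets occurring as reduced subschemes of the components of $Y_{\kbar}$ need not be definable over any finite \emph{separable} extension of $k$: by Liu's result the abelian subvarieties $B_j$ are defined over $k^\sep$, but the translation points $a_j$ need not be (already a closed point of $A$ with purely inseparable residue field gives such a $Y$), so there may be no finite Galois $L$ with $Y_L$ a union of $L$-cosets, and passing to a ``Galois closure'' of an inseparable field of definition is not available. Moreover $Y$ need not be (geometrically) reduced, while $Y(\A_K)$ only sees field-valued points. The paper deals with both issues before choosing $L$: it replaces $Y$ by the Zariski closure $Y_0$ of $Y(k^\sep)$ and proves $Y(\A_K)=Y_0(\A_K)$ using Greenberg's approximation theorem together with separability of the Henselizations $K^h_w/K$; only then can the $a_j$ be taken in $A(k^\sep)$ and a finite Galois $L/k$ be chosen. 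Since the main theorems are over global function fields, this step cannot be skipped; your argument as written is only complete in characteristic zero, and even there subject to the first gap above.
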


\begin{proof}
	Let $Y_0$ be the reduced subscheme of the Zariski closure of $Y(k^\sep)$. Then $Y_0$ is reduced and $Y_0(k^\sep)$ is dense in $Y_0$, so $Y_0$ is geometrically reduced. Let $w$ be a nonarchimedean prime of $K$ and let $K_w^h$ denote the Henselisation and $K_w$ denote the completion. By Greenberg's approximation theorem \cite{Greenberg}, we have that $Y(K_w^h)$ is dense in $Y(K_w)$. By~\cite[Lemma 3.1]{PoonenVoloch}, $K_w^h/K$ is separable, so $Y(K_w^h) = Y_0(K_w^h)$. We conclude that $Y(K_w) = Y_0(K_w)$ and $Y(\A_K) = Y_0(\A_K)$. By assumption $(Y_0)_\kbar = \bigcup C_j$ with $C_j = a_j + A_j$ for some $a_j \in A(\kbar)$ and abelian subvarieties $A_j \subset A_{\kbar}$. Every abelian subvariety of $A_{\kbar}$ can be defined over $k^\sep$ (See \cite[Corollary 6]{Liu}). Hence, we may assume that the $a_j$ are in $A(k^\sep)$, and that $A_j$ and $C_j$ are subvarieties of $A_{k^\sep}$.
	
	Let $L/k$ be a finite Galois extension containing $K$ such that, for all $j$, $A_j$ is defined over $L$ and $a_j \in A(L)$. For each $j$, let $Z_j = \bigcap_{\sigma \in \Gal(L/k)} \sigma(C_j)$. Then $Z_j$ is defined over $k$ and the irreducible components of $(Z_j)_{k^\sep}$ are translates of the identity component $B_j$ of $\bigcap_{\sigma}\sigma(A_j)$, which is an abelian subvariety of $A$. Let $Y'$ be the union of the irreducible components of $\bigcup_j Z_j$ that are geometrically irreducible. Then $Y'$ is a finite union of cosets in $A$ and it is clear that $Y'_K \subset Y$.
	
	Let $y \in Y(\A_K) \cap A(\A_k)^{\Br}$, the intersection taking place in $A(\A_K)$. We need to show $y \in Y'(\A_k)$. The image of $A(\A_k)^{\Br} \to A(\A_L)$ is contained in $A(\A_L)^{\Br}$. So by Lemma~\ref{lem:Yi} applied to $Y_L$ we have that $y \in C_j(\A_L)$ for some $j$. Since $y \in A(\A_k) = A(\A_L)^{\Gal(L/k)}$ (the equality holds by~\cite[Lemma 3.2]{PoonenVoloch}), we must have that $y \in Z_j(\A_k) \cap A(\A_k)^{\Br}$. Repeating the argument with $Z_j$ in place of $Y$ we find that $y$ lies on the intersection over a Galois orbit of irreducible components of $(Z_j)_{k^\sep}$. Since all of the irreducible components of $(Z_j)_{k^\sep}$ are translates of the same abelian subvariety $B_j$ (which is defined over $k$), the intersection over any Galois orbit of size greater than $1$ is empty. Hence $y$ is an adelic point on some irreducible component of $Z_j$ which is geometrically irreducible. So $y \in Y'(\A_k)$ as required.
\end{proof}

The preceding lemmas allow us to recover the following result proved by Stoll in the number field case \cite[Prop. 3.6]{Stoll} and by Poonen-Voloch in the function field case  \cite[Prop. 5.3]{PoonenVoloch}. The latter required an extra hypothesis on the $p$-primary torsion subgroup of $A$. This extra hypothesis can be removed using the result of R\"ossler, Lemma~\ref{lem:Rossler1}. See \cite[Prop. 3.1]{CV3}.
\begin{Corollary}\label{cor:zeroD}
	If $Y$ is a finite subscheme of $A$ then $Y(\A_k) \cap A(\A_k)^{\Br} = Y(k)$.
\end{Corollary}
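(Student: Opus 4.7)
The plan is to reduce the statement to Lemma~\ref{lem:Y'} (with $K = k$) and use the fact that $0$-dimensional cosets defined over $k$ that are geometrically irreducible are simply $k$-rational points.

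First I would dispense with the easy inclusion: $Y(k) \subset Y(\A_k) \cap A(\A_k)^{\Br}$ holds because any $k$-rational point lies in $Y(\A_k)$ and pairs trivially with every element of $\Br(A)$ by global reciprocity. So the content is the reverse inclusion $Y(\A_k) \cap A(\A_k)^{\Br} \subset Y(k)$.

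For this, I would apply Lemma~\ref{lem:Y'} with $K = k$. The hypothesis of that lemma is satisfied trivially: since $Y$ is a finite subscheme of $A$, the reduced subschemes of the irreducible components of $Y_{\kbar}$ are single closed points of $A_{\kbar}$, which are $0$-dimensional cosets in $A_\kbar$. The lemma then supplies a subvariety $Y' \subset A$ that is a finite union of cosets of $A$, with $Y' \subset Y$, and such that
\[
	Y(\A_k) \cap A(\A_k)^{\Br} \subset Y'(\A_k)\,.
\]

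Since $Y' \subset Y$ and $Y$ is finite, $Y'$ is itself a finite scheme. Thus every coset comprising $Y'$ must be a $0$-dimensional coset; being defined over $k$ and geometrically irreducible (by the construction in the proof of Lemma~\ref{lem:Y'}), each such coset is a single $k$-rational point of $A$. Consequently $Y'(\A_k) = Y'(k) \subset Y(k)$, and combining with the previous display yields $Y(\A_k) \cap A(\A_k)^{\Br} \subset Y(k)$, which finishes the proof. There is no real obstacle here; the work has been done in Lemma~\ref{lem:Yi} and Lemma~\ref{lem:Y'}, and the corollary is essentially the degenerate $0$-dimensional specialization of those lemmas.
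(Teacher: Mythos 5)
Your reduction via Lemma~\ref{lem:Y'} with $K = k$ is fine, and it matches the paper up to that point: $Y'$ is a finite union of geometrically irreducible $0$-dimensional cosets over $k$, i.e.\ a finite set of $k$-rational points, and $Y(\A_k) \cap A(\A_k)^{\Br} \subset Y'(\A_k)$. The gap is the final step, where you assert $Y'(\A_k) = Y'(k)$. This is false whenever $Y'$ consists of more than one point: writing $Y' = \{y_1,\dots,y_m\}$ with each $y_j \in A(k)$, the set $Y'(\A_k) = \prod_v Y'(k_v)$ contains every adelic point that equals $y_1$ at some places and $y_2$ at others, so it is (uncountably) larger than the diagonal image $Y'(k)$ as soon as $m \ge 2$. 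This is exactly the phenomenon the paper flags in the introduction, that adelic points of a union are much bigger than the union of adelic point sets of the components; ruling out such ``mixed'' adelic points is the entire content of the corollary, so your argument assumes away the hard part.

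The missing ingredient is Lemma~\ref{lem:Yi} (which rests on Proposition~\ref{lem:infords}): you keep the intersection with $A(\A_k)^{\Br}$ through the reduction, so
\[
Y(\A_k) \cap A(\A_k)^{\Br} \subset Y'(\A_k) \cap A(\A_k)^{\Br} \subset \bigcup_{j} \bigl(y_j(\A_k)\bigr) = Y'(k) \subset Y(k)\,,
\]
where the middle inclusion is Lemma~\ref{lem:Yi} applied to the union of the zero-dimensional cosets $y_j$. That step is genuinely nontrivial: it translates into showing that if the classes $Q - y_j$ are all nonzero in $\Sel(A)$, then there is a prime at which all of them are simultaneously locally nonzero, which is Proposition~\ref{lem:infords}. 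You cite Lemma~\ref{lem:Yi} in passing at the end but never actually use it; once you replace the false identity $Y'(\A_k) = Y'(k)$ with this application of Lemma~\ref{lem:Yi}, your argument coincides with the paper's proof.
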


\begin{proof}
	In this case the reduced subscheme of $Y_{\kbar}$ is a finite union cosets of the trivial subgroup of $A_{\kbar}$. The $Y' \subset Y$ supplied by Lemma~\ref{lem:Y'} is $Y' = \bigcup_{y_j \in Y(k)} y_j$. By Lemma~\ref{lem:Yi} we have \[ Y(\A_k) \cap A(A_k)^{\Br} \subset Y'(\A_k) \cap A(\A_k)^{\Br} \subset \bigcup_{y_j \in Y(k)} (y_j(\A_k)) = Y(k).\]
\end{proof}

\subsection{Adelic intersections for individual cosets}

\begin{Lemma}\label{lem:T}
	Let $C \subset A$ be a coset. Then $C(\A_k) \cap \overline{A(k)} = \overline{C(k)}$.
\end{Lemma}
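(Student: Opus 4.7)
The plan is to reduce the lemma to exactness of profinite completion on Mordell--Weil groups, using the quotient $\pi\colon A \to B := A/A'$ associated to $C = a + A'$ with $a \in A(\kbar)$. Since $C$ is defined over $k$ and is a single $A'$-coset, the image $\pi(C) = \pi(a)$ is a single rational point $b \in B(k)$, and $C$ is the scheme-theoretic fiber of $\pi$ over $b$. Hence $C(\A_k) = \{Q \in A(\A_k) : \pi(Q) = b\}$, and computing $C(\A_k) \cap \overline{A(k)}$ amounts to understanding the fiber of $\pi\colon \overline{A(k)} \to B(\A_k)_\bullet$ over $b$.

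The first step is to establish a short exact sequence of topological groups
\[
0 \to \overline{A'(k)} \to \overline{A(k)} \to \overline{\pi(A(k))} \to 0,
\]
where $\overline{\pi(A(k))}$ denotes the closure of $\pi(A(k))$ in $B(\A_k)_\bullet$. This combines three ingredients: the identification $\overline{G(k)} \cong G(k)^\wedge$ with the profinite completion cited at the start of the paper; flatness of $\hat{\Z}$ over $\Z$, which makes profinite completion an exact functor applied to $0 \to A'(k) \to A(k) \to \pi(A(k)) \to 0$ (a short exact sequence of finitely generated abelian groups by Mordell--Weil); and the injectivity of $A'(\A_k) \hookrightarrow A(\A_k)$ following from the hypothesis that $k$ has no real primes (so that $A(\A_k)_\bullet = A(\A_k)_\circ$) together with $A' \hookrightarrow A$ being a closed immersion.

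The second step is to observe that $b \in \overline{\pi(A(k))}$ if and only if $C(k) \neq \emptyset$. For this, I would invoke the elementary fact that if $H \subset G$ is a subgroup of a finitely generated abelian group, then $H^\wedge \cap G = H$ inside $G^\wedge$, which holds because $G/H$ is finitely generated and hence embeds in its profinite completion. Applied to $\pi(A(k)) \subset B(k)$, this yields $\overline{\pi(A(k))} \cap B(k) = \pi(A(k))$, and a preimage of $b$ in $A(k)$ is exactly a rational point of $C$. Casework then finishes the proof: if $C(k) = \emptyset$, then $b \notin \overline{\pi(A(k))}$, the fiber over $b$ is empty, and both sides of the desired equality vanish; if $C(k) \neq \emptyset$, pick any $c_0 \in C(k)$, and the short exact sequence of the first step identifies the fiber over $b$ inside $\overline{A(k)}$ with $c_0 + \overline{A'(k)} = \overline{c_0 + A'(k)} = \overline{C(k)}$.

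I expect the main technical point to be verifying compatibility of the abstract short exact sequence of profinite completions with the corresponding sequence of topological closures in the ambient adelic spaces, i.e., checking that $\overline{G(k)} \cong G(k)^\wedge$ is functorial for the morphisms $A' \hookrightarrow A$ and $A \to B$ in a way that transports the flatness-induced sequence to the desired sequence of subgroups of $A(\A_k)_\bullet$ and $B(\A_k)_\bullet$. Once this naturality is secured, the remainder of the argument is formal manipulation of finitely generated abelian groups.
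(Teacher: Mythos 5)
Your proof is correct, but it takes a genuinely different route from the paper's. The paper argues via Poincar\'e reducibility: it chooses a complementary abelian subvariety $A''$ with an isogeny $A' \times A'' \to A$ through which multiplication by $n$ factors, writes $c = na + P$ with $P \in A(k)$ using openness of $n\overline{A(k)}$, pulls $C$ back to $D \subset A' \times A''$, projects to a finite subscheme $Z \subset A''$, and then invokes Corollary~\ref{cor:zeroD} to see that the $A''$-coordinate is rational; that corollary rests on Lemmas~\ref{lem:Y'} and~\ref{lem:Yi} and hence on the Selmer-theoretic Proposition~\ref{lem:infords}. You instead work directly with the quotient $\pi \colon A \to B = A/A'$ and replace that arithmetic input by pure algebra: exactness of $-\otimes\hat{\Z}$ on finitely generated groups and the fact that $(H\otimes\hat{\Z}) \cap G = H$ for $H \subset G$ finitely generated, combined only with the Serre--Milne identification $\overline{G(k)} \simeq G(k)\otimes\hat{\Z}$ of \cite{PoonenVoloch}*{Proposition 4.1} applied to $A$, $A'$ and $B$ (its functoriality, which you flag as the technical point, is indeed routine, as is the use of compactness to get $\pi(\overline{A(k)}) = \overline{\pi(A(k))}$ and the no-real-primes reduction to $A(\A_k)_\circ$). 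What your approach buys is a more elementary, self-contained proof that avoids Proposition~\ref{lem:infords} and Corollary~\ref{cor:zeroD} altogether, the key rationality step being reduced to the residual finiteness of the finitely generated group $B(k)/\pi(A(k))$; what the paper's approach buys is reuse of machinery it needs anyway for unions of cosets, without introducing the quotient $A/A'$. Two small points you should make explicit: that $\pi(C)$ really is a $k$-rational point $b$ (it is the image of a proper, geometrically connected and geometrically reduced $k$-variety under a geometrically constant morphism, so the morphism factors through $\Spec k$), and that $C = \pi^{-1}(b)$ as $k$-subvarieties (the fibre is geometrically reduced since $\pi$ is smooth, being an $A'$-torsor); both are at the same level of detail as the paper's analogous assertion in the proof of Lemma~\ref{lem:Yi}.
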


\begin{proof}
	Suppose $C = a + A'$ is a coset of $A' \subset A$. By Poincar\'e reducibility, there is an abelian subvariety $A'' \subset A$ such that $A' \cap A''$ is finite and the sum map $\phi : A' \times A'' \to A$ is an isogeny. To ease notation let $B = A' \times A''$. For some integer $n \ge 2$ we have that multiplication by $n$ on $A$ factors as $\phi \circ \psi$ for some isogeny $\psi : A \to B$. 
	
	The containment $\overline{C(k)} \subset C(\A_k) \cap \overline{A(k)}$ is obvious. For the reverse containment, suppose $c \in C(\A_k) \cap \overline{A(k)}$. By \cite[Proposition 4.1]{PoonenVoloch} which is based on results of Serre \cite{SerreII} and Milne \cite{MilneCongruence}, we have that $\overline{A(k)} \simeq A(k)\otimes\hat{\Z}$ as topological groups. Since $A(k)$ is finitely generated, the subgroup $n(A(k)\otimes\hat{\Z}) \subset A(k)\otimes\hat{\Z}$ is of finite index and, hence, open. Since $A(k)$ is dense in $A(k) \otimes \hat{\Z}$, each coset of $n(A(k)\otimes\hat{\Z})$ contains an element of $A(k)$. It follows that $c = na + P$ for some $a \in \overline{A(k)}$ and $P \in A(k)$. 
	
	Let $b = \psi(a) \in \overline{B(k)}$, so that $c = \phi(b) + P$. Let $D \subset B$ be the pullback of $C \subset A$ under the morphism $B \to A$ sending a point $z$ to $\phi(z) + P$. Then $b \in D(\A_k) \cap \overline{B(k)}$. Since $c = \phi(b) + P$, it will be enough to show that $b \in \overline{D(k)}$.
	
	Let $Z \subset A''$ be the image of $D$ under the quotient $B = A'\times A'' \to A''$ and let $z \in Z(\A_k)$ be the image of $b$. The reduced subschemes of the irreducible components of $D_{\kbar}$ are translates of $A'_{\kbar}$, so $Z$ is finite. Then $Z(\A_k) \cap \overline{A''(k)} = Z(k)$ by Corollary \ref{cor:zeroD}. Thus $z\in A''(k)$. It follows that
	\[ b \in (A'\times\{z\})(\A_k) \cap \overline{B(k)} = \overline{A'(k)} \times \{z\} \subset \overline{D(k)}\,. \]
\end{proof}

We now prove the analogue of the previous lemma for the Brauer set.
	
\begin{Lemma}\label{lem:BrSel}
	Let $C \subset A$ be a coset. Then $C(\A_k)^{\Br} = C(\A_k) \cap A(\A_k)^{\Br}\,.$
\end{Lemma}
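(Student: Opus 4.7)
The plan is, for the easy inclusion $C(\A_k)^{\Br}\subseteq C(\A_k)\cap A(\A_k)^{\Br}$, to invoke functoriality of the Brauer--Manin pairing under the closed immersion $\iota\colon C\hookrightarrow A$: for any $\alpha\in\Br(A)$ and $x\in C(\A_k)$ one has $\langle x,\alpha\rangle_A=\langle x,\iota^*\alpha\rangle_C$, so orthogonality to $\Br(C)$ implies orthogonality to $\Br(A)$.

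For the reverse inclusion I will reduce to the abelian-subvariety case. By Theorem~\ref{thm:notranscendental}(i) and Corollary~\ref{cor:notranscendental} it will suffice to show $C(\A_k)\cap\Sel(A)\subseteq C(\A_k)^{\Br_{1/2}}$. Both sides are either empty or cosets under translation by appropriate subgroups of $A'(\A_k)_\bullet$: the set $C(\A_k)\cap\Sel(A)$, when nonempty, is a torsor under $A'(\A_k)\cap\Sel(A)$ (because $\Sel(A)$ is a subgroup of $A(\A_k)_\bullet$), while $C(\A_k)^{\Br_{1/2}}$, when nonempty, is a torsor under $\Sel(A')$ via the $A'$-torsor structure on $C$ combined with Lemma~\ref{lem:1/2}. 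By the easy direction the two sets share a point once both are nonempty, and equality then reduces to the subgroup identity
\[
A'(\A_k)\cap\Sel(A)=\Sel(A').
\]

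To establish this identity, I plan to use the Kummer sequences associated with $0\to A'\to A\to E:=A/A'\to 0$ (treating the $p$-primary part in positive characteristic via Lemma~\ref{lem:Rossler1}). At each finite level one obtains $\ker(\Sel^m(A)\to\Sel^m(E))=\operatorname{image}(\Sel^m(A')\to\Sel^m(A))$, and passing to the inverse limit (the Mittag--Leffler condition holds since the kernels of $\Sel^m(A')\to\Sel^m(A)$ are finite) yields the analogous identification for the profinite Selmer groups. For $s\in A'(\A_k)\cap\Sel(A)$, its image in $E(\A_k)$ vanishes, so its image in $\Sel(E)$ is zero, and hence $s$ lifts to some $t\in\Sel(A')$; the injectivity of $A'(\A_k)_\bullet\hookrightarrow A(\A_k)_\bullet$, valid since $k$ has no real primes as recalled at the start of this section, will force $t=s\in\Sel(A')$.

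The main obstacle will be the emptiness case: showing that $C(\A_k)\cap\Sel(A)\neq\emptyset$ forces $C(\A_k)^{\Br_{1/2}}\neq\emptyset$, equivalently (by Theorem~\ref{thm:notranscendental}(ii)) that $[C]\in\Sha(A')_\textup{div}$. Given $x\in C(\A_k)\cap\Sel(A)$, for each $m$ there is a compatible lift of $x$ to an $m$-covering $\pi_m\colon V_m\to A$; the component scheme $T_m:=\pi_0(\pi_m^{-1}(C))$ is naturally a torsor under the finite $k$-group scheme $A[m]/A'[m]\hookrightarrow E[m]$, and the adelic lift of $x$ forces $[T_m]\in\Sha^1(k,A[m]/A'[m])$. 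Appealing to $\varprojlim_m\Sha^1(k,A[m]/A'[m])=0$, as in Step~2 of the proof of Theorem~\ref{thm:notranscendental} via \cite[I.6.22]{MilneADT} and \cite[Lemma 3.3]{GA-THassePrinciple}, should force $T_m$ to be trivial, producing geometrically irreducible $k$-rational components of $\pi_m^{-1}(C)$ that are candidate $m$-coverings of $C$. The delicate remaining point, which I expect to be the principal technical difficulty, will be verifying that at least one such $k$-rational component actually carries an adelic point; I anticipate this will require either a lift-adjustment argument using the action of $A[m](\A_k)$ together with local duality, or an application of Proposition~\ref{lem:infords} in the spirit of Lemma~\ref{lem:Yi}.
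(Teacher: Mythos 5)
Your easy inclusion is fine, but the reverse inclusion has genuine gaps, and the central one sits exactly where you placed the weight of the argument. Your reduction rests on the ``subgroup identity'' $A'(\A_k)\cap\Sel(A)=\Sel(A')$, which, via Theorem~\ref{thm:notranscendental} and Corollary~\ref{cor:notranscendental}, is nothing other than the special case $C=A'$ of the lemma itself; the Kummer-sequence argument you propose for it does not establish it. The finite-level claim $\ker(\Sel^m(A)\to\Sel^m(E))=\operatorname{im}(\Sel^m(A')\to\Sel^m(A))$ is unjustified: if $s\in\Sel^m(A)$ dies in $\HH^1(k,E[m])$, then a lift $c\in\HH^1(k,A'[m])$ satisfies only relaxed local conditions --- at each place the image of $c_v$ in $\HH^1(k_v,A')$ lies in the image of the connecting map $E(k_v)\to\HH^1(k_v,A')$ but need not vanish --- and the only global adjustments available are by the image of $E[m](k)$, so $c$ need not lie in $\Sel^m(A')$. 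A Mittag--Leffler argument cannot repair a failure of exactness at each finite level. The identity is genuinely a global duality statement, and it is precisely where the hypothesis that $k$ has no real primes must do work: the paper proves the lemma by showing that $\iota^*\colon\Br_{1/2}(A)/\Br_0(A)\simeq\HH^1(k,A^\vee)\to\HH^1(k,A'^\vee)\simeq\Br_{1/2}(C)/\Br_0(C)$ is surjective, using the sequence $0\to A''^\vee\to A^\vee\to A'^\vee\to 0$ (with $A''=A/A'$) together with the fact that $\HH^2(k,A''^\vee)$ injects into $\bigoplus_{v\ \mathrm{real}}\HH^2(k_v,A''^\vee)=0$ \cite{MilneADT}*{I.6.26}, \cite{GA-THassePrinciple} in the function field case. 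That surjectivity gives $C(\A_k)^{\Br_{1/2}}=C(\A_k)^{\iota^*\Br_{1/2}(A)}=C(\A_k)\cap A(\A_k)^{\Br_{1/2}}$ directly and pointwise, with no Selmer sequences, no torsor structure on the Brauer set, and no nonemptiness dichotomy. Your route invokes the absence of real primes only for injectivity of $A'(\A_k)_\bullet\to A(\A_k)_\bullet$, which is not where the difficulty lies.

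Two further steps are also not established. First, the claim that $C(\A_k)^{\Br_{1/2}}$, when nonempty, is a torsor under $\Sel(A')$ requires compatibility of the Brauer--Manin pairing with the $A'$-action on the torsor $C$; this is exactly the kind of verification the paper flags as nontrivial in Section~\ref{sec:reals}, and you do not supply it. Second, the emptiness case is open by your own admission: triviality of the component torsors $\pi_0(\pi_m^{-1}(C))$ only produces a geometrically irreducible $k$-rational component of $\pi_m^{-1}(C)$, not one carrying the adelic lift of $x$, and closing that gap would amount to redoing Step~3 of the proof of Theorem~\ref{thm:notranscendental} (a Poitou--Tate twisting argument) for coverings of the torsor $C$ --- machinery the paper's short duality argument avoids entirely.
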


\begin{proof}
	Note that $C$ and $A$ are both torsors under abelian varieties. So by Theorem~\ref{thm:notranscendental} it suffices to show that $C(\A_k)^{\Br_{1/2}} = C(\A_k) \cap A(\A_k)^{\Br_{1/2}}$.
	
		The inclusion $\iota : C \to A$ induces a commutative diagram
		\[
			\xymatrix{ \HH^1(k,\Pic^0(C_{k^\sep})) \ar[r]  & \Br(C)/\Br_0(C) \\
			\HH^1(k,\Pic^0(A_{k^\sep})) \ar[r] \ar[u]^{\iota^*} & \Br(C)/\Br_0(A) \ar[u]^{\iota^*}
			}
		\]
		from which it follows that $\iota$ gives a map $\iota^*:\Br_{1/2}(A) \to \Br_{1/2}(C)$. By functoriality of the pairing~\eqref{eq:Br} we have that $C(\A_k)^{\Br_{1/2}} \subset C(\A_k)^{\iota^*\Br_{1/2}(A)} = C(\A_k) \cap A(\A_k)^{\Br_{1/2}}$. Thus $C(\A_k)^{\Br_{1/2}} \subset A(\A_k)^{\Br_{1/2}}$. For the reverse containment we must show that the image of $C(\A_k)^{\Br_{1/2}}$ in $A(\A_k)$ contains $C(\A_k) \cap A(\A_k)^{\Br_{1/2}}$.
		
		Suppose $C = a + A'$ with $A' \subset A$ an abelian subvariety. Then $\Pic^0_A = A^\vee$ and $\Pic^0_C = A'^\vee$ are the dual abelian varieties and there is an exact sequence $0 \to A''^\vee \to A^\vee \to A'^\vee \to 0$, where $A'' = A/A'$. Note that $\HH^1(k,A'^\vee) = \HH^1(k,\Pic^0(C_{k^\sep}))$. Galois cohomology gives a diagram with exact rows,
	\begin{equation}\label{eq:real}
		\xymatrix{
			\HH^1(k,A^\vee) \ar[d]\ar[r]& \HH^1(k,A'^\vee) \ar[r]\ar[d] & \HH^2(k,A''^\vee) \ar[r]\ar[d] & \HH^2(k,A^\vee) \ar[d] \\
			\displaystyle\bigoplus_{v \in \Omega_k} \HH^1(k_v,A^\vee) \ar[r] & \displaystyle\bigoplus_{v \in \Omega_k} \HH^1(k_v,A'^\vee) \ar[r]& \displaystyle\bigoplus_{v \in \Omega_k} \HH^2(k_v,A''^\vee) \ar[r] & \displaystyle\bigoplus_{v \in \Omega_k} \HH^2(k_v,A^\vee)
			}
	\end{equation}
	For complex and nonarchimedean primes $v$, the groups $\HH^2(k_v,A''^\vee)$ are $0$ by \cite[I.3.4, I.3.6, and III.7.8]{MilneADT}. Moreover, $\HH^2(k,A''^\vee) \simeq \bigoplus_{v \textup{ real}}\HH^2(k_v,A''^\vee)$ by \cite[Theorem I.6.26]{MilneADT} in the number field case, and in the function field case by \cite[Lemma 3.3]{GA-THassePrinciple}. Since we assume $k$ has no real primes, this implies that the map
	\[
		\iota^* : \Br_{1/2}(A)/\Br_0(A) \simeq \HH^1(k,A^\vee) \to \HH^1(k,A'^\vee) \simeq \Br_{1/2}(C)/\Br_0(C)
	\]
	is surjective and, hence, that $C(\A_k)^{\Br_{1/2}} = C(\A_k)^{\iota^*\Br_{1/2}(A)}$ completing the proof. 
\end{proof}

\subsection{Remarks in the case that $k$ is a number field with real primes}\label{sec:reals}

	We expect the results of this section also hold for number fields with real primes, if one considers all of the intersections and containments as taking place in $A(\A_k)_\bullet$. However, additional arguments would be required to prove this. We outline here the points at which the our proofs break down in the presence of real primes.

	\subsubsection*{Lemma~\ref{lem:Yi}} In the proof we showed that the adelic point $Q \in Y(\A_k) \cap A(\A_k)^{\Br}$ has all of its nonarchimedean $v$-adic components lying on some coset $C \subset A$. From this we wish to conclude that there is some $(y_v) \in C(\A_k)$ such that $Q$ and $(y_v)$ have the same image in $A(\A_k)_\bullet$. That is, for each archimedean prime $w$, we wish to find $y_w \in C(k_w)$ such that $y_w$ and $Q$ have the same image in $A(k_w)_\bullet$. For complex primes there is no issue because $C(\C) \ne \emptyset$ and $A(\C)_\bullet$ is a singleton. For real primes, however, we do not even know that $C(\R) \ne \emptyset$.
	
	\subsubsection*{Lemma~\ref{lem:Y'}} In the proof of we used that $A(\A_L)^{\Gal(L/k)} = A(\A_k)$ for a finite Galois extension $L/k$ to show that the adelic point $y \in Y(\A_L) \cap A(\A_k)^{\Br}$ in $A(\A_L)$ lies in the image of the map $Y'(\A_k) \to Y(\A_L)$. When there are real primes the map $A(\A_k)_\bullet \to A(\A_L)_\bullet$ need not be injective, so this argument breaks down. Similar to the situation in the previous paragraph, some additional argument is required to show that $Y(\A_L) \cap A(\A_k)^{\Br}$ being nonempty implies that $Y'(\R)$ is nonempty.
	
	\subsubsection*{Lemma~\ref{lem:BrSel}} The proof relied on the fact the map
	$$\iota^* : \Br_{1/2}(A)/\Br_0(A) \to \Br_{1/2}(C)/\Br_0(C)$$
	is surjective or, equivalently, that $\HH^1(k,\Pic^0(C_{k^\sep}))^D \to \HH^1(k,\Pic^0(A_{k^\sep}))^D$ is injective. But this is not generally true when there are real primes. Dualizing~\eqref{eq:real} and using \cite[Remark I.3.7]{MilneADT}, gives a commutative diagram 
	\begin{equation}\label{eq:pi0}
		\xymatrix{
		\HH^1(k,\Pic^0(C_{k^\sep}))^D \ar[r] & \HH^1(k,\Pic^0(A_{k^\sep}))^D \\
		\prod_{v \textup{ real}} \HH^1(k_v,\Pic^0(C_{k^\sep}))^D \ar[r]\ar[u]& \prod_{v \textup{ real}}\HH^1(k_v,\Pic^0(A_{k^\sep}))^D \ar[u] \\
		\prod_{v \textup{ real}}A'(k_v)_\bullet \ar@{=}[u]\ar[r] &\prod_{v \textup{ real}}A(k_v)_\bullet\ar@{=}[u]
		}
	\end{equation}
	in which the vertical maps in the first column give isomorphisms of the kernels of the horizontal maps.
	
	When these horizontal maps are not injective, one may try and conclude the proof as follows. Let $(x_v) \in C(\A_k) \cap A(\A_k)^{\Br_{1/2}}$. The goal is to find $(z_v) \in C(\A_k)^{\Br_{1/2}}$ which has the same image in $A(\A_k)_\bullet$ as $(x_v)$. Let $x^* = \textup{BM}(x_v)$ denote its image in $(\Br_{1/2}(C)/\Br_0(C))^D \simeq \HH^1(k,\Pic^0(C_{k^\sep}))^D$. Then $x^*$ lies in the kernel of the horizontal map in~\eqref{eq:pi0}. Let $(y_w) \in \prod_{w \textup{ real}}A'(k_w)$ be such that its image in $\prod_{w \textup{ real}} A'(k_w)_\bullet$ maps to $x^*$ in~\eqref{eq:pi0}. Extend this to an adelic point $(y_v) \in A'(\A_k)$ by setting $y_v = 0$ for all nonreal primes $v$. Since $C$ is a coset of $A'$, the difference $(z_v) = (x_v)-(y_v)$ in $A(\A_k)$ lies in $C(\A_k)$. Note that $(z_v)$ and $(x_v)$ have the same image in $A(\A_k)_\bullet$, since all $y_v$ lie on the identity component of $A(k_v)$. It would therefore be enough to check that the image of the image of $(z_v)$ in $(\Br_{1/2}(C)/\Br_0(C))^D$ is trivial. One expects this to be the case since $(x_v)$ and $(y_v)$ have the same image. Verifying this would require checking compatibility of the various pairings with the torsor structure on $C$.

\section{Adelic Mordell-Lang}\label{sec:AML}

The following was suggested in~\cite[Question 3.12]{Stoll} for coset free $X$ over a number field.

\begin{Conjecture}\label{conj:AML}
	Let $X \subset A$ be a closed subvariety of an abelian variety over a global field $k$. Assume $A_{\kbar}$ has no positive dimensional isotrivial quotient. Then there exists a finite union of cosets $Y = \bigcup C_i$ contained in $X$ such that every connected component of $\left(X(\A_k) \cap A(\A_k)^{\Br} \right)$ contains a point of $Y(\A_k)$.
\end{Conjecture}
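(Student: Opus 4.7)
The strategy is to upgrade Theorem~\ref{thm:AML1} from $\overline{A(k)}$ to the Brauer set $A(\A_k)^{\Br}$. Since Corollary~\ref{cor:notranscendental} identifies $A(\A_k)^{\Br}_\bullet$ with $\Sel(A)$, and $\Sel(A) = \varprojlim_n \Sel^n(A)$ is an inverse limit of classes of $n$-coverings, the plan is to apply Theorem~\ref{thm:AML1} level by level in the tower of coverings and then descend. Throughout I would view intersections inside $A(\A_k)_\bullet$ (or $A(\A_k)_\circ$, as in Section~\ref{sec:cosets}).

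Given an arbitrary $P \in X(\A_k) \cap A(\A_k)^{\Br}$, Corollary~\ref{cor:notranscendental} places $P$ in $\Sel(A)$, so $P$ admits a compatible family of lifts $\tilde{P}_n \in A_n(\A_k)$, where $\pi_n \colon A_n \to A$ is the $n$-covering classified by $P \bmod n \in \Sel^n(A)$ (see~\cite[Lemma 5.4]{PoonenVoloch}). The pullbacks $X_n := \pi_n^{-1}(X) \subset A_n$ contain $\tilde{P}_n$. Over a finite separable extension $L_n/k$ for which $A_n(L_n) \neq \emptyset$, the torsor $A_{n,L_n}$ becomes an abelian variety isomorphic to $A_{L_n}$, and the geometric hypothesis that $A_{\kbar}$ has no positive-dimensional isotrivial quotient persists. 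Applying Theorem~\ref{thm:AML1} over $L_n$ to $X_{n,L_n} \subset A_{n,L_n}$ yields a finite union $Z_n$ of cosets in $A_n$ over $L_n$, contained in $X_n$, with $X_n(\A_{L_n}) \cap \overline{A_n(L_n)} \subset Z_n(\A_{L_n})$. Since $\pi_n$ is an isogeny, $\pi_n(Z_n)$ is a finite union of cosets in $A$ contained in $X$, and Galois descent in the style of Lemma~\ref{lem:Y'} produces from $\pi_n(Z_n)$ a finite union $W_n$ of cosets of $A$ over $k$, contained in $X$. Since each irreducible component of $W_n$ is a coset of dimension at most $\dim X$, and any subvariety of $A$ contains only finitely many maximal cosets, Noetherianity forces some $Y := W_N$ to absorb every $W_n$ along a cofinal set. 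Lemma~\ref{lem:Yi} then ensures that the Brauer-accessible adelic points on $Y = \bigcup C_i$ are supported on individual cosets, completing the proof that $P$ is captured by $Y(\A_k)$.

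The main obstacle, and where the ``bit more effort'' alluded to in the introduction is spent, is in justifying that $\tilde{P}_n$ is actually captured by the coset set $Z_n$. This is immediate if $\tilde{P}_n \in \overline{A_n(L_n)}$, but a priori $\tilde{P}_n$ only lies in $A_n(\A_{L_n})^{\Br} \cong \Sel(A_{n,L_n})$, which can exceed $\overline{A_n(L_n)}$ by elements of $\Sha(A_{n,L_n})_\textup{div}$. The compatibility of the system $\{\tilde{P}_n\}$ is essential here: any failure of $\tilde{P}_n$ to sit in $\overline{A_n(L_n)}$ is detected at arbitrarily high levels of the tower of covers of $A_n$, but those higher covers of $A_n$ are themselves pulled back from higher covers of $A$, so the Galois descent of their cosets via $\pi_n$ contributes additional cosets of $A$ over $k$ which are eventually absorbed into $Y$ by the stabilization argument above. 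Controlling the growth of the extensions $L_n$ and making the recursion terminate in finitely many steps—rather than leaving an infinite tail of Sha-divisible contributions—is the delicate step, presumably handled by combining Proposition~\ref{lem:infords} (to rule out spurious intersections of cosets) with the boundedness of dimensions and the Noetherianity of subvarieties of $A$.
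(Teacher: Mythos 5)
Your reduction to Theorem~\ref{thm:AML1} has a genuine gap at exactly the point you flag, and the fix you sketch does not work. Having lifted $P$ to an adelic point $\tilde P_n$ on the $n$-covering $A_n$ and base-changed to $L_n$ where $A_{n,L_n}$ becomes an abelian variety, Theorem~\ref{thm:AML1} only captures points of $X_n(\A_{L_n})\cap\overline{A_n(L_n)}$; your $\tilde P_n$ is a priori just an adelic point (even its membership in $A_n(\A_{L_n})^{\Br}\cong\Sel(A_{n,L_n})$ would need an argument, since a lift of a Selmer element need not be orthogonal to the Brauer group of the covering), and in any case it can differ from an element of $\overline{A_n(L_n)}$ by a class in $\Sha(A_{n,L_n})_{\textup{div}}$. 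This discrepancy is not ``detected and absorbed at higher levels'': the same defect recurs verbatim at every level of the tower, so at no finite stage is $\tilde P_n$ shown to lie on $Z_n$, and hence pushing forward gives no containment at all. Moreover the stabilization step is invalid: the loci $W_n$ you produce are not a descending chain of subvarieties, only a sequence of finite unions of cosets inside $X$, and Noetherianity does not force a single $W_N$ to contain a cofinal family (already an infinite sequence of distinct rational points of $X$ shows this). So the argument never establishes $P\in Y(\A_k)$ for one fixed finite union of cosets $Y$.

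The paper's proof of the function field case (Theorem~\ref{thm:AML}) avoids global points on coverings altogether, which is precisely how it sidesteps $\Sha_{\textup{div}}$. After using Theorem~\ref{thm:notranscendental} to replace $A(\A_k)^{\Br}$ by $\Sel(A)$ and Lemma~\ref{lem:Y'} to reduce to geometrically irreducible components, it takes the finitely many locally soluble $p^m$-coverings $\pi_i\colon T_i\to A$ representing $\Sel^{p^m}(A)$, chooses only \emph{local} points $a_i\in T_i(k_u)$ at a single prime $u$ (after a finite separable extension with a degree one prime), and uses R\"ossler's exceptional schemes (Lemmas~\ref{lem:Wisson} and~\ref{lem:Rossler}) to produce a proper closed subvariety $Z\subsetneq W$ of each non-coset component $W$ with $W(k_v)\cap\pi_i(T_i(k_v))\subset Z(k_v)$ for all $v$ and all $i$; since $\Sel(A)\subset\bigcup_i\pi_i(T_i(\A_k))$, this yields $X(\A_k)\cap\Sel(A)\subset X_1(\A_k)$ with $X_1$ strictly smaller unless $X$ is geometrically a finite union of cosets, and a Noetherian induction on the genuinely decreasing chain $X\supset X_1\supset X_2\supset\cdots$ terminates, after which Lemma~\ref{lem:Y'} concludes. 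To salvage your approach you would need to replace ``apply Theorem~\ref{thm:AML1} to $X_n\subset A_{n,L_n}$'' by an argument using only the local solubility of the coverings, which is in effect what the exceptional-scheme argument accomplishes.
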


In this section we prove this conjecture in the case that $k$ is a global function field. In this case, the conclusion simplifies to the statement that $X(\A_k) \cap A(\A_k)^{\Br} \subset Y(\A_k)$. The proof is based on results of Wisson~\cite{Wisson,WissonThesis} and R\"ossler's proof the Mordell-Lang conjecture \cite{RosslerML} over global function fields which depends on the Manin-Mumford conjecture proved by Pink and R\"ossler \cite{PinkRossler}. Theorem~\ref{thm:AML} was proved in the coset free case in \cite[Lemma 3.18]{PoonenVoloch} using Hrushovski's model theoretic proof of Mordell-Lang \cite{Hrushovski}.

First we explain how to deduce the weaker version of the conjecture with $\overline{A(k)}$ in place of $A(\A_k)^{\Br}$ directly from the main result of Wisson \cite{Wisson}. This is Theorem~\ref{thm:AML1} of the introduction.

\begin{Theorem}\label{thm:AMLmw}
Let $A$ be an abelian variety over a global function field $k$ such that $A_{\kbar}$ has no positive dimensional isotrivial quotient. Let $X \subset A$ be a closed subvariety. Then there is a finite union of cosets $Y \subset X$ such that $X(\A_k) \cap \overline{A(k)} \subset Y(\A_k)$.
\end{Theorem}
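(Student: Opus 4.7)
My plan is to deduce this essentially directly from Wisson's continuous Mordell--Lang theorem over global function fields~\cite{Wisson,WissonThesis}, together with a Noetherianity argument. Wisson's result builds on R\"ossler's proof of Mordell--Lang~\cite{RosslerML} and the Pink--R\"ossler theorem on Manin--Mumford~\cite{PinkRossler}, and under the nonisotriviality hypothesis imposed on $A_\kbar$ it controls the intersection of the adelic closure of a finitely generated subgroup of $A(\kbar)$ with the adelic points on a closed subvariety $X \subset A$.

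The first step is to apply Wisson's theorem to $\Gamma = A(k)$, which is finitely generated by Mordell--Weil. The output is that, for every $P \in X(\A_k) \cap \overline{A(k)}$ and every prime $v \in \Omega_k$, the $v$-adic component $P_v$ lies on some coset $C \subset X$. A priori the coset $C$ may depend on both $P$ and $v$, so what is produced is only a (possibly infinite) family of cosets contained in $X$ that accounts for all local components of all points in the intersection.

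The second step is to collapse this family to a single fixed finite union. For this I would invoke Noetherianity of $X$: the collection of cosets contained in $X$, ordered by inclusion, has only finitely many maximal elements $C_1,\dots,C_r$, since an infinite strictly ascending chain of cosets would produce an infinite strictly ascending chain of Zariski-closed subsets of $X$. Taking $Y = \bigcup_{i=1}^r C_i \subset X$, every coset in $X$ is absorbed into some $C_i$, so each local component $P_v$ from the previous step lies in $C_i(k_v) \subset Y(k_v)$ for some $i = i(v,P)$. By the definition of the adelic topology this gives $P \in Y(\A_k)$, establishing the desired containment $X(\A_k) \cap \overline{A(k)} \subset Y(\A_k)$.

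The main (mild) obstacle is matching Wisson's formulation to the precise adelic statement here: one must verify that Wisson's theorem truly controls all of $X(\A_k) \cap \overline{A(k)}$ and not only its $v$-adic projections separately. Since $\overline{A(k)}$ is by definition the closure of $A(k)$ in $A(\A_k)$ with the restricted product topology, this should follow upon unpacking definitions, which is likely why the paper's text describes the deduction as ``fairly easy''. The Noetherianity step is entirely routine and does not use the nonisotriviality hypothesis.
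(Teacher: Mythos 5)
There is a genuine gap, and it sits exactly where the real work is. Your ``collapsing'' step claims that, by Noetherianity, the family of cosets contained in $X$ has only finitely many maximal elements. That is false: Noetherianity forbids infinite strictly ascending chains of closed subschemes, but it does not bound the number of maximal members of a family. For instance, if $X$ is a genus-$2$ curve embedded in its Jacobian, every geometric point of $X$ is a maximal coset contained in $X$ and there are infinitely many of them; likewise $X = E \times C \subset E \times \Jac(C)$ contains the infinitely many pairwise-incomparable maximal cosets $E \times \{c\}$, $c \in C$. So the $Y$ you build is not a finite union, and no purely topological or Noetherian argument can produce one: the finiteness of the special locus is precisely the arithmetic content of Mordell--Lang. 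In the paper this finiteness is imported directly from Wisson's theorem, whose output is not the pointwise statement you attribute to it (``each $v$-adic component of each $P \in X(\A_k)\cap\overline{A(k)}$ lies on some coset $C\subset X$''), but a single subvariety $W \subset X_{k'}$, $k' = k\overline{\F}$, whose geometric components are cosets, together with a metric inequality $d_w(W,P) \le c_w\, d_w(X_{k'},P)$ valid for all $P \in A(k)$ and all places $w$ of $k'$; it is this uniform inequality that gives $X(\A_k)\cap\overline{A(k)} \subset W(\A_{k'})$.

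Even granting a finite union, there is a second omission: Wisson's cosets are defined only over $k'$ (equivalently over a finite Galois extension $L/k$ after choosing a model), whereas the theorem demands cosets $C \subset X$ over $k$ and a containment in $Y(\A_k)$, not merely in $Y_L(\A_L)$. The paper's deduction therefore passes through Lemma~\ref{lem:Y'}, which is not formal: it uses Galois invariance of adelic points together with Lemma~\ref{lem:Yi}, and hence the Selmer-group result Proposition~\ref{lem:infords}, to show that an adelic point of $\overline{A(k)}$ lying on a finite union of cosets over $L$ is in fact supported on a $k$-rational union of cosets inside $X$. Your proposal never addresses this descent, and without it (or some substitute) the argument does not yield the statement as formulated. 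The nonisotriviality hypothesis, which you describe as unused in your second step, is of course what makes Wisson's input available in the first place.
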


\begin{proof}
Suppose $k$ is the function field of a curve over a finite field $\F$. Let $k' = k \times_\F \overline{\F}$. The main result of \cite{Wisson} asserts the existence of a subvariety $W \subset X_{k'}$ with the following properties: 
\begin{enumerate}
	\item $W_{\kbar}$ is a finite union of cosets in $A_{\kbar}$, and
	\item for every discrete valuation $w$ of $k'$ and metric $d_w$ on $A(k'_w)$ inducing the $w$-adic topology, there exists a positive real number $c_w$ such that for every $P$ in the finitely generated subgroup $A(k) \subset A(k')$ we have $d_w(W,P) \le c_w\cdot d_w(X_{k'},P)$. 
\end{enumerate}

For some finite Galois extension $L/k$ contained in $k'$ there is a model $Y$ of $W$ over $L$, i.e. a subvariety $Y \subset X_L$ such that the base change of $Y$ to $k'$ is equal to $W$ as a subvariety of $X_{k'}$. From (2) above it follows that $X_{k'}(\A_{k'}) \cap \overline{A(k)} \subset W(\A_{k'})$, where the topological closure is taken in $A_{k'}(\A_{k'})$. The inclusions $A(\A_k) \subset A_L(\A_L) \subset A_{k'}(\A_{k'})$ are homeomorphisms onto their images, which are closed subsets. So the closure of $A(k)$ in $A_{k'}(\A_{k'})$ is equal to the image of its closure in $A(\A_k)$. It follows that
\[
	X(\A_k) \cap \overline{A(k)} \subset W(\A_{k'}) \cap \overline{A(k)} = Y_{k'}(\A_{k'}) \cap \overline{A(k)} \subset Y(\A_L) \cap \overline{A(k)}.
\]
Lemma~\ref{lem:Y'} gives a finite union of cosets $Y' \subset X$ such that $Y(\A_L) \cap \overline{A(k)} \subset Y'(\A_k)$ inside $A(\A_L)$. So $Y' \subset X$ is a finite union of cosets in $A$ with the property that
\[
	X(\A_k) \cap \overline{A(k)} = Y'(\A_k)\,.
\]
\end{proof}

\subsection{R\"ossler's exceptional schemes}

Let us fix some notation in effect for the remainder of this section. Suppose $X$ is a closed subvariety of an abelian variety $A$ over $k = \F(U)$, where $U$ is a curve over the finite field $\F$ of characteristic $p$. We assume that $U$ is small enough so that $A$ spreads out to an abelian scheme $\calA/U$ and $X$ spreads out to $\calX/U$ which is a closed subscheme of $\calA$. Let $k' = k \times_\F \Fbar$, $U' = U \times_k k'$, $X' = X \times_k k'$, $\calA' = \calA \times_UU'$ and $\calX' = \calX \times_U U'$.

R\"ossler uses jet schemes in~\cite{RosslerML} to construct the `exceptional schemes' $\Exc^n(\calA,\calX)/U'$ for each $n \ge 1$ (See~\cite[Section 3A]{RosslerML}). These were used by Wisson to construct the variety $W \subset X_{k'}$ in the proof of Theorem~\ref{thm:AMLmw} above. The exceptional schemes are closed subschemes of $\calX'$. The generic fiber $\Exc^n(A,X) := \Exc^n(\calA,\calX)_{k'}$ is a closed subvariety of $X'$. The key properties we will require are given in the following two lemmas.

\begin{Lemma}[ {\cite[Proposition 3.2.1]{WissonThesis} }]\label{lem:Wisson}
	Suppose $L$ is a separable extension of $k'$. Then $$X(L) \cap p^nA(L) \subset Exc^n(A,X)(L)\,.$$
\end{Lemma}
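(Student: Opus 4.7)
The plan is to unpack R\"ossler's construction of $\Exc^n(\calA, \calX)$ from~\cite{RosslerML} in terms of jet schemes and Frobenius, and then verify the stated containment directly at the level of jets. Recall that $\Exc^n(\calA,\calX)$ is built from the $n$-th jet scheme $J^n(\calA/U')$: one has a closed embedding of jet schemes $J^n(\calX) \hookrightarrow J^n(\calA)$, and $\Exc^n(\calA,\calX)$ is (the scheme-theoretic image in $\calX$ of) the intersection of $J^n(\calX)$ with the image of the endomorphism of $J^n(\calA)$ induced by $[p^n]: \calA \to \calA$.

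The key algebraic input is that over the characteristic-$p$ base $U'$, the morphism $[p^n]$ on $\calA$ factors as $V^n \circ F^n_{\calA/U'}$, where $F_{\calA/U'}$ denotes the relative Frobenius. In particular the induced map on $n$-jets factors through Frobenius, so the image in $J^n(\calA)$ of $[p^n]$ is a proper closed subscheme cut out, roughly, by the vanishing of certain cotangent data along the zero section. By construction, $\Exc^n(\calA,\calX)$ is precisely the locus in $\calX$ where a point extends to an $n$-jet in $\calA$ lying in this image.

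Now take $x \in X(L) \cap p^nA(L)$ and write $x = [p^n]y$ for some $y \in A(L)$. Shrinking $U$ if necessary, $y$ extends to an $L$-valued section of $\calA$ and, since $L/k'$ is separable, $y$ determines a well-defined $L$-point of the jet scheme $J^n(\calA)$ (separability is what ensures the jet of $y$ is actually represented by an $L$-point rather than only by a point of an inseparable extension). Applying $[p^n]$ to this jet yields an $L$-point of $J^n(\calA)$ lying in the image of $[p^n]$, whose underlying point of $\calA$ is $x$. Because $x \in X(L)$, this jet also lies in $J^n(\calX)$, so it witnesses the defining condition of $\Exc^n(\calA,\calX)$ at $x$. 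Projecting back to $\calX$ gives $x \in \Exc^n(A,X)(L)$, as required.

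The main obstacle is a clean translation of R\"ossler's construction to the jet-theoretic formulation used above; in particular, one must verify that the separability hypothesis on $L/k'$ is exactly what is needed to guarantee that the $n$-jet of $y$ descends to an $L$-point of $J^n(\calA)$, and that R\"ossler's actual defining condition for $\Exc^n$ coincides with "the $n$-jet lies in the image of $[p^n]$" up to the projection to $\calX$. Once these technical points are aligned with the original construction, the containment follows formally.
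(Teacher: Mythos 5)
First, note that the paper itself gives no proof of this lemma: it is imported wholesale from Wisson's thesis (Proposition 3.2.1), so there is no internal argument to measure yours against; you are in effect trying to reprove an external result, and your write-up falls short of that. The decisive gap is the step ``Applying $[p^n]$ to this jet yields an $L$-point of $J^n(\calA)$ \dots\ Because $x \in X(L)$, this jet also lies in $J^n(\calX)$.'' That inference is a non sequitur: a jet is a section over a nilpotent thickening of $\Spec L$, and the fact that its closed point lies on the closed subscheme $X$ says nothing about whether the thickened section factors through $X$ -- the ideal of $X$ must pull back to zero, which is an infinitesimal condition (a nonconstant jet through a point of $X$ generally does not lie in $J^n(\calX)$). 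The reason the containment is nonetheless true is exactly the characteristic-$p$ mechanism you mention but never deploy at this step: because the relevant thickening has nilpotency order $n+1 \le p^n$, the factorization $[p^n] = V^n \circ F^n$ (equivalently, the fact that $p^n$ kills the kernel of reduction along the nilpotent ideal) forces $[p^n]$ of \emph{any} $n$-jet lifting $y$ to be the canonical ``constant'' jet over $x = p^n y$, and only for that reason does $x \in X(L)$ imply the jet lands in $\calX$. You invoke the Frobenius factorization solely to describe the image of $[p^n]$ inside $J^n(\calA)$, not where it is actually needed, so as written the argument does not go through.

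Beyond that, the two issues you defer as ``technical points'' are where the remaining content sits, so they cannot be postponed. The lemma is a statement about membership in $\Exc^n(A,X)$, yet you leave unverified whether R\"ossler's actual construction of the exceptional scheme agrees with your ad hoc description (image in $\calX$ of $J^n(\calX)$ intersected with the image of $[p^n]$ on $J^n(\calA)$); if it does not, the formal part of your argument proves a statement about a different scheme. Likewise, your account of separability (``the jet of $y$ is represented by an $L$-point rather than a point of an inseparable extension'') is not a correct identification of its role: lifting $y \in A(L)$ across the nilpotent affine thickening uses smoothness of $\calA/U'$, while separability of $L/k'$ is needed to control the structure of the thickening of $\Spec L$ relative to $U'$ (so that $L$-points of the relative jet scheme are computed by the truncation you implicitly use); your own proposal flags this as unchecked. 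Finally, ``shrinking $U$ so that $y$ extends to an $L$-valued section'' is not meaningful for a general field extension $L/k'$. In sum, the skeleton (jets plus $[p^n] = V^n \circ F^n$) points in the right direction, but the key verification and the match with the actual definition of $\Exc^n$ are missing, so this is a plan rather than a proof.
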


\begin{Corollary}
	Suppose $L$ is the completion of $k'$ at a discrete valuation. Then $$X(L) \cap p^nA(L) \subset Exc^n(A,X)(L)\,.$$
\end{Corollary}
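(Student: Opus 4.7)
The obstacle the corollary addresses is that the completion $L=(k')_w$ need not be a separable extension of $k'$, so Lemma~\ref{lem:Wisson} does not apply directly. The standard workaround, already used in the proof of Lemma~\ref{lem:Y'}, is to pass through the Henselisation $L^h=(k')_w^h$, which is separable over $k'$ by \cite[Lemma~3.1]{PoonenVoloch}, and combine Greenberg's approximation theorem with the fact that $\Exc^n(A,X)$ is a closed subscheme of $X'$.

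Here is the plan. Take $x\in X(L)\cap p^nA(L)$ and write $x=p^n y$ with $y\in A(L)$. The key point is that being in $X(L)\cap p^nA(L)$ is encoded by a closed subvariety of $A'$: let
\[
    \calY\;=\;[p^n]^{-1}(X')\;\subset\;A'\,,
\]
so that $y\in \calY(L)$. Since $L^h/k'$ is separable, we may apply Greenberg's approximation theorem \cite{Greenberg} to the variety $\calY$ over $k'$: the set $\calY(L^h)$ is dense in $\calY(L)$ in the $w$-adic topology. Pick a sequence $y_i\in \calY(L^h)$ with $y_i\to y$, and set $x_i=p^ny_i\in A(L^h)$. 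Then, by construction, $x_i\in X(L^h)\cap p^nA(L^h)$, and since $L^h$ is a separable extension of $k'$, Lemma~\ref{lem:Wisson} yields
\[
    x_i\;\in\;\Exc^n(A,X)(L^h)\;\subset\;\Exc^n(A,X)(L)
\]
for every $i$.

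Finally, since $\Exc^n(A,X)$ is a closed subscheme of $X'$, its set of $L$-points is closed in $X(L)$ in the $w$-adic topology. Continuity of the multiplication-by-$p^n$ map on $A(L)$ gives $x_i=p^ny_i\to p^ny=x$, so passing to the limit yields $x\in\Exc^n(A,X)(L)$, as required.

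The steps are mostly bookkeeping; the only nontrivial points are (i) that $L^h$ really is separable over $k'$ so that Lemma~\ref{lem:Wisson} is available, and (ii) the observation that "being of the form $p^n y$ for some $y\in A$" is cut out by a closed subscheme of $A'$, which is what makes Greenberg's approximation applicable to the right object. There is no genuine obstacle, just the need to approximate $y$ rather than $x$ directly, since naively approximating $x$ by $x'\in X(L^h)$ would not preserve divisibility by $p^n$ in $A(L^h)$.
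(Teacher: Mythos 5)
Your proof is correct and follows essentially the same route as the paper: the paper also passes to the Henselisation $L^h$ (separable over $k'$ by \cite[Lemma 3.1]{PoonenVoloch}), applies Greenberg's approximation to the pullback $X_n = [p^n]^{-1}(X)$ (your $\calY$), pushes the approximating points forward by $[p^n]$ to land in $X(L^h)\cap p^nA(L^h)\subset \Exc^n(A,X)(L^h)$ via Lemma~\ref{lem:Wisson}, and concludes by closedness of $\Exc^n(A,X)(L)$ in $A(L)$. The only differences are cosmetic (notation for the pullback and the explicit mention of continuity of $[p^n]$).
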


\begin{proof}
	Let $f:X_n \to X$ be the pullback of $[p^n]:A \to A$ and let $L^h$ be the henselization of $k'$ at $w$. Then $L^h$ is a separable extension of $k'$ by \cite[Lemma 3.1]{PoonenVoloch} and so $f(X_n(L^h)) = X(L^h) \cap p^nA(L^h) \subset Exc^n(A,X)(L^h)$ by the lemma. Let $x = f(y) \in X(L) \cap p^nA(L) = f(X_n(L))$. By Greenberg's approximation theorem \cite{Greenberg}, there is a sequence of points in $X_n(L^h)$ converging to $y$ in $X_n(L)$. The image of this sequence is contained in $f(X_n(L^h)) \subset Exc^n(A,X)(L^h)$ and converges to $x$. Since $Exc^n(A,X)(L)$ is a closed subset of $A(L)$ containing $Exc^n(A,X)(L^h)$ we must have $x \in Exc^n(A,X)(L)$.
\end{proof}

We note that Lemma~\ref{lem:Wisson} also allows us to identify exceptional subschemes of $X$ which contain intersections with cosets of $p^nA(L)$. Given $Q \in \calA'(U') = A(k')$, let $X^{+Q}$ denote the translate of $X$ by $Q$. The lemma gives
\[
	X(L) \cap (Q + p^nA(L)) \subset Exc^n(A,X^{-Q})^{+Q}(L)\,.
\]

\begin{Lemma}[R\"ossler]\label{lem:Rossler}
	Suppose the reduced subscheme of $X_\kbar$ is not a finite union of cosets in $A_\kbar$. Let $u \in U$ be a closed point and $k_u$ the completion of $k$ at $u$. Then there exists $m \ge 1$ such that for all $Q \in A(k_u)$ we have $Exc^m(A,X^{+Q})$ is not Zariski dense in $X'^{+Q}$. 
\end{Lemma}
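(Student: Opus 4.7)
The plan is to leverage R\"ossler's main non-density result for exceptional schemes from \cite{RosslerML} and extend it uniformly over the compact $p$-adic family of translates. R\"ossler's theorem says that if the reduced subscheme of $X_\kbar$ is not a finite union of cosets in $A_\kbar$, then there exists some $n \ge 1$ such that $\Exc^n(A,X)$ is not Zariski dense in $X'$. The content of the present lemma is the uniformity in $Q$. The first observation is that the non-coset hypothesis is translation-invariant: the reduced subscheme of $(X^{+Q})_\kbar$ is a translate of that of $X_\kbar$, and a finite union of cosets in $A_\kbar$ translates to a finite union of cosets. So R\"ossler's theorem applies to $X^{+Q}$ individually, furnishing some $n(Q) \ge 1$ such that $\Exc^{n(Q)}(A,X^{+Q})$ is not Zariski dense in $X'^{+Q}$. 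The task is to choose $n$ independently of $Q \in A(k_u) = \calA(\calO_u)$.

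Next I would place all the translates into a single family. Consider the morphism of $U$-schemes
$$\mu \colon \calA \times_U \calA \to \calA \times_U \calA, \quad (Q, x) \mapsto (Q, x + Q),$$
and let $\calX_{\mathrm{univ}} := \mu(\calA \times_U \calX)$, a closed subscheme of $\calA \times_U \calA$ that is flat over the first factor $\calA$ and whose fiber over a point $Q$ is the translate $\calX^{+Q}$. R\"ossler's exceptional-scheme construction (being a jet-theoretic construction performed in $\calA$) works in families and produces a closed subscheme $\mathcal{E}_n \subset \calX_{\mathrm{univ}}$ whose fiber over $Q$ is $\Exc^n(A,X^{+Q})$. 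The condition on $Q$ that $\mathcal{E}_n$ has a fiber of strictly smaller dimension than the fiber of $\calX_{\mathrm{univ}}$ is constructible on the base $\calA$ (by generic flatness / semi-continuity of fiber dimension). Moreover, by Noetherianity the chain $\mathcal{E}_1 \supset \mathcal{E}_2 \supset \cdots$ stabilises to some $\mathcal{E}_\infty$.

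Applied to the generic fiber $\calX_{\mathrm{univ}} \to \calA$, R\"ossler's theorem (together with the translation invariance of the non-coset hypothesis) tells us that $\mathcal{E}_\infty$ restricted to \emph{every} geometric fiber is a proper subscheme. Combined with upper semi-continuity of fiber dimension and constructibility, this produces a single $m_0 \ge 1$ such that $\mathcal{E}_{m_0} = \mathcal{E}_\infty$ (over some open dense subscheme of $\calA$) and, crucially, such that for every geometric point $Q$, $\mathcal{E}_{m_0}|_Q$ is a proper subscheme of $\calX^{+Q}$. The value $m = m_0$ then works for every $Q \in A(k_u)$, since specialization at the point $Q \in \calA(\calO_u)$ realises the fiber $\Exc^{m_0}(A,X^{+Q})$ inside $X'^{+Q}$.

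The main obstacle is the last step: verifying that R\"ossler's construction genuinely sheafifies into a family over $\calA$ for which the generic non-density result propagates to \emph{every} fiber. Translation equivariance of the jet-bundle constructions underlying $\Exc^n$ should reduce most of this to the case $Q = 0$, but some care is needed because the non-density statement in \cite{RosslerML} is stated for the geometric generic fiber; one must then invoke semi-continuity plus compactness of $\calA(\calO_u)$ (with its $u$-adic topology) to descend to a uniform $m$ over all of $A(k_u)$. Alternatively, if one prefers to avoid a universal family, one can argue directly that $Q \mapsto \min\{n : \Exc^n(A,X^{+Q}) \text{ is not dense}\}$ is upper semi-continuous in the $u$-adic topology, and then use compactness of $A(k_u)$ to bound it.
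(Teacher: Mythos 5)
Your proposal treats the uniformity in $Q$ as a routine "spread out over a family and use constructibility/compactness" step, but that uniformity is precisely the content of the lemma, and your argument for it rests on steps you yourself flag as unresolved. Concretely: (i) you assume as a black box that \cite{RosslerML} proves "$X_\kbar$ not a finite union of cosets $\Rightarrow$ $\Exc^n(A,X)$ not Zariski dense in $X'$ for some $n$", but that is not the form of R\"ossler's result --- his non-density statements are at the special fibre (points of $\calX_{u'}$ lifting to the $m$-th infinitesimal neighbourhood $u'_m$ and divisible by $p^m$ there), and, crucially, his Corollary 4.5 is \emph{already uniform} in the translating point $Q$, because the level-$m$ condition sees $Q$ only through the infinitesimal neighbourhood of the closed point; the engine behind it is the Pink--R\"ossler Manin--Mumford theorem, which guarantees that torsion points are not dense in any translate $X_\kbar^{+Q}$ and which is entirely absent from your argument. (ii) Your universal-family route needs that the construction of $\Exc^n$ commutes with base change in the family $\calX_{\mathrm{univ}}\to\calA$, that the $\calE_n$ are nested so that Noetherian stabilization yields a single $m_0$, and that the fibrewise non-density can be invoked at the generic points of the constructible strata --- i.e.\ for translates of $X$ by points defined over large transcendental extensions (function fields of strata of $\calA$), where the black-boxed theorem over the global function field $k'$ does not directly apply. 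None of these is established, and you acknowledge the first as "the main obstacle" without resolving it. (iii) The semicontinuity-of-fibre-dimension step conflates "not Zariski dense" with a drop in fibre dimension, which requires irreducibility (or equidimensionality) of the fibres; this is harmless only because one may reduce to $\calX$ integral, but it should be said. The alternative you sketch --- $u$-adic upper semicontinuity of $Q\mapsto\min\{n:\Exc^n(A,X^{+Q})\text{ not dense}\}$ plus compactness of $A(k_u)$ --- is likewise asserted, not proved; making it precise amounts to showing the level-$m$ condition depends on $Q$ only through a congruence/infinitesimal neighbourhood, which is essentially reproving R\"ossler's Corollary 4.5.

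For comparison, the paper's proof is short precisely because it quotes the uniform statement: Manin--Mumford \cite[Theorem 3.7]{PinkRossler} shows torsion points are not Zariski dense in $X_\kbar^{+Q}$ for any $Q$; \cite[Corollary 4.5]{RosslerML} then gives one $m$ such that for all $Q\in A(k_u)$ the special fibre $\Exc^m(\calA,\calX^{+Q})_{u'}$ is not dense in $\calX^{+Q}_{u'}$; finally, integrality of $\calX^{+Q}$ transfers non-density from the special fibre to the generic fibre, i.e.\ $\Exc^m(A,X^{+Q})$ is not dense in $X'^{+Q}$. If you want to avoid citing Corollary 4.5 and argue the uniformity yourself, you must either carry out the family/base-change analysis of the jet-theoretic construction in detail or prove the local constancy in $Q$ of the level-$m$ condition; as written, the proposal has a genuine gap at exactly the point the lemma is about.
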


\begin{proof}
	By the Manin-Mumford conjecture \cite[Theorem 3.7]{PinkRossler} the hypothesis on $X_\kbar$ implies that the torsion points of $A_\kbar$ are not Zariski dense in $X_\kbar^{+Q}$ for any $Q$. By~\cite[Corollary 4.5]{RosslerML} this implies that there exists an $m$ such that for all $Q \in A(k_u)$ the sets
\begin{equation}\label{eq:Exc}
Exc^m(\calA,\calX^{+Q})_{u'}(\Fbar) = \{P \in \calX^{+Q}(\Fbar) : P \text{ lifts to an element of } \calX^{+Q}(u'_m) \cap p^n\calA(u'_m)\}
\end{equation}
	 are not Zariski dense in $\calX^{+Q}_{u'}$. Here $u' \in U'$ is a closed point above $u$ and $u'_m$ is its $m$-th infinitesimal neighborhood. The equality between the two sets in~\eqref{eq:Exc} is noted in \cite[Proof of Proposition 3.1]{RosslerML}. Since $\calX^{+Q}$ is integral, this implies that $Exc^m(A,X^{+Q}) = Exc(\calA,\calX^{+Q})_{k'}$ is not Zariski dense in $X'^{+Q} = (\calX'^{+Q})_{k'}$.
\end{proof}

\subsection{The adelic Mordell-Lang conjecture over function fields}

\begin{Theorem}\label{thm:AML}
	Let $A$ be an abelian variety over a global function field $k$. Assume that $A_\kbar$ has no positive dimensional isotrivial quotient. Let $X \subset A$ be a closed subvariety. Then there exists a finite union of cosets $Y$ contained in $X$ such that
	$$\left(X(\A_k) \cap A(\A_k)^{\Br} \right) \subset Y(\A_k)\,.$$
\end{Theorem}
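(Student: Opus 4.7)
The strategy is Noetherian induction on $X \subset A$, reducing via R\"ossler's exceptional schemes to either a geometric union of cosets (handled by Lemma~\ref{lem:Y'}) or to a proper closed $k$-subvariety (to which the inductive hypothesis applies). The base case $X = \emptyset$ is vacuous. For the inductive step, assume the result for all proper closed $k$-subvarieties of $X$. If the reduced geometric components of $X_\kbar$ are cosets of $A_\kbar$, then Lemma~\ref{lem:Y'} applied with $K=k$ and $Y=X$ directly yields a finite union of $k$-cosets $Y'\subset X$ with $X(\A_k)\cap A(\A_k)^{\Br}\subset Y'(\A_k)$. Otherwise the goal reduces to producing a proper closed $k$-subvariety $Z\subsetneq X$ with $X(\A_k)\cap A(\A_k)^{\Br}\subset Z(\A_k)$.

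To construct $Z$, I would begin by selecting a pair $(m,u)$, where $m\ge 1$ is an integer and $u\in U$ is a closed point such that Lemma~\ref{lem:Rossler} gives the non-density of $\Exc^m(A,X^{+Q})$ in $X'^{+Q}$ for all $Q\in A(k_u)$, and simultaneously (using Chebotarev's theorem) $u$ splits completely in a finite Galois extension $L/k$ trivializing all of the finitely many $p^m$-coverings $\phi_\tau\colon B_\tau\to A$ representing classes in the finite group $\Sel^{p^m}(A)$, so that each such $\phi_\tau$ takes the form $z\mapsto p^m z+Q_\tau$ over $L$ with $Q_\tau\in A(L)\subset A(k_u)$. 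For any $x\in X(\A_k)\cap A(\A_k)^{\Br}$, Corollary~\ref{cor:notranscendental} identifies $x$ with a Selmer element, whose reduction in $\Sel^{p^m}(A)$ picks out the $\tau$ through which $x$ lifts; hence $x_v-Q_\tau\in p^m A(L_w)$ at every place $v$ of $k$ and $w\mid v$ in $L$. The corollary to Lemma~\ref{lem:Wisson} then places $x_v$ in $E_\tau(L_w)$ with $E_\tau:=\Exc^m(A,X^{-Q_\tau})^{+Q_\tau}$, a subvariety that is not Zariski dense in $X$ (over the relevant extension) by Lemma~\ref{lem:Rossler}. Letting $Z_\tau\subset X$ be the $k$-descent of the Galois orbit of the Zariski closure of $E_\tau$, the $k_v$-rationality of $x_v$ together with $x_v\in Z_\tau(L_w)$ gives $x_v\in Z_\tau(k_v)$ and so $x\in Z_\tau(\A_k)$. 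The required $Z$ is then $\bigcup_{\tau\in\Sel^{p^m}(A)}Z_\tau$, a proper closed $k$-subvariety of $X$ (after reducing to $X$ irreducible).

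The main obstacle I anticipate is coordinating the choice of $m$ and $u$: Lemma~\ref{lem:Rossler} supplies $m$ as a function of $u$, while the argument wants $u$ to split in an extension $L$ whose complexity depends on $m$. Resolving this apparent circularity likely requires either a uniformity statement for $m$ in Lemma~\ref{lem:Rossler}---which appears to be implicit in the Wisson--R\"ossler framework, since the variety $W$ in Theorem~\ref{thm:AMLmw} corresponds to a single $m$---or an iterative refinement that terminates. A subsidiary point is verifying that the Galois orbit of $E_\tau$ genuinely descends to a $k$-subvariety of $X$, which follows from the fact that R\"ossler's construction is algebraic and hence defined over a finite subextension of $k\cdot\Fbar$.
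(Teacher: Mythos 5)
Your overall mechanism is the right one and is essentially the paper's: identify $X(\A_k)\cap A(\A_k)^{\Br}$ with $\Sel(A)$, cover $\Sel(A)$ by the images of the finitely many $p^m$-coverings representing $\Sel^{p^m}(A)$, use Lemma~\ref{lem:Wisson} to trap the local points in translated exceptional schemes, use Lemma~\ref{lem:Rossler} for non-density, and iterate/descend via Lemma~\ref{lem:Y'}. But the obstacle you flag at the end is a genuine gap, and your two proposed remedies do not close it. You need the translation points $Q_\tau$ to lie in $A(k_u)$ in order to invoke Lemma~\ref{lem:Rossler}, and you arrange this by asking that $u$ split completely (via Chebotarev) in a field $L$ trivializing the $p^m$-coverings; since $m=m(u)$ this is circular, and neither of your fixes is available: no uniformity of $m$ in $u$ is proved (Lemma~\ref{lem:Rossler} only gives $m$ after $u$ is fixed, and nothing in the Wisson--R\"ossler input supplies $u$-independence), and the ``iterative refinement'' is left unspecified. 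The way out is different and does not use Chebotarev at all: fix $u$ first and take $m=m(u)$; each covering $\pi_i\colon T_i\to A$ representing a class of $\Sel^{p^m}(A)$ is locally soluble, so $T_i(k_u)\ne\emptyset$ and hence (by smoothness/Greenberg) $T_i(k_u^h)\ne\emptyset$; choose $a_i\in T_i(k_u^h)$ and let $L/k$ be the finite separable subextension of $k_u^h$ over which the $a_i$ are defined. Such an $L$ automatically has a prime of degree $1$ over $u$, so $\pi_i(a_i)\in A(L)\subset A(L_{\mathfrak u})=A(k_u)$, and over $L$ each $\pi_i$ becomes $z\mapsto p^mz+\pi_i(a_i)$. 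Thus the trivializing field is built \emph{after} $m$ and $u$, from local points at $u$, and the circularity disappears. Without this (or some other genuine resolution), your construction of $E_\tau$ is not licensed by Lemma~\ref{lem:Rossler}.

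A secondary issue: ``after reducing to $X$ irreducible'' is not sufficient for the properness of $Z=\bigcup_\tau Z_\tau$ in $X$. If $X$ is irreducible over $k$ but not geometrically irreducible, the Galois orbit of a non-dense $E_\tau$ can cover all geometric components, so $Z$ need not be a proper subvariety; moreover components whose reduced subschemes are cosets must be set aside (Lemma~\ref{lem:Rossler} does not apply to them) and added back into the next stage. The clean bookkeeping is to pass to a finite separable extension making all components geometrically irreducible, run the step there (keeping the coset components untouched and shrinking only the non-coset ones), and use Lemma~\ref{lem:Y'} at the end to descend the resulting finite union of cosets back to $k$; your Noetherian induction over $k$-subvarieties has to be adjusted to accommodate this tower of extensions.
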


\begin{proof}
	First note that by Theorem~\ref{thm:notranscendental}, we can replace $A(\A_k)^{\Br}$ in the statement by $\Sel(A)$. Then, by Lemma~\ref{lem:Y'}, we are reduced to showing that there is a finite separable extension $K/k$ and a subvariety $Y \subset X_K$ such that $(Y_{\Kbar})_\textup{red}$ is a finite union of cosets and $X(\A_K) \cap \Sel(A_K) \subset Y(\A_K)$. Thus we may pass to a finite separable extension and assume that that all irreducible components of $X$ are geometrically irreducible and that the reduced subscheme of at least one irreducible component of $X$ is not a coset.
	
	Let $u \in U$. Let $m = m(u)$ be the maximum of the integers given by R\"ossler's Lemma~\ref{lem:Rossler} as we range over the irreducible components of $X$ whose reduced subschemes are not cosets in $A$. Let $\pi_i : T_i \to A$, $i = 1,\dots,s$ be $p^m$-coverings of $A$ representing the finitely many elements of $\Sel^{p^m}(A)$. By \cite[Lemma 5.4]{PoonenVoloch} we have that $\Sel(A) \subset \bigcup_i \pi_i(T_i(\A_k))$. For each $i$, chose a point $a_i \in T_i(k_{u}^h)$ where $k_{u}^h$ denotes the Henselization of $k$ at $u$. Let $L/k$ be a finite separable extension over which all of the $a_i$ are defined. We may choose $L$ to have a prime $\frak{u}$ of degree $1$ over $u$, so that $a_i \in A(L_{\frak{u}}) = A(k_{u})$. 
	
	Let $W \subset X$ be an irreducible component whose reduced subscheme is not a coset. By Lemma~\ref{lem:Rossler} we have that $Exc^m(A,W^{-a_i})$ is not Zariski dense in $W^{-a_i}$, for all $i$. Then, for every $i$, the translate $Exc^m(A,W^{-a_i})^{+a_i}$ is not Zariski dense in $W$. Note that each $Exc^m(A,W^{-a_i})^{+a_i}$ is defined over some finite constant extension of the finite separable extension $L/k$. Let $Z$ be the union of the Galois conjugates of all of the $Exc^m(A,W^{-a_i})^{+a_i}$ for $i = 1,\dots,s$. Then $Z$ is a subvariety of $W$. Moreover, since $W$ is geometrically irreducible, we have that $Z$ is not Zariski dense in $W$.
	
	By Lemma~\ref{lem:Wisson} and its corollary, for any prime $w$ of $L$ above a prime $v$ of $k$ and for all $i= 1,\dots,s$, we have
	\[
		W(k_v) \cap \pi_i(T_i(k_v)) \subset W(L_w) \cap \pi_i(T_i(L_w)) = W(L_w) \cap \left(a_i + p^m(A(L_w))\right) \subset Z(L_w)\,,
	\]
	and so
	\[
		W(k_v) \cap \pi_i(T_i(k_v)) \subset W(k_v) \cap Z(L_w) = Z(k_v)\,.
	\]
	
	Let $X_1 \subset X$ be the union of the irreducible components of $X$ whose reduced subschemes are cosets, together with the subvarieties $Z = Z(W)$ constructed for each irreducible component $W \subset X$ whose reduced subscheme is not a coset. For every prime $v$ of $k$ and every $i = 1,\dots,s$, we have
	\[
		X(k_v) \cap \pi_i(T_i(k_v)) \subset X_1(k_v)\,.
	\]
	Since $\Sel(A) \subset \bigcup_{i} \pi_i(T_i(\A_k))$, it follows that $X(\A_k) \cap \Sel(A) \subset X_1(\A_k)$. 
	
We now repeat the argument with $X_1$ in place of $X$ and proceed by induction to construct a sequence of varieties $X_1/k_1,X_2/k_2,\dots$, defined over finite separable extensions $k = k_1 \subset k_2 \subset \dots $, such that $X_{j+1} \subset (X_{j})_{k_{j+1}}$ and 
	$X(\A_{k_j}) \cap \Sel(A_{k_j}) \subset X_j(\A_{k_j})$ for all $j$. Note that $X_{j+1}$ is not Zariski dense in $X_j$ unless the reduced subscheme of $(X_j)_\kbar$ is a finite union of cosets. So by noetherianity the sequence must eventually stabilise at some $X_N$ such that the reduced subscheme of $(X_N)_{\overline{k}}$ is a finite union of cosets. As noted in the first paragraph, we may conclude by applying Lemma~\ref{lem:Y'} to $X_N/k_N$.
\end{proof}

\section{Proofs of the main theorems}\label{sec:proofs}

The following gives Theorems~\ref{thm:1} and~\ref{thm:2} of the introduction. Theorem~\ref{thm:2a} is an easy consequence of these two. Recall that $X(\A_k)_\circ = \prod_{v \in \Omega^\circ_k}X(k_v)$ is the product over the set $\Omega_k^\circ$ of nonarchimedean primes and $X(\A_k)_\circ^{\Br}$ denotes the image of $X(\A_k)^{\Br}$ in $X(\A_k)_\circ$.

\begin{Theorem}\label{thm:conditional}
	Let $A$ be an abelian variety over a global field $k$ and let $X$ be a closed subvariety of $A$. If $k$ is a global function assume that $A_{\kbar}$ has no nonzero isotrivial quotient. If $k$ is a number field assume that $k$ is totally imaginary and Conjecture~\ref{conj:AML} holds for $X \subset A$. Then
	\begin{enumerate}
		\item The images of $\overline{X(k)}$ and $X(\A_k) \cap \overline{A(k)}$ in $A(\A_k)_\circ$ are equal, and
		\item\label{it:2} There is a finite collection of cosets $C_i = a_i + A_i \subset X$, $i = 1,\dots,r$, such that 
			\[
			X(\A_k)^{\Br}_\circ = \bigcup_{i = 1}^r (C_i(\A_k)_\circ^{\Br})\,.
			\]
			Moreover, $X(k)$ is dense in $X(\A_k)^{\Br}_\circ$ if and only if $\Sha(A_i)_\textup{div} = 0$ for all $i = 1,\dots,r$.
	\end{enumerate}
\end{Theorem}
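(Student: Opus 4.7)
The plan is to combine the adelic Mordell-Lang inputs from Section~\ref{sec:AML} with the coset analysis of Section~\ref{sec:cosets} and the torsor results of Theorem~\ref{thm:notranscendental}. Throughout I will use that $A(\A_k)_\bullet = A(\A_k)_\circ$ since $k$ has no real primes, so the connected-component formulations in Theorem~\ref{thm:AML} and Conjecture~\ref{conj:AML} both become statements about subsets of $A(\A_k)_\circ$.

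First I would prove (1). Apply Theorem~\ref{thm:AML1} (or Conjecture~\ref{conj:AML}, which implies the weaker version with $\overline{A(k)}$ in place of $A(\A_k)^{\Br}$ because $\overline{A(k)} \subset A(\A_k)^{\Br}_\bullet$) to obtain a finite union of cosets $Y = \bigcup_i C_i \subset X$ such that the image of $X(\A_k) \cap \overline{A(k)}$ in $A(\A_k)_\circ$ lies in the image of $Y(\A_k)$. Lemma~\ref{lem:Yi} then confines this image to $\bigcup_i C_i(\A_k)$, and for each $i$ Lemma~\ref{lem:T} identifies $C_i(\A_k) \cap \overline{A(k)}$ with $\overline{C_i(k)} \subset \overline{X(k)}$, giving the nontrivial containment. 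The reverse is immediate.

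For (2), I would apply Theorem~\ref{thm:AML} (or Conjecture~\ref{conj:AML}) to find cosets $C_i \subset X$ so that, after passing to $A(\A_k)_\circ$, the set $X(\A_k)^{\Br} \subset X(\A_k) \cap A(\A_k)^{\Br}$ lands in $Y(\A_k)$. Combining Lemma~\ref{lem:Yi} with Lemma~\ref{lem:BrSel} yields
\[
X(\A_k)^{\Br}_\circ \subset \bigcup_i \bigl(C_i(\A_k) \cap A(\A_k)^{\Br}\bigr)_\circ = \bigcup_i C_i(\A_k)^{\Br}_\circ,
\]
and the reverse containment is immediate from functoriality of Brauer-Manin applied to each inclusion $C_i \hookrightarrow X$. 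I would then discard any $C_i$ with $C_i(\A_k)^{\Br}_\circ = \emptyset$ to make the collection minimal.

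For the density equivalence, the forward direction is direct: assuming every $\Sha(A_i)_\textup{div}$ vanishes, Theorem~\ref{thm:notranscendental}(2) forces $[C_i]=0$ (hence $C_i \simeq A_i$ as a torsor), and Theorem~\ref{thm:notranscendental}(3) gives $C_i(k)$ dense in $C_i(\A_k)^{\Br}_\circ$; taking unions produces density of $X(k)$ in $X(\A_k)^{\Br}_\circ$. For the converse, each nonempty $C_i(\A_k)^{\Br}_\circ$ is contained in $\overline{X(k)} \subset \overline{A(k)}$, so Lemma~\ref{lem:T} places it inside $\overline{C_i(k)}$; this forces $C_i(k) \ne \emptyset$ and hence $C_i \simeq A_i$, and the resulting equality $\overline{A_i(k)} = A_i(\A_k)^{\Br}_\bullet$ then yields $\Sha(A_i)_\textup{div}=0$ by Theorem~\ref{thm:notranscendental}(3). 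The hardest part is this converse direction: one must argue that an adelic Brauer point on a coset without a rational point cannot lie in $\overline{X(k)}$, and it is precisely Lemma~\ref{lem:T} (which ultimately rests on Proposition~\ref{lem:infords}) that closes this gap.
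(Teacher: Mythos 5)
Your proposal is correct, and for part (1), the construction of the coset decomposition, and the ``if'' direction of the density statement it follows the paper's proof essentially verbatim: adelic Mordell--Lang (Theorem~\ref{thm:AML}, or Conjecture~\ref{conj:AML}) to land in $Y(\A_k)$, then Lemma~\ref{lem:Yi} plus Lemma~\ref{lem:T} (resp.\ Lemma~\ref{lem:BrSel}) plus functoriality, with the pruning of components having empty Brauer set. Where you genuinely diverge is the converse (``only if'') direction. The paper argues: given $x\in C_i(\A_k)^{\Br}\subset X(\A_k)^{\Br}=\overline{X(k)}$, it invokes the Mordell--Lang theorem to write $X(k)=\bigcup C_j(k)$, places $x$ in $\overline{A_j(k)}$ for a translated component $A_j$, forms $Y=C_i\cap A_j$, and then applies Lemma~\ref{lem:Y'} together with part (1) of the theorem (for the finite union of cosets $Y'$, where the conjecture holds trivially) to conclude $x\in\overline{C_i(k)}$. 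You instead observe that $C_i(\A_k)^{\Br}_\circ\subset X(\A_k)^{\Br}_\circ=\overline{X(k)}\subset\overline{A(k)}$ and apply Lemma~\ref{lem:T} directly to $C_i$, getting $C_i(\A_k)^{\Br}_\circ\subset C_i(\A_k)_\circ\cap\overline{A(k)}=\overline{C_i(k)}$; nonemptiness then forces $C_i(k)\neq\emptyset$ and density of $C_i(k)$ in $C_i(\A_k)^{\Br}_\circ$ (the reverse containment being automatic since rational points lie in the closed Brauer set), whence $\Sha(A_i)_\textup{div}=0$ by Theorem~\ref{thm:notranscendental}(3). This shortcut is valid: Lemma~\ref{lem:T} is unconditional in Section~\ref{sec:cosets} (no isotriviality or AML input), it applies to any coset over $k$ including those without rational points, and the passage between $\bullet$ and $\circ$ is harmless since the $C_i$ are geometrically connected and $k$ has no real primes. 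What you lose relative to the paper is only presentational: the paper's longer route re-derives, via Mordell--Lang and Lemma~\ref{lem:Y'}, the same conclusion for an arbitrary coset $C\subset X$ representing a divisible Tate--Shafarevich class, but your argument covers exactly those cosets too (they are precisely the ones with nonempty Brauer set), while avoiding any appeal to the rational-point Mordell--Lang theorem in this step.
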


\begin{Remark}
	In the theorem we must omit the complex primes, if we want to allow $X$ that are not geometrically connected. Here is an example with $X(k)$ not dense in $X(\A_k)^{\Br}_\bullet$. Suppose $k = \Q(i)$ and $X = P_1 \cup P_2 = \Spec(k \times k)$ is a pair of $k$-rational points. Let $(x_v) \in X(\A_k)$ be the adelic point with $x_v = P_1$ for all nonarchimedean primes and $x_v = P_2$ at the complex prime. Then $(x_v)$ and $P_1$ only differ at the complex prime. Since $P_1 \in X(k) \subset X(\A_k)^{\Br}$ and $\Br(\C) = 0$, we have that $(x_v) \in X(\A_k)^{\Br}$. But the image of $(x_v)$ in $X(\A_k)_\bullet$ is not contained in the image of $X(k)$.
\end{Remark}

\begin{proof}
	As in Section~\ref{sec:cosets}, we omit the subscript $\circ$ from the notation, using $X(\A_k)$ to also denote its image in $A(\A_k)_\circ$ and similarly for the other subsets of $A(\A_k)$ considered.
	
	Theorem~\ref{thm:AML} (or the assumed Conjecture~\ref{conj:AML} in the number field case) gives a finite union of cosets $Y = \bigcup C_j$, say $C_j = a_j + A_j$, contained in $X$ such that
	\begin{equation}\label{eqX}
		X(\A_k) \cap \overline{A(k)} \subset X(\A_k) \cap A(\A_k)^{\Br}  \subset Y(\A_k)\,.
	\end{equation}
	By Lemma~\ref{lem:Yi} we have that
	\[
		Y(\A_k) \cap \overline{A(k)} \subset Y(\A_k) \cap A(\A_k)^{\Br} \subset \bigcup_j C_j(\A_k)\,.
	\]
	Then by Lemma~\ref{lem:T} applied to each $C_j$ we have
	\[
		\overline{Y(k)} \subset Y(\A_k) \cap \overline{A(k)} \subset \bigcup_j \left(C_j(\A_k) \cap \overline{A(k)} \right) = \bigcup_j \overline{C_j(k)} \subset \overline{Y(k)}\,.
	\]
	Thus, we have $\overline{Y(k)} = Y(\A_k) \cap \overline{A(k)}$. Combining this with~\eqref{eqX} we get
	\[
		\overline{Y(k)} \subset \overline{X(k)} \subset X(\A_k) \cap \overline{A(k)} \subset Y(\A_k) \cap \overline{A(k)} = \overline{Y(k)}.
	\]
	This shows that $\overline{X(k)} = X(\A_k) \cap \overline{A(k)}$ proving the first statement in the theorem.
	
	For the second statement, using Lemma~\ref{lem:BrSel} in place of Lemma~\ref{lem:T} in the argument above, we get that
	\begin{equation}\label{eq:XC}
		X(\A_k)^{\Br} \subset X(\A_k) \cap A(\A_k)^{\Br} = \bigcup_j C_j(\A_k)^{\Br}.
	\end{equation}
	For each $j$ we also have $C_j(\A_k)^{\Br} \subset X(\A_k)^{\Br}$ by functoriality of the Brauer pairing~\eqref{eq:Br}. So $X(\A_k)^{\Br} = \bigcup C_j(\A_k)^{\Br}$. Note that this equality continues to hold if we take the union only over those $j$ such that $C_j(\A_k)^{\Br} \ne \emptyset$. 
	
	If $\Sha(A_j)_\textup{div}=0$ for all $j$, then by Theorem~\ref{thm:notranscendental} we have
	$$X(\A_k)^{\Br} = \bigcup_j C_j(\A_k)^{\Br} = \bigcup_j \overline{C_j(k)} \subset \overline{X(k)},$$
	which implies that $X(\A_k)^{\Br} = \overline{X(k)}$.
	
	For the converse suppose $\overline{X(k)} = X(\A_k)^{\Br}$. Let $C = a + A' \subset X$ be a coset such that $C$ represents an element of $\Sha(A')_{\textup{div}}$. By Theorem~\ref{thm:notranscendental} we have $C(\A_k)^{\Br} \ne \emptyset$. Let $x \in  C(\A_k)^{\Br} \subset X(\A_k)^{\Br}$. In particular, $X(\A_k)^{\Br}\ne\emptyset$, so $\overline{X(k)} \ne \emptyset$. By Mordell-Lang there is a finite union of cosets $\bigcup C_i \subset X$ such that $X(k) = \bigcup C_i(k)$, and so there is some $C_i$ such that $x \in \overline{C_i(k)}$. In particular, $C_i(k)\ne \emptyset$. Translating by a $k$-rational point we can assume $C_i = A_i$ is an abelian subvariety. Then $Y := C \cap A_i$ is geometrically a finite union of cosets contained in the abelian variety $A_i$ and $x \in Y(\A_k) \cap \overline{A_i(k)}$. By Lemma~\ref{lem:Y'}, there is a finite union of cosets $Y' \subset Y$ such that $Y(\A_k) \cap \overline{A_i(k)} = Y'(\A_k) \cap \overline{A_i(k)}$. Note that Conjecture~\ref{conj:AML} holds for $Y' \subset A$, since $Y'$ is already a finite union of cosets. By the first statement of the theorem (which we have already proved) we get that $Y'(\A_k) \cap \overline{A_i(k)} = \overline{Y'(k)} \subset \overline{C(k)}$. We conclude that $x \in \overline{C(k)}$. Since $x$ was arbitrary this shows that $C(\A_k)^{\Br} = \overline{C(k)}$. By Theorem~\ref{thm:notranscendental}  this implies that $\Sha(A')_\textup{div} = 0$.
	\end{proof}
	
\section*{References}

\begin{biblist}

\bib{AV}{article}{
author={Abramovich, Dan}, 
author={Voloch, Jos\'e Felipe},
title={Toward a proof of the Mordell-Lang conjecture in characteristic $p$}, 
journal={International Math. Research notices}, 
volume={5},
date={1992},
pages={103-115},
}

\bib{BGW}{article}{
  author={Bhargava, Manjul},
  author={Gross, Benedict H.},
  author={Wang, Xiaoheng},
  title={A positive proportion of locally soluble hyperelliptic curves over $\mathbb{Q}$ have no point over any odd degree extension},
  note={With an appendix by Tim and Vladimir Dokchitser},
  journal={J. Amer. Math. Soc.},
  volume={30},
  date={2017},
  number={4},
  pages={1105--1152},
  issn={0894-0347},
}

\bib{BruinStoll}{article}{
  author={Bruin, Nils},
  author={Stoll, Michael},
  title={Two-cover descent on hyperelliptic curves},
  journal={Math. Comp.},
  volume={78},
  date={2009},
  number={268},
  pages={2347--2370},
}

\bib{Cesnavicius}{article}{
   author={\v Cesnavi\v cius, K{\k e}stutis},
   title={Poitou-Tate without restrictions on the order},
   journal={Math. Res. Lett.},
   volume={22},
   date={2015},
   number={6},
   pages={1621--1666},
   issn={1073-2780},
}

\bib{CreutzBCyr}{article}{
author={Creutz, Brendan},
title={There are no transcendental Brauer--Manin obstructions on abelian varieties},
journal={International Mathematics Research Notices},
date={2020}
}

\bib{CreutzWA}{article}{
  author={Creutz, Brendan},
  title={Weak approximation versus the Hasse principle for subvarieties of abelian varieties},
  journal={Mathematische Zeitschrift},
  volume={306},
  number={1},
  pages={Article 11, 19 pages},
  year={2024},
}


\bib{CPV}{article}{
  title={Galois invariants of finite abelian descent and Brauer sets},
  author={Creutz, Brendan},
  author={Pajwani, Jesse}
  author={Voloch, Jos\'e Felipe},
  journal={Bull. Lond. Math. Soc.},
  volume={56},
  date={2024},
  number={4},
  pages={1588--1606},
}

\bib{CV}{article}{
   author={Creutz, Brendan},
   author={Voloch, Jos\'{e} Felipe},
   title={The Brauer-Manin obstruction for constant curves over global
   function fields},
   language={English, with English and French summaries},
   journal={Ann. Inst. Fourier (Grenoble)},
   volume={72},
   date={2022},
   number={1},
   pages={43--58},
   issn={0373-0956},
}

\bib{CV3}{article}{
   author={Creutz, Brendan},
   author={Voloch, Jos\'{e} Felipe},
   title={The Brauer-Manin obstruction for nonisotrivial curves over global function fields},
   eprint={arXiv:2308.13075}
   journal={Algebra Number Theory (to appear)}
   date={2025}
}

\bib{DAddezio}{article}{
author={D'Addezio, Marco},
title={Boundedness of the p-primary torsion of the Brauer group of an abelian variety}
journal={Compos. Math.}
Volume={160}
date={2024}
pages={463-480}
}

\bib{Demazure}{book}{
  author={Demazure, M.},
  title={Lectures on p-divisible groups},
  series={Lecture Notes in Mathematics},
  volume={302},
  publisher={Springer-Verlag},
  place={Berlin-New York},
  date={1972},
  pages={iv+98},
  isbn={3-540-06092-8},
}

\bib{Faltings}{article}{
  author={Faltings, Gerd},
  title={Endlichkeitssätze für abelsche Varietäten über Zahlkörpern},
  journal={Inventiones Mathematicae},
  volume={73},
  date={1983},
  number={3},
  pages={349--366},
  issn={0020-9910},
}

\bib{Flynn}{article}{
  author={Flynn, E. V.},
  title={The Hasse principle and the Brauer-Manin obstruction for curves},
  journal={Manuscripta Math.},
  volume={115},
  date={2004},
  number={4},
  pages={437--466},
}

\bib{GA-THassePrinciple}{article}{
  author={Gonz{\'a}lez-Avil{\'e}s, Cristian D.},
  author={Tan, Ki-Seng},
  title={On the Hasse principle for finite group schemes over global function fields},
	   journal={Math. Res. Lett.},
	   volume={19},
	   date={2012},
	   number={2},
	   pages={453--460},
	   issn={1073-2780},
}

\bib{Greenberg}{article}{
author={Greenberg, M. J.},
title={Rational points in Henselian discrete valuation rings},
journal={Inst. Hautes Études Sci. Publ. Math.},
volume={31},
date={1966},
pages={59-64}
}

\bib{Hrushovski}{article}{
   author={Hrushovski, Ehud},
   title={The Mordell-Lang conjecture for function fields},
   journal={J. Amer. Math. Soc.},
   volume={9},
   date={1996},
   number={3},
   pages={667--690},
   issn={0894-0347},
}

\bib{Illusie}{article}{
  author={Illusie, L.},
  title={Complexe de De Rham-Witt et cohomologie cristalline},
  journal={Ann. Sci. \'Ec. Norm. Sup\'er. (4)},
  volume={12},
  number={4},
  pages={501--661},
  year={1979},
  issn={0012-9593},
  }

\bib{Liu}{article}{
  author={Liu, Qing},
  title={Subgroups of semiabelian varieties},
  eprint={https://www.math.u-bordeaux.fr/~qliu/notes/sub-abelian_varieties.pdf}
  note={Unpublished note}
}

\bib{Manin}{article}{
   author={Manin, Yuri I.},
   title={Le groupe de Brauer-Grothendieck en g\'eom\'etrie 
			diophantienne},
   conference={
      title={Actes du Congr\`es International des 
		Math\'ematiciens},
      address={Nice},
      date={1970},
   },
   book={
      publisher={Gauthier-Villars},
      place={Paris},
   },
   date={1971},
   pages={401--411},
}	

\bib{MilneCongruence}{article}{
author={Milne, J.S.},
title={Congruence subgroups of abelian varieties},
journal={Bull. Sci. Math.},
volume={96},
date={1972},
pages={333-338}
}
%
\bib{MilneADT}{book}{
   author={Milne, J. S.},
   title={Arithmetic duality theorems},
   series={Contemporary Mathematics},
   volume={103},
   note={Second edition (free version), corrections added},
   publisher={BookSurge, LLC, Charleston, SC},
   date={2006},
   pages={viii+347},
   isbn={978-1-4196-4274-6},
}

\bib{CoNF}{book}{
 author={Neukirch, J{\"u}rgen},
 author={Schmidt, Alexander},
 author={Wingberg, Kay},
 isbn={978-3-540-37888-4},
 issn={0072-7830},
 issn={2196-9701},
 book={
 title={Cohomology of number fields},
 publisher={Berlin: Springer},
 },
 title={Cohomology of number fields},
 edition={2nd ed.},
 series={Grundlehren der Mathematischen Wissenschaften},
 volume={323},
 pages={xv + 825},
 date={2008},
 publisher={Springer, Cham},
}

\bib{PinkRossler}{article}{
author={R\"ossler, Damian},
author={Pink, Richard},
title={On $\psi$-invariant subvarieties of semiabelian varieties and the Manin-Mumford conjecture},
journal={J. Algebr. Goem.},
volume={13},
number={4},
page={771-798},
date={2004}
}

\bib{Poonen}{article}{
   author={Poonen, Bjorn},
   title={Heuristics for the Brauer-Manin obstruction for curves},
   journal={Experiment. Math.},
   volume={15},
   date={2006},
   number={4},
   pages={415--420},
   issn={1058-6458}
}

\bib{PoonenVoloch}{article}{
   author={Poonen, Bjorn},
   author={Voloch, Jos\'e Felipe},
   title={The Brauer-Manin obstruction for subvarieties of abelian varieties
   over function fields},
   journal={Ann. of Math. (2)},
   volume={171},
   date={2010},
   number={1},
   pages={511--532},
   issn={0003-486X},
}

\bib{RosslerML}{article}{
   author={R\"ossler, Damian},
   title={On the Manin-Mumford and Mordell-Lang conjectures in positive
   characteristic},
   journal={Algebra number Theory},
   volume={7},
   date={2013},
   number={8},
   pages={2039--2057},
   issn={1937-0652},
}

\bib{Scharaschkin}{book}{
   author={Scharaschkin, Victor},
   title={Local-global problems and the Brauer-Manin obstruction},
   note={Thesis (Ph.D.)--University of Michigan},
   publisher={ProQuest LLC, Ann Arbor, MI},
   date={1999},
   pages={59},
   isbn={978-0599-63464-0},
}

\bib{SerreII}{article}{
 author={Serre, Jean-Pierre},
 issn={0025-5726},
 language={French},
 title={Sur les groupes de congruence des vari{\'e}t{\'e}s abeliennes. II},
 journal={Mathematics of the USSR. Izvestiya},
 volume={5},
 pages={747--753},
 date={1972},
 publisher={American Mathematical Society (AMS), Providence, RI},
}

\bib{Skorobogatov}{book}{
   author={Skorobogatov, Alexei},
   title={Torsors and rational points},
   series={Cambridge Tracts in Mathematics},
   volume={144},
   publisher={Cambridge University Press, Cambridge},
   date={2001},
   pages={viii+187},
   isbn={0-521-80237-7},
}

\bib{Skorob}{article}{
author={Skorobogatov, Alexei},
title={Boundedness of the p-primary torsion of the Brauer group of products of varieties},
journal={Forum of Mathematics, Sigma},
date={2025},
Volume={13}
}

\bib{Stoll}{article}{
   author={Stoll, Michael},
   title={Finite descent obstructions and rational points on curves},
   journal={Algebra number Theory},
   volume={1},
   date={2007},
   number={4},
   pages={349--391},
   issn={1937-0652},
}

\bib{Voloch}{article}{
author={Voloch, Jos\'e Felipe},
   title={On the conjectures of Mordell and Lang in positive
   characteristics},
   journal={Invent. Math.},
   volume={104},
   date={1991},
   number={3},
   pages={643--646},
}

\bib{Wang}{article}{
   author={Wang, Lan},
   title={Brauer-Manin obstruction to weak approximation on abelian
   varieties},
   journal={Israel J. Math.},
   volume={94},
   date={1996},
   pages={189--200},
   issn={0021-2172},
}

\bib{Wisson}{article}{
author={Wisson, Thomas},
title={An algebraic proof of Hrushovski's theorem},
note={arXiv:2409.08370},
date={2024}
}

\bib{WissonThesis}{article}{
author={Wisson, Thomas},
title={Heights on algebraic varieties over function fields},
date={2025},
note={PhD Thesis},
publisher={University of Oxford},
}

\bib{Yang}{article}{
author={Yuan Yang},
title={Remarks on p-primary torsion of the Brauer group},
note={arXiv:2410.09969}
}

\bib{Zarhin}{article}{
author={Zarhin, Yuri},
title={Abelian varieties without homotheties},
journal={Math. Res. Lett.},
volume={14},
date={2007},
number={1},
pages={157-164}
}

\end{biblist}
\end{document}